\newlength{\arrowsize}
\newcommand{\bigarrowtipdef}{%
\pgfarrowsdeclare{biggertip}{biggertip}{
  \setlength{\arrowsize}{.4pt}
  \addtolength{\arrowsize}{.5\pgflinewidth}
  \pgfarrowsrightextend{0}
  \pgfarrowsleftextend{-5\arrowsize}
}{
  \setlength{\arrowsize}{0.4pt}
  \addtolength{\arrowsize}{.3\pgflinewidth}
  \pgfpathmoveto{\pgfpoint{-6\arrowsize}{5\arrowsize}}
  \pgfpathlineto{\pgfpointorigin}
  \pgfpathlineto{\pgfpoint{-6\arrowsize}{-5\arrowsize}}
  \pgfusepathqstroke
}%
}
\newcommand{\eg}{{e.g.}}
\newcommand{\ie}{{i.e.}\ }
\newcommand{\loccit}{{loc. cit.}}
\newcommand{\cO}{{\mathcal O}}
\newcommand{\cP}{{\mathcal P}}
\newcommand{\cV}{{\mathcal V}}
\newcommand{\bbA}{{\mathbb{A}}}
\newcommand{\bbN}{{\mathbb{N}}}
\newcommand{\bbP}{{\mathbb{P}}}
\newcommand{\bbZ}{{\mathbb{Z}}}
\newcommand{\half}{\frac{1}{2}}
\newcommand{\smallbullet}{{\scriptscriptstyle\bullet}}
\newcommand{\restr}[1]{_{|_{\scriptstyle #1}}}
\newcommand{\smallrestr}[1]{_{|_{#1}}}
\newcommand{\too}{\longrightarrow}
\newcommand{\tooby}[1]{\buildrel #1\over\longrightarrow}
\newcommand{\tooo}[1]{\mathop{\vcenter{\hbox to #1em{\hrulefill}}\kern-5pt\to}}
\newcommand{\ourfrac}[2]{\genfrac{}{}{0pt}{1}{#1}{#2}}
\newcommand{\Ourfrac}[2]{\genfrac{}{}{0pt}{0}{#1}{#2}}
\newcommand{\hook}{\hookrightarrow}
\newcommand{\oto}[1]{\overset{#1}\to}
\newcommand{\otoo}[1]{\overset{#1}\too}
\newcommand{\isoto}{\oto{\sim}}
\newcommand{\isotoo}{\otoo{\sim}}
\newcommand{\onto}{\mathop{\twoheadrightarrow}}
\newcommand{\into}{\mathop{\rightarrowtail}}
\DeclareMathOperator{\im}{\mathrm{im}} 
\DeclareMathOperator{\id}{\mathrm{id}} 
\DeclareMathOperator{\Wcoh}{W}
\DeclareMathOperator{\Wper}{W}
\newcommand{\Wtot}{\Wcoh^{\text{\rm tot}}}
\newcommand{\bord}{\partial} 
\DeclareMathOperator{\ext}{e} 
\newcommand{\Gm}{\mathbb{G}_{\text{\rm m}}}
\DeclareMathOperator{\Spec}{\mathrm{Spec}}    
\DeclareMathOperator{\Grass}{\mathrm{Gr}}    
\DeclareMathOperator{\Gal}{\mathbb{G}}    
\DeclareMathOperator{\rank}{\mathrm{rk}} 
\DeclareMathOperator{\Pic}{\mathrm{Pic}} 
\newcommand{\can}{\omega} 
\newcommand{\Bl}{B} 
\newcommand{\Exc}{E} 
\newcommand{\deframe}{$(d\!\times\!e)$-frame}
\newcommand{\multi}[1]{\underline{#1}} 
\newcommand{\row}{\rho}
\newcommand{\zero}{\zeta}
\newcommand{\inv}{^{-1}}
\newcommand{\rect}[2]{[#1\times #2]}
\newcommand{\genpt}{\delta} 
\newcommand{\vide}{{\slashed{\square}}} 
\newcommand{\twist}{t} 
\newcommand{\Twist}{T} 
\newcommand{\barres}{\bar\upsilon}
\newcommand{\barpush}{\bar\iota}
\newcommand{\barbord}{\bar\bord}
\newcommand{\Taut}{{\mathcal T}} 
\newcommand{\Flag}{{\mathcal Fl}} 
\newcommand{\Det}{\Delta} 
\newcommand{\gen}{\phi} 
\newcommand{\ff}{f} 
\newcommand{\iso}{\phi}
\newcommand{\subb}{\lhd}
\newcommand{\subbeq}{\subb}
\newcommand{\nsubb}{\dot{\ntriangleleft}}
\newcommand{\BS}{X} 
\newcommand{\CS}{Y} 
\newcommand{\lax}{\textrm{lax}}
\newcommand{\weq}{\mathop{\,\leftrightsquigarrow\,}} 
\newcommand{\SmPic}{{\mathcal S}} 
\newcommand{\dt}[1]{\mathchoice{\mathop{\cdot}\limits_{#1}}{\cdot_{#1}}{}{}} 
\newcommand{\alis}[1]{{#1}{}^{{\circlearrowleft}}}
\newcommand{\alto}{\leadsto}
\newcommand{\alKto}[1]{\,\ensuremath\raisebox{.5ex}{$\underset{#1}{\leadsto}$}\,}
\newcommand{\Set}{\mathcal{I}} 
\newcommand{\varS}{i} 
\newcommand{\FSet}{\mathcal{J}} 
\newcommand{\varF}{i} 
\newcommand{\SetZ}{\mathcal{I}} 
\newcommand{\varSZ}{i} 
\newcommand{\SetX}{\mathcal{J}} 
\newcommand{\varSX}{j} 
\newcommand{\SetU}{\mathcal{K}} 
\newcommand{\varSU}{k} 
\theoremstyle{plain}
\newtheorem{theo}{Theorem}[section]
\newtheorem*{maintheo*}{Main Theorem}
\newtheorem{lemm}[theo]{Lemma}
\newtheorem{coro}[theo]{Corollary}
\newtheorem{prop}[theo]{Proposition}
\theoremstyle{definition}
\newtheorem{defi}[theo]{Definition}
\newtheorem{rema}[theo]{Remark}
\newtheorem{exam}[theo]{Example}
\newtheorem{nota}[theo]{Notation}
\newtheorem{conv}[theo]{Convention}
\newtheorem*{conv*}{Convention}
\title{Witt groups of Grassmann varieties}
\author{Paul Balmer and Baptiste Calm{\`e}s}
\address{Paul Balmer, Department of Mathematics, UCLA, Los Angeles, CA 90095-1555, USA}
\urladdr{http://www.math.ucla.edu/\raise-.5ex\hbox{~}balmer}
\address{Baptiste Calm\`es, Universit\'e d'Artois, Laboratoire de Math\'ematiques de Lens, France}
\urladdr{http://www.math.uni-bielefeld.de/\raise-.5ex\hbox{~}bcalmes}
\thanks{The first author is supported by NSF grant DMS-0969644.}
\date{\today}
\keywords{Witt group, Grassmann variety, triangulated category, cellular}
\subjclass{19G99, 11E81, 18E30}
\begin{document}
\bibliographystyle{amsplain}

\begin{abstract}
We compute the Witt groups of split Grassmann varieties, over any
regular base~$X$. The answer is that the total Witt group of the
Grassmannian is a free module over the total Witt ring of~$X$. We
provide an explicit basis for this free module, which is indexed by
a special class of Young diagrams, that we call \emph{even} Young
diagrams.
\end{abstract}

\maketitle

\tableofcontents

\section*{Introduction}
At first glance, it might be surprising for the non-specialist that
more than thirty years after the definition of the Witt group of a
scheme, by Knebusch~\cite{Knebusch77b}, the Witt group of such a
well-known variety as a Grassmannian has not been computed yet. This
is especially striking since analogous results for ordinary
cohomologies, for $K$-theory and for Chow groups, have been settled
for even longer. The goal of this article is to solve this problem
and explain what made it so hard in the first place.

\begin{maintheo*}[See Thm.\,\ref{main_thm}]
Let~$X$ be a regular noetherian and separated scheme over
$\bbZ[\half]$, of finite Krull dimension.
Let $0 < d < n$ be integers and let $\Grass_X(d,n)$
be the Grassmannian of $d$-dimensional subbundles of the trivial
$n$-dimensional vector bundle $\cV=\cO_X^n$ over~$X$. (More generally, we treat
any vector bundle $\cV$ admitting a complete flag of subbundles.)

Then the
total Witt group of~$\Grass_X(d,n)$ is a free graded module over the
total Witt group of~$X$ with an explicit basis indexed by so-called
``even'' Young diagrams. The basis element corresponding to an even
Young diagram is essentially the push-forward of the unit along the
inclusion of the corresponding Schubert variety. The cardinal of
this basis equals $2\cdot\frac{\displaystyle(d'+e')!}{\displaystyle d'!\cdot e'!}$
where $d'=\big[\frac{\displaystyle d}{\displaystyle 2}\big]$ and $e'=\big[\frac{\displaystyle n-d}{\displaystyle 2}\big]$.
\end{maintheo*}

Before explaining the statement in more detail, recall that the
Grothendieck group, or the Chow group, of $\Grass_X(d,n)$ would also
be free over that of~$X$ but with a basis indexed by \emph{all}
Young diagrams. We shall explain below why only some Young diagrams
``make it to the Witt group''.

\smallbreak

The \emph{total} Witt group refers to the sum of the Witt groups
$\Wper^i(X,L)$
$$\Wtot(X) = \bigoplus_{\stackrel{\scriptstyle i \,\in\, \bbZ/4}{\scriptstyle [L] \,\in\, \Pic(X)/2}} \Wcoh^i(X,L)$$
for all possible shifts $i\in\bbZ/4$ and all possible twists
$[L]\in\Pic(X)/2$ in the duality. Details about this total Witt
group, including the dependency on choosing $L$ in its class
$[L]\in\Pic(X)/2$, can be found in Section~\ref{tot-form_sec}. For
this introduction, let us keep things simple\,: The total Witt group
of $X$ wraps up all Witt groups of~$X$, for all possible shifts~$i$
and all twists~$L$.

For $X=\Spec(F)$, the spectrum of a field, the total Witt group
boils down to the classical Witt group~$\Wper(F)$ but even in that
case the above Theorem is new and the total Witt group of
$\Grass_{F}(d,n)$ involves non-trivial shifted and twisted Witt
groups. The result has a very round form when stated for total Witt
groups but Knebusch's classical unshifted Witt groups
$\Wper^0(\Grass_{X}(d,n),L)$ can be isolated, as well as the
unshifted and untwisted Witt group
$\Wper(\Grass_{X}(d,n))=\Wper^0(\Grass_{X}(d,n),\cO)$. Indeed, the
announced basis consists of homogeneous elements and we describe
below how to read their explicit shifts~$i$ and twists~$L$ directly
on the corresponding Young diagram. For instance, it is worth noting
that there are no new interesting antisymmetric forms in the Witt
groups of $\Grass_X(d,n)$, that is, except for those extended
from~$X$, see Corollary~\ref{ranks_cor}.

To describe our basis explicitly, we need to introduce \emph{even}
Young diagrams. We first consider ordinary Young diagrams sitting in the
upper left corner of a rectangle with $d$ rows and $e$ columns,
which we call the \emph{frame} of the diagram. See
Figure~\ref{framed_fig}.

\begin{figure}[!ht]
\begin{center}
\begin{tikzpicture}[y=-1cm]

\path[draw=black,thick,fill=black!50] (5.6,3.4) -- (6.4,3.4) -- (6.4,2.6) -- (7.6,2.6) -- (7.6,1.4) -- (8.4,1.4) -- (8.4,1) -- (5.6,1);

\draw[thick,black] (5.6,1) rectangle (8.8,3.8);
\draw[thick,arrows=stealth-stealth,black] (5.4,1) -- (5.4,3.8);
\draw[thick,arrows=stealth-stealth,black] (8.8,0.8) -- (5.6,0.8);
\path (5,2.5) node[text=black,anchor=base west] {$d$};
\path (7.1,0.7) node[text=black,anchor=base west] {$e$};
\path (6.4,1.9) node[text=black,anchor=base west] {$\Lambda$};

\end{tikzpicture}
\end{center}
\caption{Young diagram $\Lambda$ in \deframe}
\label{framed_fig}%
\end{figure}

We say that such a framed Young diagram~$\Lambda$ is \emph{even} if
all the segments of the boundary of~$\Lambda$ which are strictly
inside the frame have even length. That is, we allow $\Lambda$ to
have odd-length segments on its boundary \emph{only where it touches
the outside frame}. See Figure~\ref{even_fig} for examples. (In
Figures~\ref{G44_fig}, \ref{G45_fig} and~\ref{G55_fig} we further
give all even diagram in \deframe\ for $(d,e)=(4,4)$, $(4,5)$ and
$(5,5)$, respectively.)

\begin{figure}[!ht]
\begin{center}
\begin{tikzpicture}[y=-1cm]

\path[draw=black,thick,fill=black!50] (5.4,3.4) -- (6.2,3.4) -- (6.2,2.6) -- (7,2.6) -- (7,1) -- (5.4,1);
\path[draw=black,thick,fill=black!50] (8.4,3) -- (10,3) -- (10,2.2) -- (10.8,2.2) -- (10.8,1) -- (8.4,1);
\path[draw=black,thick,fill=black!50] (3.2,3.4) -- (3.6,3.4) -- (3.6,2.6) -- (4.4,2.6) -- (4.4,1) -- (3.2,1);

\draw[thick,arrows=stealth-stealth,black] (7,2.5) -- (6.2,2.5);
\draw[thick,arrows=stealth-stealth,black] (6.2,3.3) -- (5.4,3.3);
\draw[thick,arrows=stealth-stealth,black] (6.3,2.6) -- (6.3,3.4);
\draw[thick,arrows=stealth-stealth,black] (7.1,1) -- (7.1,2.6);
\draw[thick,black] (5.4,1) rectangle (7.8,3.8);
\path (7.1,2) node[text=black,anchor=base west] {$4$};
\path (6.3,3.1) node[text=black,anchor=base west] {$2$};
\path (5.6,3.2) node[text=black,anchor=base west] {$2$};
\path (6.4,2.4) node[text=black,anchor=base west] {$2$};
\path (6.3,3.1) node[text=black,anchor=base west] {$2$};
\draw[thick,black] (8.4,1) rectangle (10.8,3.4);
\draw[thick,arrows=stealth-stealth,black] (10.8,2.1) -- (10,2.1);
\draw[thick,arrows=stealth-stealth,black] (10.1,2.2) -- (10.1,3);
\draw[thick,arrows=stealth-stealth,black] (10,2.9) -- (8.4,2.9);
\path (10.2,2) node[text=black,anchor=base west] {$2$};
\path (10.1,2.7) node[text=black,anchor=base west] {$2$};
\path (9.1,2.8) node[text=black,anchor=base west] {$4$};
\path[draw=black,thick,fill=black!50] (11.4,1) rectangle (13,3.8);
\draw[thick,black] (1,1) rectangle (2.6,3.8);
\draw[thick,arrows=stealth-stealth,black] (4.5,1) -- (4.5,2.6);
\draw[thick,arrows=stealth-stealth,black] (3.7,2.6) -- (3.7,3.4);
\draw[thick,arrows=stealth-stealth,black] (4.4,2.5) -- (3.6,2.5);
\draw[thick,black] (3.2,1) rectangle (4.8,3.4);
\path (3.7,3.1) node[text=black,anchor=base west] {$2$};
\path (3.8,2.4) node[text=black,anchor=base west] {$2$};
\path (4.45,2) node[text=black,anchor=base west] {$4$};

\end{tikzpicture}%
\end{center}
\caption{Five examples of even Young diagrams}
\label{even_fig}%
\end{figure}
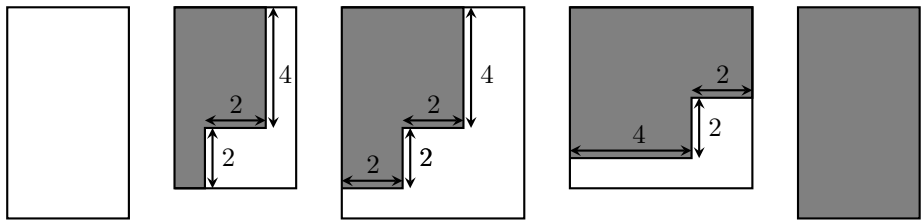

We shall see that basis elements of $\Wtot\big(\Grass_X(d,n)\big)$
are in bijective correspondence with even Young diagrams in
\deframe, for $e:=n-d$. Moreover, as explained in
Section~\ref{gen-Witt_sec}, the Witt class $\gen_{d,e}(\Lambda)$
corresponding to such a diagram $\Lambda$ lives in the Witt group
$\Wper^i\big(\Grass_X(d,n),L\big)$ for the shift
$i=|\Lambda|\in\bbZ/4$ equal to the \emph{area} of the diagram and
for the twist $[L]\in\Pic(\Grass_X(d,n))/2$ equal to the class of
$\twist(\Lambda)\cdot\Det$ where $\Det$ is the determinant of the
tautological bundle and where the integer $\twist(\Lambda)$ is half
the \emph{perimeter} of the diagram~$\Lambda$ (see
Figure~\ref{twist_fig} in Section~\ref{gen-Witt_sec}). More
generally, when $\cV$ is not free but admits a complete flag of
subbundles, the twist of $\gen_{d,e}(\Lambda)$ also involves a
multiple of the determinant of~$\cV$, in the direct summand of
$\Pic(\Grass_X(d,n))/2$ coming from $\Pic(X)/2$.

\smallbreak

Let us put our result in perspective. In its modern form, see for
instance Laksov~\cite{Laksov72}, the computation of the cohomology
(or $K$-theory, or Chow groups) of a cellular variety uses
essentially only localization long exact sequences and homotopy
invariance, applied to the classical cellular decomposition of the
Grassmannian. It took some time for Witt groups to reach the
necessary cohomological maturity. Indeed, the localization long
exact sequence could only be established by defining first ``higher"
or ``shifted" Witt groups, as was done in~\cite{Balmer00}
and~\cite{Balmer01} by the first author, using the framework
of triangulated categories. Then, homotopy invariance was proved by
Gille~\cite{Gille03b}. To be more precise, one actually also needs
some form of \emph{d\'evissage}, that allows us to compare the
theory with supports on a closed subset~$Z$ with the theory of~$Z$
itself. This piece of theory was a hard nut to crack because
dualities came in the way but is now well understood. (We return to
these dualities below.)

This construction of the cohomological machines accounts for most of
the delay that Witt groups accumulated in comparison to sister
theories. However, this is not the end of the story. What makes the
computation of the Witt groups of $\Grass_X(d,n)$ harder
than that of classical theories is the following very interesting
phenomenon. The classical computation proceeds by induction, using
the closed embedding of a smaller Grassmannian $\Grass_X(d,n-1)$
inside $\Grass_X(d,n)$, whose open complement
$U=\Grass_X(d,n)\smallsetminus \Grass_X(d,n-1)$ is an affine bundle
over another smaller Grassmannian, $\Grass_X(d-1,n)$. (See more in
Section~\ref{cell-dec_sec}.) Now the true miracle in the classical
proof is that the restriction homomorphism, from the big
Grassmannian $\Grass_X(d,n)$ to the open~$U$, is \emph{split
surjective}. This holds more precisely for any \emph{oriented}
cohomology theory in the sense of Levine-Morel~\cite{Levine07} or
Panin~\cite{Panin04}. In other words, there are no real localization
\emph{long} exact sequences involved in the classical proof, no real
connecting homomorphisms, just split short exact sequences.

The interesting point is that this miracle ceases to happen for Witt
groups\,: For a general
$(i,[L])\in\bbZ/4\times\Pic(\Grass_X(d,n))/2$, the restriction
homomorphism
$$
\Wper^i(\Grass_X(d,n),L)\too\Wper^i(U,L)
$$
does not admit a section. Worse, it is not even surjective\,! That
is, the connecting homomorphism in the localization long exact
sequence is not zero in general. This makes the Witt group
computation not a mere technical adaptation of the classical methods
but a completely different story\,: One needs to \emph{compute} a
connecting homomorphism in geometric enough terms, in order to
follow what happens on the given varieties. This very last pocket of
Witt resistance has been cleared in the companion
article~\cite{Balmer09} and Grassmannians are the first known
examples where this phenomenon can be described explicitly.

\smallbreak

Let us now comment on the organization of the paper.
Sections~\ref{Grass_sec} and~\ref{Young_sec} contain preparatory
material on Grassmann varieties, desingularizations of Schubert varieties and even Young diagrams.

Section~\ref{tot-form_sec} formalizes the use of total Witt groups,
since the group $\Wcoh^*(X,L)$ does not truly depend only on the
class $[L]\in\Pic(X)/2$ but really on the representative~$L$ in this
class. This is an old problem, rooting back to
Knebusch~\cite{Knebusch77b}, to which we proposed a solution
in~\cite{Balmer11}. We recall this formalism in
Section~\ref{tot-form_sec} with emphasis on the case of
Grassmannians. Until then, this Introduction should therefore be
read \textsl{cum grano salis}.

Our generators of $\Wtot\big(\Grass_X(d,n)\big)$ are defined in
Section~\ref{gen-Witt_sec} as push-forwards of the unit forms of
certain desingularized Schubert varieties. The reader should observe
that pushing the unit form is not always possible, due to the
presence of line bundles in the definition of the push-forward.
Indeed, for a proper morphism $f:\bar Y \to Y$ of constant relative
dimension $\dim(f)$, between regular noetherian schemes $\bar Y$ and
$Y$ (think $\dim f=\dim \bar Y-\dim Y$), the push-forward
along~$f$ is defined between the following Witt groups\,:
\begin{equation}
\label{push-forward_eq}%
\Wcoh^{i+\dim f}(\bar Y,\can_{f} \otimes f^*L) \overset{f_*}\too
\Wcoh^{i}(Y,L)\,,
\end{equation}
where the special line bundle $\can_f$ on the left is the
\emph{relative line bundle}. So, if unfortunately $\cO_{\bar Y}$ is
not (up to squares) of the form $\can_{f} \otimes f^*L$ for any line
bundle $L$ over~$Y$, then one simply \emph{cannot} push-forward the
unit form of~$\bar Y$, which lives in~$\Wcoh^0(\bar Y,\cO_{\bar
Y})$. This is why we start Section~\ref{gen-Witt_sec} by discussing
the ``parity'' of the relevant canonical bundles~$\can_{f}$.
Although somewhat heavy, these computations are elementary and are
all based on a repeated application of the computation of the
relative canonical bundle of a Grassmann bundle
(Prop.\,\ref{rel-can-Grass_prop}). The condition for a Young diagram
to be even implies the existence of such a push-forward for the unit
of the desingularized Schubert cell into the Grassmannian. Actually,
we could push-forward the unit form for more Schubert cells but
these additional generators would be redundant. The even Young
diagrams are chosen so that the corresponding forms are also
linearly independent.

Then, in Section~\ref{cell-dec_sec}, we recall the classical
relative cellular structure of the Grassmann varieties. In
Section~\ref{main_sec}, we compute how our candidate-generators
behave under the morphisms in the long exact sequence, especially
under the connecting homomorphism, which is most of the time not
zero (Cor.\,\ref{bord-non-zero_coro}). The proof of the main theorem
(Thm.\,\ref{main_thm}) then follows by induction on the rank of the
vector bundle~$\cV$. The last section contains corollaries and
examples.

\smallbreak

This article is written in the language of
``functors of points", which means that we describe schemes in terms
of their points (which are here flags) and morphisms of schemes as
how they act on those points. This method is completely rigorous in this
case. The original source is~\cite{SGA3} and
we also refer the reader to~\cite[\S\,I.1]{DemazureGabriel70} and \cite[Part 2]{Karpenko00b} for general
considerations on this subject. This language is customary when
dealing with flag varieties, see for instance~\cite{Laksov72} in
which it is used for the computation of Chow groups of Grassmann
varieties.

\section{Combinatorics of Grassmann and flag varieties}
\label{Grass_sec}%


We recall elementary facts about Grassmann varieties and desingularizations of Schubert cells. We also provide the necessary
material about canonical bundles to treat the push-forward
homomorphisms for Witt groups in Section~\ref{gen-Witt_sec}.

\begin{defi}\label{subb_def}%
A \emph{subbundle} $\cP\subb \cV$ of a vector
bundle $\cV$ over a scheme~$X$ is an $\cO_X$-submodule which is
locally a direct summand, \ie $\cP$ and $\cV/\cP$ are vector bundles.
\end{defi}

\begin{defi}
\label{Grass_def}%
Let $\cV$ be a vector bundle of rank $n>0$ over a
scheme~$X$ and let $d$ be an integer $0\leqslant d\leqslant n$. We
denote by $\Grass_X (d,\cV)$ the Grassmann bundle over~$X$
parameterizing the subbundles of rank~$d$ of~$\cV$. In the language
of functors of points, it means that for any morphism $f:\Spec(R)\to
X$, the set $\Grass_X(d,\cV)(R)$ consists of the $R$-submodules
$P\subb \cV(R)=f^*(\cV)$ which are direct summands of rank~$d$.

The scheme $\Grass_X(d,\cV)$ comes equipped with a smooth structural
morphism $\pi:\Grass_X(d,\cV) \to X$ and a tautological bundle
$\Taut_d=\Taut_d^{\Grass_X(d,\cV)}$ of rank~$d$.
\end{defi}

\begin{prop}
\label{Pic-Grass_prop}%
The scheme $\Grass_X(d,\cV)$ is smooth over~$X$ of relative
dimension $d(n-d)$. For $0<d<n$, the Picard group of
$\Grass_X(d,\cV)$ is given by
$$
\begin{array}{ccc}
\Pic(X) \oplus \bbZ & \cong & \Pic(\Grass_X(d,\cV))
\\
(\ell,m) & \mapsto & \pi^*(\ell)\cdot [\det(\Taut_d)]^m.
\end{array}
$$
In case $d=0$ or $d=n$, the morphism $\pi:\Grass_X(d,\cV)\to X$ is
the identity.
\end{prop}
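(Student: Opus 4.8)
The plan is to treat the three assertions of Proposition~\ref{Pic-Grass_prop} in order: smoothness and relative dimension, the computation of the Picard group for $0<d<n$, and the trivial cases $d=0$ and $d=n$. For the first assertion, I would recall the standard local description of the Grassmann bundle: Zariski-locally on~$X$ we may trivialize~$\cV$, and then $\Grass_X(d,\cV)$ is locally isomorphic to $\bbA^{d(n-d)}_X$ via the usual chart covering $\Grass(d,n)$ by big cells (the open subschemes where the tautological subbundle projects isomorphically onto a fixed coordinate $d$-plane). These charts are given explicitly on points by sending a $d\times(n-d)$ matrix to the graph of the corresponding linear map, and the transition maps are polynomial, so $\pi$ is smooth of relative dimension $d(n-d)$. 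Alternatively one can cite the representability and smoothness of the Grassmann functor from \cite{DemazureGabriel70}; since the paper works in the functor-of-points language, I would lean on that and just point to the explicit affine charts for the dimension count.

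For the Picard group, the essential input is homotopy invariance of the Picard group together with the projective bundle case. The cleanest route: reduce to the case where $X$ is affine and regular (the statement is local in nature, and regularity is in force by the standing Convention), fiber the Grassmann bundle, or rather build it up, using the flag-variety tower — but the shortest argument is to use that $\Grass_X(d,\cV)$ for $d=1$ is the projective bundle $\bbP(\cV)$, whose Picard group is $\Pic(X)\oplus\bbZ\cdot\cO(1)$ by the classical formula (and $\cO(1)=\Det_1^{\vee}$ up to sign, matching the tautological-bundle normalization here), and then for general $d$ to use the projection $\Flag \to \Grass_X(d,\cV)$ from a suitable partial flag bundle which is itself a tower of projective bundles, hence has known Picard group, and to identify $\Det_d$ inside it. Concretely: the incidence variety parametrizing a line inside the rank-$d$ tautological subbundle is $\bbP(\Taut_d)$ over $\Grass_X(d,\cV)$ and also fibers over $\Grass_X(1,\cV)=\bbP(\cV)$; comparing the two projective-bundle formulas pins down the rank of the free part and shows the displayed map is an isomorphism. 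The injectivity of $(\ell,m)\mapsto \pi^*\ell + m\Det_d$ follows because restricting to a fiber $\Grass(d,n)$ over a point kills $\pi^*\ell$ and detects $m$ (as $\Pic(\Grass(d,n))=\bbZ\cdot\Det_d$ over a field), while a section of $\pi$ — or just the fiber inclusion of~$X$ into a sub-Grassmann-bundle — splits off $\Pic(X)$.

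Finally, for $d=0$ the functor sends $R$ to the single $R$-submodule $0\subb\cV(R)$, and for $d=n$ to the single submodule $\cV(R)$ itself, so in both cases $\Grass_X(d,\cV)$ represents the same functor as~$X$ and $\pi$ is the identity; this is immediate and needs no argument beyond unwinding Definition~\ref{Grass_def}. The main obstacle is the Picard-group computation, and specifically getting the \emph{rank} of the free part right and the normalization of the generator consistent with the paper's convention $\Det_d = \det\Taut_d$: one must be careful that the projective-bundle formula is applied with the correct sign convention ($\Pic(\bbP(\cV))$ generated by $\cO(1)$ versus by the tautological $\cO(-1)=\Taut_1$), and that homotopy invariance $\Pic(\bbA^N_X)\cong\Pic(X)$ is invoked only over a regular base, which is exactly where the standing hypotheses on~$X$ are used. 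Everything else is bookkeeping on the explicit charts.
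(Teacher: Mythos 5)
Your proposal is correct in substance but takes a genuinely different route from the paper. The paper's own proof is essentially a one-line citation: since $X$ is regular (by the standing convention), $\Pic$ coincides with $CH^1$, and the Chow groups of Grassmann bundles were computed by Laksov \cite{Laksov72} --- Theorem~16 there for the field case, \S\,13 for the relative case over a regular base --- with the generator identified as $\Det_d$ via the Pl\"ucker embedding. You, by contrast, reconstruct the answer from the projective-bundle formula by an induction up the flag-variety tower: starting from $\Grass_X(1,\cV)=\bbP(\cV)$ and using the incidence variety $\bbP(\Taut_d)$, which is simultaneously a $\bbP^{d-1}$-bundle over $\Grass_X(d,\cV)$ and a rank-$(d-1)$ Grassmann bundle over $\bbP(\cV)$. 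Both approaches are valid; the paper's is shorter because it outsources the work to Laksov, while yours is more self-contained but would need the induction spelled out carefully (on $n$, say, with the bundle $\cV/\Taut_1$ of rank $n-1$ over $\bbP(\cV)$ appearing in the inductive step) and the generator-matching made explicit, since ``comparing the two projective-bundle formulas pins down the rank of the free part'' is not quite enough --- one must track the actual map and show it is an isomorphism, not merely that two abstract groups agree. One small imprecision to flag: a global section of $\pi:\Grass_X(d,\cV)\to X$ need not exist for a general $\cV$; the cleaner justification for the injectivity of $\pi^*$ on $\Pic$ is that $\pi_*\cO_{\Grass}\cong\cO_X$ (proper morphism with geometrically connected and reduced fibers), so $\pi^*L\cong\cO$ forces $L\cong\pi_*\pi^*L\cong\cO_X$ by the projection formula. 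Your care about the $\cO(1)$ versus $\Taut_1$ sign normalization is warranted and matches the paper's convention $\Det_d=\det\Taut_d$.
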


\begin{proof}
The Picard group of a regular scheme coincides with its Chow group
$CH^1$, which is computed in~\cite{Laksov72} for Grassmannians\,: see
Theorem~16 for the case where $X$ is a field, and §13 to work over a
regular base $X$. Using the Pl\"ucker embedding, one checks that the
generator in \loccit{} is indeed $[\det(\Taut_d)]$.
\end{proof}

Let $\Det_d$ denote the class of $\det(\Taut_d)$ in $\Pic\big(\Grass_X(d,\cV)\big)/2$.

\begin{coro}
\label{Pic-Grass_coro}%
If $0<d<n$, we have a natural identification
$$\Pic\big(\Grass_X(d,\cV)\big)/2\ \cong\ \Pic(X)/2\ \oplus\ \bbZ/2\cdot\Det_d.$$
\end{coro}

\begin{prop}
\label{rel-can-Grass_prop}%
The class of the relative canonical bundle
$\can_{\Grass_X(d,\cV)/X}$ of the projection $\pi:\Grass_X (d,\cV)
\to X$ is $[\can_{\Grass_X(d,\cV)/X}]=[\det \cV]^{-d}\cdot \Det_d^n$
in $\Pic(\Grass_X (d,\cV))$. In particular, if $\cV=\cO_X^n$ is
trivial, $[\can_{\Grass_X(d,\cV)/X}]=\Det_d^n$.
\end{prop}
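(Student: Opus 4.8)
The plan is to compute $\can_{\Grass_X(d,\cV)/X}$ from the relative tangent bundle of $\pi\colon\Grass_X(d,\cV)\to X$ and then take determinants. First I would recall the standard description of the relative tangent bundle of a Grassmann bundle: on $G:=\Grass_X(d,\cV)$, writing $\Taut=\Taut_d\subb\pi^*\cV$ for the tautological subbundle and $\mathcal Q:=\pi^*\cV/\Taut$ for the tautological quotient bundle (of rank $e:=n-d$), one has a canonical isomorphism
$$
T_{G/X}\ \cong\ \mathcal Hom(\Taut,\mathcal Q)\ =\ \Taut^\vee\otimes\mathcal Q .
$$
This is classical (it comes from deformation theory of the point $[\Taut\subb\pi^*\cV]$, or equivalently from the Euler-type exact sequence for Grassmann bundles), and over a point it recovers $T_{\Grass(d,n)}\cong\mathcal Hom(\Taut,\mathcal Q)$; I would cite this rather than reprove it. Taking ranks gives back the relative dimension $d(n-d)=de$ of Proposition \ref{Pic-Grass_prop}, as a sanity check.

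Next I would pass to determinants. By definition $\can_{G/X}=\det(T_{G/X})^\vee=-\det\mathcal Hom(\Taut,\mathcal Q)$ in $\Pic(G)$, written additively. Using the standard formula $\det(\mathcal E^\vee\otimes\mathcal F)=e\cdot(-\det\mathcal E)+d\cdot\det\mathcal F$ for $\mathcal E=\Taut$ of rank $d$ and $\mathcal F=\mathcal Q$ of rank $e=n-d$, I get
$$
\can_{G/X}\ =\ -\big(e\cdot(-\Det_d)+d\cdot\det\mathcal Q\big)\ =\ e\,\Det_d-d\det\mathcal Q .
$$
Then I invoke the short exact sequence $0\to\Taut\to\pi^*\cV\to\mathcal Q\to 0$, whose determinant gives $\det\mathcal Q=\pi^*\det\cV-\Det_d$ in $\Pic(G)$. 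Substituting,
$$
\can_{G/X}\ =\ e\,\Det_d-d\big(\pi^*\det\cV-\Det_d\big)\ =\ -d\,\pi^*\det\cV+(e+d)\,\Det_d\ =\ -d\det\cV+n\,\Det_d,
$$
abbreviating $\pi^*\det\cV$ as $\det\cV$ as in the statement. Since this lands in the free part $\Pic(X)\oplus\bbZ\cdot\Det_d$ identified in Proposition \ref{Pic-Grass_prop}, the expression is well defined and the computation is complete. The special case $\cV=\cO_X^n$ is immediate since then $\det\cV$ is trivial, so $\can_{G/X}=n\,\Det_d$.

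I do not expect a genuine obstacle here: the only non-formal input is the identification $T_{G/X}\cong\Taut^\vee\otimes\mathcal Q$, and even that is textbook material for Grassmann bundles, so the argument is essentially a bookkeeping exercise with determinants and the tautological exact sequence. The one point that deserves a word of care is the sign convention for the relative canonical bundle ($\can_{G/X}=\det\Omega_{G/X}=-\det T_{G/X}$) and keeping the two ranks $d$ and $e=n-d$ in the right places in the formula for $\det(\Taut^\vee\otimes\mathcal Q)$; getting either of these backwards would flip a sign or swap $d$ and $n$, so I would double-check against the case $d=1$, where $G=\bbP(\cV)$, $\Det_1=\cO(-1)$, and the formula should reproduce the familiar $\can_{\bbP(\cV)/X}=-\det\cV\,+\,n\,\cO(-1)=\cO(-n)\otimes\pi^*\det\cV^\vee$, i.e.\ the relative Euler sequence computation for projective bundles.
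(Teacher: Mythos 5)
Your proof is correct and follows essentially the same route as the paper: both rely on the textbook identification of the relative (co)tangent bundle of the Grassmann bundle as $\mathcal{H}\mathit{om}(\Taut,\mathcal{Q})$ (the paper cites Fulton, Appendix B.5.8, for the dual statement $\Omega_{G/X}\cong\Taut\otimes\mathcal{Q}^\vee$), then take determinants and use the tautological exact sequence $0\to\Taut\to\pi^*\cV\to\mathcal{Q}\to 0$. Your extra sanity checks (rank count, $d=1$ case) are harmless additions, and the bookkeeping with signs and the formula $\det(\mathcal{E}^\vee\otimes\mathcal{F})=-e\det\mathcal{E}+d\det\mathcal{F}$ is carried out correctly.
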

\begin{proof}
The morphism $\pi$ is smooth, so $\can_{\Grass_X(d,\cV)/X}$ is the determinant (highest exterior power) of the relative cotangent bundle of $\pi$. This cotangent bundle is the tautological bundle tensored by the dual of the tautological
quotient bundle (see~\cite[Appendix B.5.8]{Fulton98}). Taking the determinant, we get the result.
\end{proof}

\bigbreak \centerline{*\ *\ *}\goodbreak\bigbreak

We now extend the previous results from Grassmannians to some flag varieties.

\begin{defi}
\label{Flag_def}%
Let $k\geqslant1$ and $(\multi{d},\multi{e})$
be a pair of $k$-tuples of non-negative integers $\multi{d}=(d_1,\ldots,d_k)$ and
$\multi{e}=(e_1,\ldots,e_k)$ satisfying
\begin{equation}\label{pre-de_eq}%
0< d_1 < \cdots <d_k \qquad\text{and}\qquad
e_1+d_1\leqslant \cdots\leqslant e_k+d_k\,.
\end{equation}
(The second condition holds in particular if we have $e_1\leqslant\cdots\leqslant e_k$.) Consider a flag
\begin{equation}
\label{partial_eq}%
\cV_{d_1+e_1} \subb \cdots \subb \cV_{d_i+e_i} \subb \cdots \subb \cV_{d_k+e_k}
\end{equation}
of vector bundles over~$X$, where $\subb$ indicates subbundles in the strong sense of Definition~\ref{subb_def} and where the rank is given by the index: $\rank_X(\cV_r)=r$.

We associate to
this data the scheme $\Flag_{X}(\multi{d},\multi{e},\cV_{\smallbullet})$ over~$X$,
which parameterizes the flags of vector bundles $\cP_{d_1}\subb
\cP_{d_2}\subb\cdots\subb \cP_{d_k}$ such that $\rank \cP_{d_j}=d_j$
and $\cP_{d_j} \subb \cV_{d_j + e_j}$. As a functor of points, this gives for any morphism $f:Y\to X$
\begin{equation}\label{Flag_eq}%
\Flag_{X}(\multi{d},\multi{e},\cV_\smallbullet)(Y) := \left\{
\begin{array}{c}
\xymatrix@R=1.5ex@C=1.5ex{ 0 \ar@{}[r]|-{\subb} & \cP_{d_1}
\ar@{}[d]|{\rotatebox{270}{$\subbeq$}}^*!/^.5ex/{\labelstyle e_1}
\ar@{}[r]|{\subb} & \cP_{d_2}
\ar@{}[d]|{\rotatebox{270}{$\subbeq$}}^*!/^.5ex/{\labelstyle e_2}
\ar@{}[r]|{\subb} & \cdots \ar@{}[r]|{\subb} & \cP_{d_k}
\ar@{}[d]|{\rotatebox{270}{$\subbeq$}}^*!/^.5ex/{\labelstyle e_k}
\\
0 \ar@{}[r]|-{\subb} & f^*\cV_{d_1 + e_1} \ar@{}[r]|-{\subb}
& f^*\cV_{d_2 + e_2} \ar@{}[r]|-{\subb} & \cdots \ar@{}[r]|-{\subb}
& f^*\cV_{d_k + e_k} }
\end{array}
\right\}\,,
\end{equation}
where all $\cP_{d_i}$ are vector bundles over~$Y$ of rank~$d_i$ such that all inclusions are subbundles in the sense of Definition~\ref{subb_def}. The integers along inclusions indicate codimensions.
Following general practice, we shall drop the mention of $f^*$ in the sequel. Moreover, to avoid cumbersome notation, unless the original flag~\eqref{partial_eq} varies, we drop the mention of $\cV_\smallbullet$ from the notation\,: $\Flag_X(\multi{d},\multi{e})=\Flag_{X}(\multi{d},\multi{e},\cV_\smallbullet)$.
\end{defi}

\begin{exam}
\label{k=1_exa}%
For $k=1$, the scheme $\Flag_X(\multi{d},\multi{e})$ is simply $\Grass_X(d_1,\cV_{d_1+e_1})$.
\end{exam}

\begin{rema}
\label{morph_rem}%
For any choice $J$ of $k'$ indices among $\{1,\ldots,k\}$, one can consider the pair of $k'$-tuples $(\multi{d}',\multi{e}')$ obtained from $(\multi{d},\multi{e})$ by keeping $d_i$ and $e_i$ only for indices~$i\in J$. There is a natural morphism $\Flag_X(\multi{d},\multi{e})\to\Flag_X(\multi{d}',\multi{e}')$ over~$X$, obtained by dropping the $\cP_{d_j}$ for those indices~$j$ which are not in the chosen~$J$.

Furthermore, for any vector bundle $\cV$ such that $\cV_{d_k+e_k}\subb\cV$\,, there is a natural morphism $\ff_{\multi{d},\multi{e},\cV}$ of schemes over~$X$ as follows\,:
\begin{equation}
\label{pre-ff_eq}%
\begin{array}{rccccc}
\ff_{\multi{d},\multi{e},\cV}\,: &\Flag_X(\multi{d},\multi{e}) & \too &
\Grass_X(d,\cV_{d_k+e_k}) & \hookrightarrow & \Grass_X(d,\cV)
\\
&(\cP_{d_1},\ldots, \cP_{d_k}) & \longmapsto & \cP_{d_k} & \longmapsto & \cP_{d_k}\,,
\end{array}
\end{equation}
where the first morphism is as above and the second is a closed immersion.
\end{rema}

\begin{defi}
\label{tavtologic_def}%
The scheme $\Flag_{X}(\multi{d},\multi{e})$
is equipped with \emph{tautological bundles} $\Taut_{d_i}$, $1 \leqslant i
\leqslant k$, of rank $d_i$, whose determinant classes are denoted by
$\Det_{d_i}:=\det(\Taut_{d_i})$. The stalk of $\Taut_{d_i}$ at a point $(\cP_{d_1},
\ldots, \cP_{d_k})$ is $\cP_{d_i}$. In ambiguous cases, the full notation for
$\Taut_{d_i}$ would be $\Taut_{d_i}^{\Flag_X(\multi{d},\multi{e},\cV_\smallbullet)}$.
\end{defi}

\begin{rema}\label{zero-Det_rem}%
If $e_i=0$ then the vector bundles $\Taut_{d_i}=\cV_{d_i}$ and
$\Det_{d_i}=[\det\cV_{d_i}]$ are both extended from~$X$.
\end{rema}

\begin{lemm}
\label{rel_lem}%
Let $k\geqslant2$ and let $(\multi{d},\multi{e})$ be a pair of
$k$-tuples satisfying~\eqref{pre-de_eq}. Let $\cV_\smallbullet$ be a
flag as in~\eqref{partial_eq}. Define the $(k-1)$-tuples $\multi{d}\restr{k-1}$ and
$\multi{e}\restr{k-1}$ as the restrictions of $\multi{d}$ and
$\multi{e}$ to the first $k-1$ entries. Consider the scheme
$$
Y:=\Flag_X(\,\multi{d}\restr{k-1}\,,\,\multi{e}\restr{k-1}\,,\,\cV_\smallbullet\,)\,,
$$
which only ``uses'' the first $k-1$ bundles $\cV_{d_1+e_1}\subb\cdots\subb\cV_{d_{k-1}+e_{k-1}}$. Consider the pull-back to~$Y$ of the remaining bundle, still denoted $\cV_{d_{k}+e_{k}}$\,. Observe that $\Taut_{d_{k-1}}^Y\subb\cV_{d_{k}+e_{k}}$ and consider the quotient bundle
$$
\tilde\cV:=\cV_{d_k+e_k}/\Taut_{d_{k-1}}^Y
$$
over~$Y$. It has rank $d_k-d_{k-1}+e_k$. We then have a canonical isomorphism of schemes over~$Y$ (hence over~$X$)\,:
\begin{equation}
\label{rel-Flag_eq}%
\Flag_X(\multi{d},\multi{e},\cV_\smallbullet)\quad\cong\quad\Grass_Y\big(d_k-d_{k-1},\tilde\cV\big)\,.
\end{equation}
Under this identification, we have $\Taut^{\Flag(\multi{d},\multi{e},\cV_\smallbullet)}_{d_i}=\Taut^Y_{d_i}$ for all $1\leqslant i\leqslant k-1$ and
\begin{equation}
\label{conv-Taut_eq}%
\Taut_{d_{k}}^{\Flag_{X}(\multi{d},\multi{e},\cV_\smallbullet)}/\Taut_{d_{k-1}}^{\Flag_{X}(\multi{d},\multi{e},\cV_\smallbullet)}
=
\Taut_{d_k-d_{k-1}}^{\Grass_{Y}(d_k-d_{k-1},\tilde\cV)}\,.
\end{equation}
\end{lemm}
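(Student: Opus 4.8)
The plan is to use the language of functors of points throughout, and to exhibit the isomorphism \eqref{rel-Flag_eq} by describing the two sides on $T$-points for an arbitrary $X$-scheme $T$ (in fact an arbitrary $Y$-scheme $T$, since both sides are schemes over $Y$). First I would fix such a $T$ and unwind the definitions. A $T$-point of the left-hand side $\Flag_X(\multi{d},\multi{e},\cV_\smallbullet)$ over a given $T$-point of $Y$ is, by Definition~\ref{Flag_def}, a flag $\cP_{d_1}\subb\cdots\subb\cP_{d_k}$ with $\cP_{d_i}\subb\cV_{d_i+e_i}$ and $\rank\cP_{d_i}=d_i$; the subflag $\cP_{d_1}\subb\cdots\subb\cP_{d_{k-1}}$ (with its inclusions into the $\cV_{d_i+e_i}$) is exactly the chosen $T$-point of $Y$, so the only extra datum is the bundle $\cP_{d_k}$ together with the inclusions $\cP_{d_{k-1}}\subb\cP_{d_k}\subb\cV_{d_k+e_k}$. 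On the right-hand side, a $T$-point of $\Grass_Y(d_k-d_{k-1},\tilde\cV)$ over the same $T$-point of $Y$ is a subbundle $\cQ\subb\tilde\cV=\cV_{d_k+e_k}/\Taut^Y_{d_{k-1}}$ of rank $d_k-d_{k-1}$, where $\Taut^Y_{d_{k-1}}$ restricts to $\cP_{d_{k-1}}$ on the $T$-point in question.

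The key step is then the standard bijection relating these two descriptions: assigning to $\cP_{d_k}$ the image $\cQ:=\cP_{d_k}/\cP_{d_{k-1}}$ of $\cP_{d_k}$ in the quotient $\cV_{d_k+e_k}/\cP_{d_{k-1}}$. Because $\cP_{d_{k-1}}\subb\cP_{d_k}$ is a subbundle, this quotient is a subbundle of $\cV_{d_k+e_k}/\cP_{d_{k-1}}$ of rank $d_k-d_{k-1}$; conversely, given $\cQ\subb\cV_{d_k+e_k}/\cP_{d_{k-1}}$, its preimage under the projection $\cV_{d_k+e_k}\to\cV_{d_k+e_k}/\cP_{d_{k-1}}$ is a subbundle $\cP_{d_k}$ of $\cV_{d_k+e_k}$ containing $\cP_{d_{k-1}}$, of rank $d_k$. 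These two assignments are mutually inverse and functorial in $T$, and they match the projections to $Y$ by construction, so they define the claimed isomorphism of schemes over $Y$ (hence over $X$). Along the way one records that $\rank\tilde\cV=(d_k+e_k)-d_{k-1}=d_k-d_{k-1}+e_k$, and that $\Taut^Y_{d_{k-1}}\subb\cV_{d_k+e_k}$ holds as a subbundle because $\cV_{d_{k-1}+e_{k-1}}\subb\cV_{d_k+e_k}$ by \eqref{partial_eq} and $\Taut^Y_{d_{k-1}}\subb\cV_{d_{k-1}+e_{k-1}}$ by construction of $Y$ (here one uses $d_{k-1}+e_{k-1}\leqslant d_k+e_k$ from \eqref{pre-de_eq}, which guarantees the flag index is in range).

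For the statements about tautological bundles, one simply reads them off the above dictionary: the stalk of $\Taut_{d_i}^{\Flag}$ at a point is $\cP_{d_i}$ (Definition~\ref{tavtologic_def}), and for $i\leqslant k-1$ this is the same datum as on $Y$, giving $\Taut_{d_i}^{\Flag}=\Taut^Y_{d_i}$; while the stalk of the tautological subbundle of $\Grass_Y(d_k-d_{k-1},\tilde\cV)$ at the point corresponding to $(\cP_{d_1},\ldots,\cP_{d_k})$ is precisely $\cQ=\cP_{d_k}/\cP_{d_{k-1}}$, which is the stalk of $\Taut_{d_k}^{\Flag}/\Taut_{d_{k-1}}^{\Flag}$, yielding \eqref{conv-Taut_eq}. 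The only point requiring a little care — the main (mild) obstacle — is checking that ``subbundle'' is preserved in both directions of the bijection, i.e.\ that taking the quotient $\cP_{d_k}/\cP_{d_{k-1}}$ and taking preimages under $\cV_{d_k+e_k}\twoheadrightarrow\tilde\cV$ both produce local direct summands of the correct rank; this is a local computation that reduces to the elementary fact that a short exact sequence of modules that is locally split remains so after passing to such sub/quotients, and it is where one uses Definition~\ref{subb_def} in an essential way. Everything else is a formal manipulation of functors of points.
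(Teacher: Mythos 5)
Your proposal is correct and takes essentially the same route as the paper: the paper's proof also works via the functor of points, splitting a flag $\cP_{d_1}\subb\cdots\subb\cP_{d_k}$ into the initial segment (a point of $Y$) plus the datum $\cP_{d_{k-1}}\subb\cP_{d_k}\subb\cV_{d_k+e_k}$, and identifying that last datum with the subbundle $\cP_{d_k}/\cP_{d_{k-1}}\subb\tilde\cV$. You have simply spelled out the ``details left to the reader,'' including the subbundle-preservation check and the identifications of tautological bundles.
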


\begin{proof}
This simply amounts to the bijective correspondence between a flag $\cP_{d_1}\subb\cdots\subb\cP_{d_{k-1}}\subb\cP_{d_{k}}$ satisfying $\cP_{d_i}\subb\cV_{d_i+e_i}$ for all $1\leqslant i\leqslant k$ and the following data\,:
\begin{enumerate}
\item[(a)]
the beginning of this flag $\cP_{d_1}\subb\cdots\subb\cP_{d_{k-1}}$ satisfying $\cP_{d_i}\subb\cV_{d_i+e_i}$ for all $1\leqslant i\leqslant k-1$,
\item[(b)]
the bundle $\cP_{d_{k}}$ such that $\cP_{d_{k-1}}\subb\cP_{d_{k}}\subb\cV_{d_{k}+e_{k}}$
\end{enumerate}
and to observe that (b) is equivalent to a subbundle $\tilde\cP\subb\cV_{d_k+e_k}/\cP_{d_{k-1}}$ of rank $d_k-d_{k-1}$, where $\tilde\cP:=\cP_{d_{k}}/\cP_{d_{k-1}}$. Details are left to the reader.
\end{proof}

\begin{conv}
\label{zero_conv}%
When using $k$-tuples $\multi{d}=(d_1,\ldots,d_k)$, it will unify several formulas to simply define $d_0=0$.
\end{conv}

\begin{prop}
\label{Pic-Flag_prop}%
Let $\multi{d}$ and~$\multi{e}$ be two $k$-tuples as in~\eqref{pre-de_eq} and $\cV_\smallbullet$ be a flag as in~\eqref{partial_eq}.
Then $\Flag_X(\multi{d},\multi{e})$ is smooth over~$X$ of relative
dimension~$\sum_{i=1}^{k}(d_{i}-d_{i-1})\,e_i$.
The Picard group of $\Flag_X(\multi{d},\multi{e})$ is generated by
$\Pic(X)$ and the ``new'' classes $\Det_{d_i}$\,:
\begin{equation} \label{Pic-Flag_eq}%
\Pic(\Flag_X(\multi{d},\multi{e})) \cong {\Pic}(X)\ \oplus\!
\bigoplus_{\ourfrac{1\leqslant i\leqslant k}{\text{s.t.\ }e_i\neq0}} \bbZ \,\Det_{d_i}\,.
\end{equation}
The class of the relative canonical bundle
$\can_{\Flag_X(\multi{d},\multi{e})/X}$ is given by the
formula
\begin{equation}\label{can-abs-Flag_eq}%
[\can_{\Flag_X(\multi{d},\multi{e})/X}] =
\prod\limits_{i=1}^k \,[\det\cV_{d_i+e_i}]^{-d_i+d_{i-1}} \cdot
\prod\limits_{i=1}^{k-1}\,\Det_{d_{i}}^{d_{i}-d_{i-1}
+ e_{i}-e_{i+1}}
\cdot \,\Det_{d_k}^{d_k-d_{k-1} + e_k}\,,
\end{equation}
where $\Det_{d_i}=[\det\cV_{d_i}]$ if $e_i=0$ by Remark~\ref{zero-Det_rem} and where we use Convention~\ref{zero_conv}. In particular, for $k=1$, this reads $[\can_{\Flag_X(\multi{d},\multi{e})/X}]=[\det\cV_{d_1+e_1}]^{-d_1}\cdot\Det_{d_1}^{d_1+e_1}$.

\end{prop}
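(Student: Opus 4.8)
The plan is to prove Proposition~\ref{Pic-Flag_prop} by induction on~$k$, using Lemma~\ref{rel_lem} to reduce the flag variety to an iterated Grassmann bundle and then applying the already-established results for a single Grassmann bundle, namely Proposition~\ref{Pic-Grass_prop} (smoothness, relative dimension and Picard group) and Proposition~\ref{rel-can-Grass_prop} (relative canonical bundle). The base case $k=1$ is exactly Example~\ref{k=1_exa}: here $\Flag_X(\multi{d},\multi{e})=\Grass_X(d_1,\cV_{d_1+e_1})$, so the relative dimension $d_1 e_1$ and the Picard group $\Pic(X)\oplus\bbZ\,\Det_{d_1}$ come from Proposition~\ref{Pic-Grass_prop}, and the relative canonical bundle $-d_1\det\cV_{d_1+e_1}+(d_1+e_1)\Det_{d_1}$ is precisely what Proposition~\ref{rel-can-Grass_prop} gives with $n=d_1+e_1$, $d=d_1$. (When $e_1=0$ everything is extended from~$X$ by Remark~\ref{zero-Det_rem}, consistent with the stated formula.)

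For the inductive step, assume $k\geqslant2$ and that the proposition holds for all shorter tuples. Write $Y=\Flag_X(\multi{d}\restr{k-1},\multi{e}\restr{k-1},\cV_\smallbullet)$; by the inductive hypothesis $Y$ is smooth over~$X$ of relative dimension $\sum_{i=1}^{k-1}(d_i-d_{i-1})e_i$, with Picard group and relative canonical bundle as in the formulas. Lemma~\ref{rel_lem} gives an isomorphism $\Flag_X(\multi{d},\multi{e})\cong\Grass_Y(d_k-d_{k-1},\tilde\cV)$ over~$Y$, where $\tilde\cV=\cV_{d_k+e_k}/\Taut^Y_{d_{k-1}}$ has rank $d_k-d_{k-1}+e_k$. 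First I would record smoothness and relative dimension: the composite $\Flag_X(\multi{d},\multi{e})\to Y\to X$ is smooth, and its relative dimension is $\dim(\Grass_Y(d_k-d_{k-1},\tilde\cV)/Y)+\dim(Y/X)=(d_k-d_{k-1})e_k+\sum_{i=1}^{k-1}(d_i-d_{i-1})e_i=\sum_{i=1}^k(d_i-d_{i-1})e_i$, as claimed. Next, for the Picard group, Proposition~\ref{Pic-Grass_prop} applied over the base $Y$ gives $\Pic(\Flag_X(\multi{d},\multi{e}))\cong\Pic(Y)\oplus\bbZ\,\Det^{\Grass}$ where $\Det^{\Grass}$ is the determinant of the tautological subbundle of $\tilde\cV$, which by~\eqref{conv-Taut_eq} equals $\Det_{d_k}-\Det_{d_{k-1}}$ (with $\Det_{d_{k-1}}$ pulled back from~$Y$). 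Combining with the inductive description of $\Pic(Y)$ and noting that $\bbZ\,\Det_{d_{k-1}}\oplus\bbZ(\Det_{d_k}-\Det_{d_{k-1}})=\bbZ\,\Det_{d_{k-1}}\oplus\bbZ\,\Det_{d_k}$ (a change of basis in the free abelian group) yields the stated formula $\Pic(X)\oplus\bigoplus_{i:\,e_i\neq0}\bbZ\,\Det_{d_i}$; the indices with $e_i=0$ drop out because then the corresponding Grassmann bundle is an identity (relative dimension~$0$) and $\Det_{d_i}=\det\cV_{d_i}$ is extended from~$X$, by Example~\ref{k=1_exa}/Remark~\ref{zero-Det_rem} applied in the induction.

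The only real computation is the relative canonical bundle, which uses the multiplicativity of relative canonical bundles in a tower: $\can_{\Flag_X(\multi{d},\multi{e})/X}=\can_{\Grass_Y(d_k-d_{k-1},\tilde\cV)/Y}+\pi^*\can_{Y/X}$. For the first term, Proposition~\ref{rel-can-Grass_prop} with base~$Y$, rank $\rank\tilde\cV=d_k-d_{k-1}+e_k$ and subbundle rank $d_k-d_{k-1}$ gives $-(d_k-d_{k-1})\det\tilde\cV+(d_k-d_{k-1}+e_k)(\Det_{d_k}-\Det_{d_{k-1}})$; and $\det\tilde\cV=\det\cV_{d_k+e_k}-\Det_{d_{k-1}}$ from the short exact sequence defining~$\tilde\cV$. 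Substituting the inductive formula~\eqref{can-abs-Flag_eq} for $\can_{Y/X}$ and collecting the coefficients of each $\det\cV_{d_i+e_i}$ and each $\Det_{d_i}$, the terms telescope: the $\Det_{d_{k-1}}$-coefficient coming from $\can_{Y/X}$, namely $(d_{k-1}-d_{k-2}+e_{k-1}-e_k)$ by the old "top" term, must now be augmented by the contribution $+(d_k-d_{k-1})-(d_k-d_{k-1}+e_k)=-e_k$ from the new terms, giving exactly $(d_{k-1}-d_{k-2}+e_{k-1}-e_k)$ corrected to the generic "middle" coefficient $(d_{k-1}-d_{k-2}+e_{k-1}-e_{k})$ — here I would carefully check the bookkeeping so that the old top index becomes a middle index and a genuinely new top term $(d_k-d_{k-1}+e_k)\Det_{d_k}$ appears, while the $\det\cV_{d_k+e_k}$-coefficient is $-(d_k-d_{k-1})$ as required. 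This index-shifting arithmetic, making the former top-term formula turn into the generic middle-term formula while spawning the correct new top term, is the one place demanding care; I expect it to be the main (though purely mechanical) obstacle, and I would present it as a short explicit coefficient comparison rather than a long display.
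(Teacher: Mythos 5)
Your proposal is correct and follows essentially the same route as the paper: induction on $k$ via Lemma~\ref{rel_lem}, with Propositions~\ref{Pic-Grass_prop} and~\ref{rel-can-Grass_prop} supplying the single-Grassmannian inputs and the canonical bundle assembled from $\can_{\Flag/Y}+\can_{Y/X}$. Only a small slip in the bookkeeping sentence: the $\Det_{d_{k-1}}$-coefficient coming from $\can_{Y/X}$ is $(d_{k-1}-d_{k-2}+e_{k-1})$, not already $(d_{k-1}-d_{k-2}+e_{k-1}-e_k)$; adding the $-e_k$ from the new Grassmann layer then yields the generic middle coefficient, as you intend.
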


\begin{proof}
By induction on~$k$. The case $k=1$ is that of a Grassmannian over~$X$ (Example~\ref{k=1_exa}) so the result follows from Propositions~\ref{Pic-Grass_prop} and~\ref{rel-can-Grass_prop}.

Let now $k\geqslant 2$. Consider $Y=\Flag_{X}(\multi{d}|_{k-1},\multi{e}|_{k-1},\cV_\smallbullet)$ and the bundle $\tilde\cV=\cV_{d_k+e_k}/\Taut_{d_{k-1}}$ over~$Y$, as in Lemma~\ref{rel_lem}.
Recall that $\rank_Y(\tilde\cV)=d_k-d_{k-1}+e_k$, which is always strictly positive ($d_k>d_{k-1}$) and which is bigger than or
equal to $d_k-d_{k-1}$ with equality if and only if $e_k=0$.
Equation~\eqref{rel-Flag_eq} and Propositions~\ref{Pic-Grass_prop} and~\ref{rel-can-Grass_prop} immediately give smoothness, the formula for the relative dimension and that for the Picard group~\eqref{Pic-Flag_eq}.
Finally, to prove~\eqref{can-abs-Flag_eq}, observe that
\begin{align*}
[\can_{\Flag_{X}(\multi{d},\multi{e})/Y}]
& = [\can_{\Grass_{Y}(d_k-d_{k-1},\tilde\cV)\,/\,Y}]
\\
& =
[\det\tilde\cV]^{-d_k+d_{k-1}} \cdot \left(\Det_{d_k-d_{k-1}}^{\Grass_{Y}(d_k-d_{k-1},\tilde\cV)}\right)^{\rank(\tilde\cV)}
\\
& = [\det\tilde\cV]^{-d_k+d_{k-1}} \cdot
\left(\Det_{d_k}^{\Flag_X(\multi{d},\multi{e})}\right)^{\rank(\tilde\cV)}\cdot\left(\Det_{d_{k-1}}^{\Flag_X(\multi{d},\multi{e})}\right)^{-\rank(\tilde\cV)}
\\
&
=[\det\cV_{d_k+e_k}]^{-d_k+d_{k-1}}\cdot\,\Det_{d_{k-1}}^{-e_k}
\cdot\,\Det_{d_{k}}^{d_k-d_{k-1}+e_k}\,.
\end{align*}
The first equality uses~\eqref{rel-Flag_eq}, the second comes from
Proposition~\ref{rel-can-Grass_prop} and the third from~\eqref{conv-Taut_eq}. The last equality is a direct
computation (in which we drop the mention of $\Flag_X(\multi{d},\multi{e})$ for readability). By induction hypothesis, we get $[\can_{Y/X}]$ from Equation~\eqref{can-abs-Flag_eq} for $k-1$, that is, for the flag variety~$Y$. Since $[\can_{\Flag_{X}(\multi{d},\multi{e})/X}]=[\can_{\Flag_{X}(\multi{d},\multi{e})/Y}]\cdot[\can_{Y/X}]$ over~$\Flag_X(\multi{d},\multi{e})$, we get~\eqref{can-abs-Flag_eq} for~$k$ by adding the above.
\end{proof}

\begin{coro}
\label{can-Flag_cor}%
Let $\multi{d}$ and~$\multi{e}$ be two $k$-tuples as
in~\eqref{pre-de_eq} and $\cV_\smallbullet$ be a flag as
in~\eqref{partial_eq}. Let $\cV$ be a vector bundle of rank~$d+e$
such that $\cV_{d_k+e_k}\subb\cV$\,. The class in
$\Pic(\Flag_X(\multi{d},\multi{e}))$ of the relative
canonical bundle for the morphism
$\ff_{\multi{d},\multi{e},\cV}:\Flag_X(\multi{d},\multi{e})\to\Grass_X(d,\cV)$
of~\eqref{pre-ff_eq} is given by
\begin{equation}
\label{can-Flag_eq}%
\begin{array}{rl}
[\can_{\Flag_X(\multi{d},\multi{e})/\Grass_X(d,\cV)}] & =\
\prod\limits_{i=1}^{k}\,[\det\cV_{d_i+e_i}]^{-d_i+d_{i-1}} \cdot \,[\det\cV]^{d_k}\cdot
\\
\cdot \prod\limits_{i=1}^{k-1} & \,\Det_{d_{i}}^{d_{i}-d_{i-1}+e_{i}-e_{i+1}} \cdot \,\Det_{d_k}^{-d_{k-1} + e_k - e}\,,
\end{array}
\end{equation}
where $\Det_{d_i}=[\det\cV_{d_i}]$ if $e_i=0$ by Remark~\ref{zero-Det_rem} and where we use Convention~\ref{zero_conv}. For $k=1$, this reads $[\can_{\Flag_X(\multi{d},\multi{e})/\Grass_X(d,\cV)}] =\,[\det(\cV/\cV_{d_1+e_1})]^{d_1}\cdot\,\Det_{d_1}^{e_1-e}$.
\end{coro}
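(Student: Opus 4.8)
The plan is to obtain~\eqref{can-Flag_eq} from the additivity of relative canonical bundles along a composition, exactly as in the proof of Proposition~\ref{Pic-Flag_prop}. Write $\ff:=\ff_{\multi{d},\multi{e},\cV}$ and let $\pi:\Grass_X(d,\cV)\to X$ be the structural morphism. Since $\ff$ is a morphism over~$X$, the composite $\pi\circ\ff$ is the structural morphism of $\Flag_X(\multi{d},\multi{e})$ over~$X$, so in $\Pic\big(\Flag_X(\multi{d},\multi{e})\big)$ we have
\[
\can_{\Flag_X(\multi{d},\multi{e})/X}\ =\ \can_{\Flag_X(\multi{d},\multi{e})/\Grass_X(d,\cV)}\ +\ \ff^*\can_{\Grass_X(d,\cV)/X}\,.
\]
Solving for the middle term reduces the statement to inserting the two already-known outer canonical bundles.

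For the first one I would substitute~\eqref{can-abs-Flag_eq}. For the second, Proposition~\ref{rel-can-Grass_prop} gives $\can_{\Grass_X(d,\cV)/X}=-d\,\det\cV+n\,\Det_d$ with $n=\rank\cV=d+e$; and here $d=d_k$, because the point of $\Grass_X(d,\cV)$ recorded by $\ff$ in~\eqref{pre-ff_eq} is the top member $\cP_{d_k}$ of the flag, which has rank~$d_k$. Pulling back along $\ff$, one has $\ff^*\det\cV=\det\cV$ (it is extended from~$X$) and $\ff^*\Det_d=\Det_{d_k}$, since the tautological subbundle of $\Grass_X(d,\cV)$ pulls back along $\ff$ to the tautological bundle $\Taut_{d_k}$ of $\Flag_X(\multi{d},\multi{e})$ --- both have fibre $\cP_{d_k}$ over $(\cP_{d_1},\ldots,\cP_{d_k})$, by~\eqref{pre-ff_eq} and Definition~\ref{tavtologic_def}. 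Hence $\ff^*\can_{\Grass_X(d,\cV)/X}=-d_k\,\det\cV+(d_k+e)\,\Det_{d_k}$.

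It then remains to subtract. The coefficient of $\det\cV$ becomes $+d_k$, the coefficient of $\Det_{d_k}$ becomes $(d_k-d_{k-1}+e_k)-(d_k+e)=-d_{k-1}+e_k-e$, and every other summand of~\eqref{can-abs-Flag_eq} is unchanged; this is precisely~\eqref{can-Flag_eq}. For $k=1$ one has $d_0=0$, so the formula reads $-d_1\,\det\cV_{d_1+e_1}+d_1\,\det\cV+(e_1-e)\,\Det_{d_1}$, and combining the first two terms via the short exact sequence $0\to\cV_{d_1+e_1}\to\cV\to\cV/\cV_{d_1+e_1}\to0$, which yields $\det\cV=\det\cV_{d_1+e_1}+\det(\cV/\cV_{d_1+e_1})$, gives the stated special case. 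There is no serious obstacle here: the only points needing care are the bookkeeping identity $d=d_k$ and the identification of the pullbacks $\ff^*\det\cV$ and $\ff^*\Det_d$, after which the computation is purely formal given Propositions~\ref{rel-can-Grass_prop} and~\ref{Pic-Flag_prop}.
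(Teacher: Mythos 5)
Your proof is correct and follows exactly the paper's argument: use the additivity $\can_{\Flag/X}=\can_{\Flag/\Grass}+\ff^*\can_{\Grass/X}$, substitute~\eqref{can-abs-Flag_eq} and the pullback of Proposition~\ref{rel-can-Grass_prop}, and subtract. The paper compresses this into a single sentence; your write-up is merely a more explicit rendering of that same subtraction, together with a careful identification of the pullbacks and the $k=1$ simplification, all of which check out.
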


\begin{proof}
Subtract
$(\ff_{\multi{d},\multi{e},\cV})^*\,[\can_{\Grass_X(d,\cV)/X}]=\,[\det\cV]^{-d_k}\cdot \,\Det_{d_k}^{d_k+e}$
(Prop.~\ref{rel-can-Grass_prop}) from the bundle
$[\can_{\Flag_X(\multi{d},\multi{e})/X}]$ given
in~\eqref{can-abs-Flag_eq}.
\end{proof}

\begin{rema}
When $\cV_{\smallbullet}=\cO_X^\smallbullet$, all the formulas are simpler, since all the $[\det \cV_i]$ are trivial. This applies in particular when $X=\Spec(R)$ for a local ring $R$.
\end{rema}


\section{Even Young diagrams}
\label{Young_sec}%


We introduce \emph{even} Young diagrams that will parameterize the basis of the total Witt group of the Grassmann variety, to be constructed in Section~\ref{gen-Witt_sec}.

\begin{defi}
\label{Young_def}%
Let $d,e\geqslant1$. We shall call \emph{Young
diagram in \deframe}, or simply \emph{$(d,e)$-diagram},
any $d$-tuple $\Lambda=(\Lambda_1,\Lambda_2,\ldots,\Lambda_d)$ of
integers such that:
$$
e \geqslant \Lambda_1 \geqslant \Lambda_2 \geqslant \ldots \geqslant \Lambda_d \geqslant 0.
$$
See Figure~\ref{framed_fig} in the Introduction. The \emph{area} of $\Lambda$ is
$|\Lambda|=\Lambda_1+\Lambda_2+\ldots+\Lambda_d$. These
$(d,e)$-diagrams are just ordinary Young diagrams displayed in the
upper left corner of a rectangle with $d$ rows and $e$ columns,
possibly leaving empty rows below and empty columns to the right of
the Young diagram. So, an ordinary Young diagram with $\rho$
rows and $\gamma$ columns defines a $(d,e)$-diagram
for any $d\geqslant \rho$ and $e\geqslant \gamma$.
\end{defi}

\begin{nota}
\label{empty-full_nota}%
The empty diagram $(0,\ldots,0)\in\bbN^d$ is denoted by $\vide$ and
the full $(d\times e)$-rectangle $(e,\ldots,e)\in\bbN^d$ by
$\rect{d}{e}$.
\end{nota}

\begin{defi}
\label{de_def}%
Let $d,e\geqslant1$ and let $\Lambda$ be a Young diagram in
\deframe. The decreasing sequence
$\Lambda_1\geqslant\Lambda_2\geqslant\ldots\geqslant\Lambda_d$ can be written in a
unique way as a series of equalities and strict inequalities\,:
\begin{equation}
\label{dk_eq}%
\underbrace{\Lambda_1=\cdots=\Lambda_{d_1}}_{d_1\text{
terms}}\ >\
\underbrace{\Lambda_{d_1+1}=\ldots=\Lambda_{d_2}}_{d_2-d_1\text{
terms}}\ >\ \cdots\ >\
\underbrace{\Lambda_{d_{k-1}+1}=\cdots=\Lambda_{d_k}}_{d_k-d_{k-1}\text{
terms}}=\Lambda_d\,.
\end{equation}
Note that $d_k=d$. The integers $k\geqslant1$ and $0<d_1<\ldots<d_k$
depend on $\Lambda$. If we need to stress this we shall write
$k=k(\Lambda)$ and $d_i=d_i(\Lambda)$ for $1\leqslant i\leqslant
k(\Lambda)$.

For fixed $d$ and $e$, there is a bijection
(pictured on Figure~\ref{partition_fig}) between the Young diagrams
$\Lambda$ in
\deframe\ and pairs of $k$-tuples of integers
\begin{equation}
\label{de_eq}%
\begin{array}{lcl}
\multi{d}=(d_1,\ldots,d_k) & \text{such that} & 0< d_1 < \cdots <d_k
= d
\\
\multi{e}=(e_1,\ldots,e_k) & \text{such that} & 0\leqslant e_1 <
\cdots < e_k \leqslant e
\end{array}
\end{equation}
with $1\leqslant k\leqslant d$. The integers $k=k(\Lambda)$ and
$d_i=d_i(\Lambda)$ are the above ones and we set
$e_i=e_i(\Lambda):=e-\Lambda_{d_i}$ for all $i=1,\ldots,k$. The converse
construction is obvious.
\end{defi}

\begin{figure}[!ht]
\begin{center}
\begin{tikzpicture}[y=-1cm]

\draw[thick,black] (8.8,0.8) rectangle (11.6,4.4);
\draw[thick,arrows=stealth-stealth,black] (10,2.1) -- (11.6,2.1);
\draw[thick,arrows=stealth-stealth,black] (8.8,3.7) -- (11.6,3.7);
\draw[thick,arrows=stealth-stealth,black] (9.6,2.9) -- (11.6,2.9);
\path[draw=black,thick,arrows=stealth-stealth] (12,4.4) -- (12,0.8);
\path[draw=black,thick,arrows=stealth-stealth] (8.8,4.8) -- (11.6,4.8);
\path[draw=black,thick,arrows=stealth-stealth] (7.2,0.8) -- (7.2,2);
\path[draw=black,thick,arrows=stealth-stealth] (7.6,0.8) -- (7.6,2.8);
\path[draw=black,thick,arrows=stealth-stealth] (8,0.8) -- (8,3.6);
\path[draw=black,thick,arrows=stealth-stealth] (8.4,0.8) -- (8.4,4.4);
\draw[thick,black] (2,0.8) rectangle (4.8,4.4);
\draw[thick,arrows=stealth-stealth,black] (4,1) -- (4.8,1);
\draw[thick,arrows=stealth-stealth,black] (3.2,2.9) -- (4.8,2.9);
\path[draw=black,thick,arrows=stealth-stealth] (5.2,4.4) -- (5.2,0.8);
\path[draw=black,thick,arrows=stealth-stealth] (2,4.8) -- (4.8,4.8);
\path[draw=black,thick,arrows=stealth-stealth] (1.6,0.8) -- (1.6,4.4);
\path[draw=black,thick,arrows=stealth-stealth] (1.2,0.8) -- (1.2,2.8);
\path[draw=black,thick,arrows=stealth-stealth] (0.8,0.8) -- (0.8,2);
\draw[thick,arrows=stealth-stealth,black] (3.6,2.1) -- (4.8,2.1);
\path (12.2,2.6) node[text=black,anchor=base west] {$d$};
\path (9.9,0.5) node[text=black,anchor=base west] {$k=4$};
\path (7.9,4.1) node[text=black,anchor=base west] {$d_4$};
\path (7.5,3.3) node[text=black,anchor=base west] {$d_3$};
\path (7.1,2.5) node[text=black,anchor=base west] {$d_2$};
\path (6.7,1.6) node[text=black,anchor=base west] {$d_1$};
\path (10.1,5.2) node[text=black,anchor=base west] {$e$};
\path (5.4,2.6) node[text=black,anchor=base west] {$d$};
\path (3.9,3.2) node[text=black,anchor=base west] {$e_3$};
\path (0.3,1.5) node[text=black,anchor=base west] {$d_1$};
\path (4.3,1.3) node[text=black,anchor=base west] {$e_1$};
\path (4.1,2.4) node[text=black,anchor=base west] {$e_2$};
\path (3,0.5) node[text=black,anchor=base west] {$k=3$};
\path (0.7,2.5) node[text=black,anchor=base west] {$d_2$};
\path (1.1,3.7) node[text=black,anchor=base west] {$d_3$};
\path (3.3,5.2) node[text=black,anchor=base west] {$e$};
\path (2.6,2.1) node[text=black,anchor=base west] {$\Lambda$};
\path (9.3,1.8) node[text=black,anchor=base west] {$\Lambda$};
\path (10.6,1.5) node[text=black,anchor=base west] {$e_1\!=\!0$};
\path (10.3,3.2) node[text=black,anchor=base west] {$e_3$};
\path (10,4) node[text=black,anchor=base west] {$e_4$};
\path (10.5,2.4) node[text=black,anchor=base west] {$e_2$};

\path[draw=black,thick,fill=white!50!black] (8.8,0.8) -- (11.6,0.8) -- (11.6,2) -- (10,2) -- (10,2.8) -- (9.6,2.8) -- (9.6,3.6) -- (8.8,3.6) -- cycle;
\path[draw=black,thick,fill=white!50!black] (2,0.8) -- (4,0.8) -- (4,2) -- (3.6,2) -- (3.6,2.8) -- (3.2,2.8) -- (3.2,4.4) -- (2,4.4) -- cycle;

\end{tikzpicture}
\end{center}
\caption{Two examples of the two $k$-tuples $(d_1,\ldots,d_k)$ and
$(e_1,\ldots,e_k)$ corresponding to a Young diagram $\Lambda$ in \deframe.}
\label{partition_fig}%
\end{figure}
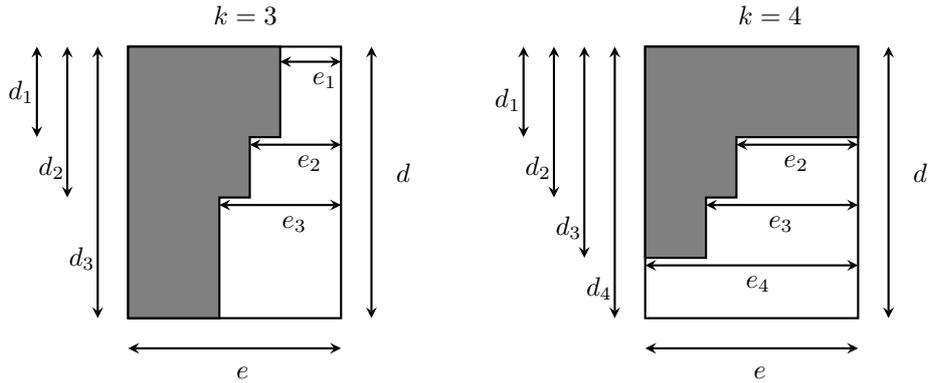

\begin{defi}\label{re-Flag_def}%
Let $d,e\geqslant1$. Fix a complete flag of vector bundles over~$X$
\begin{equation}
\label{complete_eq}%
0=\cV_0 \subb \cV_1 \subb \cdots \subb \cV_i \subb \cdots \subb \cV_{d+e}=:\cV\,.
\end{equation}
Note that we baptize $\cV$ the bundle of dimension $d+e$, to lighten notation.

Let $\Lambda$ be a Young diagram in
\deframe. By Definition~\ref{de_def}, this amounts to a pair $(\multi{d},\multi{e})$ of $k$-tuples of integers satisfying~\eqref{de_eq}, and hence satisfying~\eqref{pre-de_eq}. We can now apply Definition~\ref{Flag_def} to $\multi{d}$ and $\multi{e}$ and the flag~\eqref{partial_eq} taken from~\eqref{complete_eq} above\,:
\begin{equation}
\label{Flag-Lambda_eq}%
\Flag_X(d,e,\cV_\smallbullet;\Lambda):=\Flag_{X}(\multi{d},\multi{e},\cV_\smallbullet)
= \left\{
\begin{array}{c}
\xymatrix@R=1.5ex@C=1.5ex{ 0 \ar@{}[r]|-{\subb} & \cP_{d_1}
\ar@{}[d]|{\rotatebox{270}{$\subbeq$}}^*!/^.5ex/{\labelstyle e_1}
\ar@{}[r]|{\subb} & \cP_{d_2}
\ar@{}[d]|{\rotatebox{270}{$\subbeq$}}^*!/^.5ex/{\labelstyle e_2}
\ar@{}[r]|{\subb} & \cdots \ar@{}[r]|{\subb} & \cP_{d_k}
\ar@{}[d]|{\rotatebox{270}{$\subbeq$}}^*!/^.5ex/{\labelstyle e_k}
\\
0 \ar@{}[r]|-{\subb} & \cV_{d_1 + e_1} \ar@{}[r]|-{\subb} & \cV_{d_2
+ e_2} \ar@{}[r]|-{\subb} & \cdots \ar@{}[r]|-{\subb} & \cV_{d_k +
e_k} }
\end{array}
\right\}\,.
\end{equation}
As usual, instead of $\Flag_X(d,e,\cV_\smallbullet;\Lambda)$, we might simply write $\Flag(\Lambda)$ or anything ``in between'' depending on what is obvious from the context.

As in~\eqref{pre-ff_eq}, there is a natural morphism
$\ff_{\Lambda}$ from $\Flag_X(d,e;\Lambda)$ to $\Grass(d,\cV)$
\begin{equation}
\label{ff_eq}%
\begin{array}{rccc}
\ff_\Lambda=\ff_{d,e;\Lambda}:=\ff_{\multi{d},\multi{e},\cV}\,:
&\Flag_X(d,e;\Lambda) & \too & \Grass(d,\cV)
\\
&(\cP_{d_1},\ldots, \cP_{d_k}) & \longmapsto & \cP_{d_k}\,.
\end{array}
\end{equation}

When $X=\Spec(F)$ is a field, one can understand the image of
$\ff_\Lambda$ as the subset of those subspaces $\cP_d\subb \cV$ whose
intersection with each $\cV_{d_i+e_i}$ is of dimension at
least~$d_i$. This is the classical Schubert cell associated to the
diagram~$\Lambda$. It is pretty clear that $\ff_{\Lambda}$ is a
birational morphism. The advantage of $\Flag_X(d,e;\Lambda)$ over the Schubert
cell is that $\Flag_X(d,e;\Lambda)$ is not singular by Proposition~\ref{Pic-Flag_prop}.
\end{defi}

\begin{exam}
Following up on Example~\ref{k=1_exa}, when
$\Lambda=\vide$ is the empty diagram, that is for $k=1$ and
$e_1=e$, we have $\Flag_X(\vide)=\Grass_X(d,\cV)$ and $\ff_\vide$ is the identity. At the other
end, for $\Lambda=\rect{d}{e}$ the whole $(d\times e)$-rectangle,
that is for $k=1$ and $e_1=0$, we have
$\Flag_X(d,e;\Lambda)=\Grass_X(d,\cV_d)=X$ and $\ff_\Lambda$ is a closed immersion.
\end{exam}

\begin{defi}
\label{zerow_def}%
Let $\Lambda$ be a Young diagram in \deframe. We define $\row(\Lambda)\in\{0,\ldots,d\}$ to be the number of non-zero rows of $\Lambda$. Complementarily, we
define $\zero(\Lambda)=d-\row(\Lambda)$ to be the number of zero rows at the end of $\Lambda$, that is
$$
\begin{array}{lcll}
\row(\Lambda)=d & \text{and} & \zero(\Lambda)=0
 & \text{if }\Lambda_d>0\,,
\\
\row(\Lambda)=d_{k-1} & \text{and} & \zero(\Lambda)=d-d_{k-1}
& \text{if }\Lambda_d=0\,.
\end{array}
$$
For the empty diagram, we have $\row(\vide)=0$ and $\zero(\vide)=d$.
\end{defi}

We are going to use a certain class of $(d,e)$-diagrams, that we
call the \emph{even} $(d,e)$-diagrams. Defining them by a picture is
very easy. The condition to be even is that any segment of the
$(d,e)$-diagram which does not belong to the outer \deframe\ must
have even length. See Figure~\ref{even_fig}. The formal definition
is the following.

\begin{defi}
\label{even_def}%
Let $\Lambda$ be a Young diagram in \deframe\ and
let $\multi{d}$ and $\multi{e}$ be the associated $k$-tuples as in
Definition~\ref{de_def}. We say that $\Lambda$ is \emph{even} if all
the following conditions are satisfied:
\begin{enumerate}
\item[(i)] $d_{i+1}-d_i$ is even for all $i=1,\ldots,k-2$ (for
$k\geqslant 3$ otherwise no condition),
\item[(ii)] $e_{i+1}-e_i$ is even, for all
$i=1,\ldots,k-1$ (for $k\geqslant 2$),
\item[(iii)] when $0<e_1<e$ we also require $d_1$ to be even,
\item[(iv)] when $0<e_k<e$ we also require $d_k-d_{k-1}$ to be even.
\end{enumerate}
\end{defi}

\begin{exam}
For any $d,e\geqslant1$, both the empty diagram $\vide$ and
the full-rectangle $\rect{d}{e}$ are even $(d,e)$-diagrams (see Not.\,\ref{empty-full_nota}). Indeed,
in both cases, $k=1$ and $\multi{d}=(d)$, whereas
$\multi{e}(\vide)=(e)$ and $\multi{e}(\rect{d}{e})=(0)$; so there is
no condition to check.

When $d=1$ or $e=1$, these are the only even Young diagram in \deframe.

For more examples, the reader can find all even Young diagrams in
the cases $(d,e)=(4,4)$, $(4,5)$ and $(5,5)$ in
Figures~\ref{G44_fig}, \ref{G45_fig} and~\ref{G55_fig}, at the end
of the paper.
\end{exam}

\begin{rema}
Definition~\ref{even_def} depends on $d$ and $e$ as well as on
the Young diagram~$\Lambda$. For an even $(d,e)$-diagram $\Lambda$ to remain even in a bigger frame, we might have one or two more conditions to check, namely~(iii) or~(iv) in Definition~\ref{even_def}, in the case where $\Lambda$ was
touching the right border or the bottom border of its \deframe.
\end{rema}

\begin{rema}
For each even $(d,e)$-diagram we will construct an element in
one of the Witt groups of the Grassmannian $\Grass_X(d,\cV)$.
The proof that these Witt classes actually form a total
basis will proceed by induction on~$d+e=\rank(\cV)$, using the long
exact sequence of localization associated to a natural ``cellular''
decomposition of the Grassmannians. In that proof, we shall need the
description in terms of Young diagrams of the various Witt group
homomorphisms appearing in that long exact sequence. As we shall
see, these are the ones of the next proposition. This explains why
the following constructions are relevant here.
\end{rema}

\begin{defi} \label{bar_defi}

Let $\Lambda'$ be an even $(d,e-1)$-diagram with $d\geq 1$, $e \geq 2$ and such that $\zero(\Lambda')$ is even. We define the $(d,e)$-diagram $\barpush(\Lambda')$ as
\begin{equation}
\label{bar-iota_eq}%
\barpush(\Lambda')= (\Lambda'_1+1,\ldots,\Lambda'_d+1).
\end{equation}

Let $\Lambda$ be an even $(d,e)$-diagram with $d\geq 2$, $e\geq 1$ and such that $\Lambda_d=0$. We define the $(d-1,e)$-diagram $\barres(\Lambda)$ as
\begin{equation}
\label{bar-kappa_eq}%
\barres(\Lambda)=
\Lambda\restr{d-1,e}.
\end{equation}

Let $\Lambda''$ be an even $(d-1,e)$-diagram with $d,e\geq 2$ and such that $\Lambda''_{d-1}$ is odd. We define the $(d,e-1)$-diagram $\barbord(\Lambda'')$ as
\begin{equation}
\label{bar-bord_eq}%
\barbord(\Lambda'')=
(\Lambda_1''-1,\ldots,\Lambda_{d-1}''-1,0).
\end{equation}
See Figures~\ref{push_fig}, \ref{pull_fig} and~\ref{bord_fig}.
\end{defi}

\begin{prop}
\label{exact-Lambda_prop} %
\begin{enumerate}
\item When $d \geq 1$ and $e\geq 2$, the map $\barpush$ defines a bijection
$$
\xymatrix@R=.3em{ \left\{\Ourfrac{\text{even Young
}(d,e-1)\text{-diagrams}}{\Lambda'\text{ such that
}\zero(\Lambda')\text{ is even}}\right\} \ar@{<->}[r]^-{\simeq} &
\left\{\Ourfrac{\text{even Young
}(d,e)\text{-diagrams}}{\Lambda\text{ such that
}\Lambda_d>0}\right\}
\\
\qquad\Lambda'\qquad\ar@{|->}[r] & \quad
\barpush(\Lambda')=(\Lambda'_1+1,\ldots,\Lambda'_d+1)\quad
\\
(\Lambda_1-1,\ldots,\Lambda_d-1) & \qquad\Lambda \qquad\ar@{|->}[l]}
$$
\medbreak
\item When $d\geq 2$ and $e\geq 2$, the map $\barres$ defines a bijection
$$
\xymatrix@R=.3em{ \left\{\Ourfrac{\text{even Young
}(d,e)\text{-diagrams}}{\Lambda\text{ such that
}\Lambda_d=0}\right\} \ar@{<->}[r]^-{\simeq} &
\left\{\Ourfrac{\text{even Young
}(d-1,e)\text{-diagrams}}{\Lambda''\text{ such that
}\Lambda''_{d-1}\text{ is even}}\right\}
\\
\qquad\Lambda\qquad\ar@{|->}[r] & \qquad \barres(\Lambda)=\Lambda\restr{d-1,e}\quad
\\
(\Lambda_1'',\ldots,\Lambda_{d-1}'',0) & \qquad\Lambda''
\qquad\ar@{|->}[l]}
$$
\medbreak
\item When $d,e \geq 2$, the map $\barbord$ defines a bijection
$$
\xymatrix@R=.3em{ \left\{\Ourfrac{\text{even Young
}(d-1,e)\text{-diagrams}}{\Lambda''\text{ such that
}\Lambda''_{d-1}\text{ is odd}}\right\} \ar@{<->}[r]^-{\simeq} &
\left\{\Ourfrac{\text{even Young
}(d,e-1)\text{-diagrams}}{\Lambda'\text{ such that
}\zero(\Lambda')\text{ is odd}}\right\}
\\
\qquad\Lambda''\qquad\ar@{|->}[r] &
\barbord(\Lambda'')=(\Lambda_1''-1,\ldots,\Lambda_{d-1}''-1,0)
\\
(1+\Lambda_1',\ldots,1+\Lambda'_{d-1}) & \qquad\Lambda'
\qquad\ar@{|->}[l]}
$$
\end{enumerate}
\end{prop}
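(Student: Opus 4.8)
All three statements have the same shape, so I would prove them in parallel. The first step is to check that in each of~(a), (b), (c) the displayed formulas define mutually inverse bijections on the underlying tuples, before evenness is imposed: in~(a) the assignment $\Lambda'\mapsto\Lambda'+(1,\dots,1)$ visibly identifies all $(d,e-1)$-diagrams with all $(d,e)$-diagrams having $\Lambda_d>0$; in~(b), deleting and appending a trailing zero row identifies all $(d,e)$-diagrams with $\Lambda_d=0$ with all $(d-1,e)$-diagrams; in~(c), ``subtract $1$ from every row, then append a zero row'' identifies all $(d-1,e)$-diagrams with $\Lambda''_{d-1}>0$ with all $(d,e-1)$-diagrams with $\zero(\Lambda')\geqslant1$. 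Each is a one-line verification on decreasing tuples. The content of the proposition is then that, under these bijections, ``even'' together with the stated parity constraint on $\zero(\Lambda')$ (respectively on $\Lambda''_{d-1}$) on one side matches ``even'' together with the stated parity constraint on the other.

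To see this I would pass to the pair of $k$-tuples $(\multi{d},\multi{e})$ of Definition~\ref{de_def} and record how $(\multi{d},\multi{e})$, the number $k$ of blocks, and the integers $\zero,\row$ transform. The bookkeeping fact that makes this tractable is that each map alters $(\multi{d},\multi{e})$ only near the last block: in~(a) the pair $(\multi{d},\multi{e})$ is literally unchanged and only the frame width drops by one (one has $e_i=e-\Lambda_{d_i}=(e-1)-\Lambda'_{d_i}$), with $\zero(\Lambda')=0$ when $e_k\leqslant e-2$ and $\zero(\Lambda')=d_k-d_{k-1}$ when $e_k=e-1$; in~(b) and~(c) the pair $(\multi{d},\multi{e})$ is unchanged apart from its last entry, and when the deleted (respectively created) zero block has size $1$ the number $k$ of blocks drops by one, the final step merging in (or splitting off). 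Having recorded this, I would match conditions (i)--(iv) of Definition~\ref{even_def} term by term on the two sides. Outside the small cases $k\leqslant2$, which I would dispose of by hand using the convention $d_0=0$ of Convention~\ref{zero_conv}, conditions~(i) and~(ii) at indices below the last, and condition~(iii), transfer verbatim, as they involve only the initial steps $d_1<\dots<d_{k-1}$ and the left edge $e_1$, none of which moves; so the whole matter comes down to condition~(iv), the last instance of condition~(ii), and the extra parity hypothesis.

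The mechanism at the heart of the proof --- and the step that needs genuine care --- is a ``boundary trade-off''. Condition~(iv) is active precisely when $0<e_k<e$; when one passes from an $e$-frame to an $(e-1)$-frame (cases~(a), (c)) it becomes vacuous exactly when $e_k=e-1$, that is, exactly when the last block hits the new right edge --- and that is precisely the configuration in which the parity hypothesis on $\zero$ (or on $\Lambda''_{d-1}$) enters and re-imposes, or negates, the parity of $d_k-d_{k-1}$. Dually, in~(b) and~(c), when a zero block of size $1$ is deleted, condition~(iv) for the shorter tuple takes over the role previously played by condition~(i) at its last index, while the last instance of condition~(ii) for the longer tuple is supplied by the parity of $\Lambda''_{d-1}=e-e_{k-1}$. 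So concretely I would, for each of~(a), (b), (c), split into the cases ``the relevant zero block has size $\geqslant2$'' versus ``size~$=1$'', further isolating $k=1$ and $k=2$, and in each case check that the two lists of conditions coincide.

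The only real difficulty I anticipate is this bookkeeping: keeping straight which of (i)--(iv) is active on each side in each degenerate configuration. The recurring device that closes the apparent gaps is the inequality $0\leqslant e_1<e_2<\dots<e_k\leqslant e$, which forces $e_i\geqslant i-1$ and hence, for instance, $e_{k-1}\geqslant1$ as soon as $k\geqslant3$, ruling out precisely the situations where a condition would look vacuous on one side but not on the other. There is no conceptual obstruction, the statement being in the end a finite, if intricate, comparison of parity constraints; and one can cross-check the result against the complete lists of even diagrams for $(d,e)=(4,4),(4,5),(5,5)$ displayed in Figures~\ref{G44_fig}, \ref{G45_fig} and~\ref{G55_fig}.
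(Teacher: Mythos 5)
Your proposal is correct and elaborates exactly what the paper's terse proof sketch (``checking that the announced constructions are well-defined and that they preserve even diagrams\ldots\ is straightforward\ldots\ most easily performed and followed on pictures'') asks the reader to verify. The ``boundary trade-off'' you identify --- that condition~(iv) of Definition~\ref{even_def} becomes vacuous or active precisely where the extra parity hypothesis on $\zero(\Lambda')$ or $\Lambda''_{d-1}$ enters --- is the correct mechanism, and your case split on the size of the deleted/created zero block and on $k\leqslant 2$ versus $k\geqslant 3$ captures the degenerate configurations the paper's picture-based argument handles visually.
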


\begin{proof}
The proof essentially consists in checking that the announced
constructions are well-defined and that they preserve even diagrams.
Checking that they are mutually inverse constructions is
straightforward. The notation $\Lambda\restr{d-1,e}$ is the obvious
one\,: we view a diagram with empty last row in a smaller frame. All
this is most easily performed and followed on pictures. For
instance, the maps from left to right are pictured in the upper
parts of Figures~\ref{push_fig}, \ref{pull_fig} and~\ref{bord_fig}
below.
\end{proof}


%
\begin{figure}[!ht]
\begin{center}
\begin{tikzpicture}[y=-1cm]
\bigarrowtipdef

\path[draw=black,fill=black!0,arrows=|-biggertip] (4.4,2.8) -- (6,2.8);
\path[draw=black,fill=black!0,arrows=|-biggertip] (4.4,6) -- (6,6);
\path[draw=black,thick,arrows=stealth-stealth] (9.4,3.8) -- (9.4,1.8);
\path[draw=black,thick,arrows=stealth-stealth] (7,4.2) -- (9,4.2);
\path[draw=black,thick,fill=black!50] (7.4,3) -- (8.2,3) -- (8.2,2.2) -- (9,2.2) -- (9,1.8) -- (7,1.8) -- (7,3.8) -- (7.4,3.8) -- cycle;
\draw[thick,dashed,black] (7.4,1.8) -- (7.4,3);
\path[draw=black,thick,arrows=stealth-stealth] (1.6,4.2) -- (3.2,4.2);
\path[draw=black,thick,fill=black!50] (1.6,3) -- (2.4,3) -- (2.4,2.2) -- (3.2,2.2) -- (3.2,1.8) -- (1.6,1.8) -- cycle;
\path[draw=black,thick,arrows=stealth-stealth] (3.6,3.8) -- (3.6,1.8);
\path[draw=black,thick,fill=black!50] (1.6,5.8) -- (3.2,5.8) -- (3.2,5) -- (1.6,5);
\path[draw=black,thick,arrows=stealth-stealth] (1.5,7) -- (1.5,5.8);
\path[draw=black,thick,arrows=stealth-stealth] (1.5,3.8) -- (1.5,3);
\path (7.9,4.5) node[text=black,anchor=base west] {$e$};
\path (9.4,2.6) node[text=black,anchor=base west] {$d$};
\path (3.6,2.6) node[text=black,anchor=base west] {$d$};
\path (7.8,6.1) node[text=black,anchor=base west] {$0$};
\path (0.6,3.5) node[text=black,anchor=base west] {even};
\path (0.7,6.5) node[text=black,anchor=base west] {odd};
\path (1.9,4.5) node[text=black,anchor=base west] {$e-1$};

\draw[thick,black] (7,1.8) rectangle (9,3.8);
\draw[thick,black] (1.6,1.8) rectangle (3.2,3.8);
\draw[thick,black] (1.6,5) rectangle (3.2,7);

\end{tikzpicture}
\end{center}
\caption{Morphism $\barpush$ on various $(d,e-1)$-diagrams $\Lambda'$.} \label{push_fig}%
\end{figure}

\vfill

\begin{figure}[!ht]
\begin{center}
\begin{tikzpicture}[y=-1cm]
\bigarrowtipdef

\path[draw=black,thick,fill=black!50] (2,2.6) -- (2.8,2.6) -- (2.8,1.8) -- (3.6,1.8) -- (3.6,1.4) -- (2,1.4) -- cycle;
\path[draw=black,thick,fill=black!50] (7.2,2.6) -- (8,2.6) -- (8,1.8) -- (8.8,1.8) -- (8.8,1.4) -- (7.2,1.4) -- cycle;
\draw[thick,black] (7.2,1.4) rectangle (8.8,3);
\draw[thick,black] (2,1.4) rectangle (3.6,3.4);
\path[draw=black,thick,arrows=stealth-stealth] (4,3.4) -- (4,1.4);
\path[draw=black,thick,arrows=stealth-stealth] (2,3.8) -- (3.6,3.8);
\path[draw=black,thick,arrows=stealth-stealth] (9.2,3) -- (9.2,1.4);
\path[draw=black,thick,arrows=stealth-stealth] (7.2,3.8) -- (8.8,3.8);
\draw[thick,dashed,black] (7.2,3) -- (7.2,3.4) -- (8.8,3.4) -- (8.8,3);
\path[draw=black,fill=black!0,arrows=|-biggertip] (4.8,2.4) -- (6.4,2.4);
\path[draw=black,thick,fill=black!50] (2,6.6) -- (2.8,6.6) -- (2.8,5) -- (3.6,5) -- (3.6,4.6) -- (2,4.6) -- cycle;
\draw[thick,black] (2,4.6) rectangle (3.6,6.6);
\path[draw=black,fill=black!0,arrows=|-biggertip] (4.8,5.6) -- (6.4,5.6);
\path (9.2,2.3) node[text=black,anchor=base west] {$d-1$};
\path (4,2.3) node[text=black,anchor=base west] {$d$};
\path (8,5.8) node[text=black,anchor=base west] {$0$};
\path (2.7,4.1) node[text=black,anchor=base west] {$e$};
\path (7.9,4.1) node[text=black,anchor=base west] {$e$};

\end{tikzpicture}
\end{center}
\caption{Morphism $\barres$ on various $(d,e)$-diagrams $\Lambda$.} \label{pull_fig}%
\end{figure}

\vfill

\begin{figure}[!ht]
\begin{center}
\begin{tikzpicture}[y=-1cm]
\bigarrowtipdef

\path[draw=black,fill=black!0,arrows=|-biggertip] (4.8,2.8) -- (6.4,2.8);
\path[draw=black,fill=black!0,arrows=|-biggertip] (4.8,6) -- (6.4,6);
\path[draw=black,thick,arrows=stealth-stealth] (1.2,3.5) -- (2.4,3.5);
\path[draw=black,thick,arrows=stealth-stealth] (3.6,3.4) -- (3.6,1.8);
\path[draw=black,thick,arrows=stealth-stealth] (1.2,4.2) -- (3.2,4.2);
\path[draw=black,thick,fill=black!50] (2.4,2.6) -- (3.2,2.6) -- (3.2,1.8) -- (1.2,1.8) -- (1.2,3.4) -- (2.4,3.4) -- cycle;
\path[draw=black,thick,arrows=stealth-stealth] (7.2,4.2) -- (8.8,4.2);
\path[draw=black,thick,arrows=stealth-stealth] (9.2,3.8) -- (9.2,1.8);
\draw[thick,dashed,black] (8.8,3.4) -- (6.8,3.4) -- (6.8,1.8) -- (7.2,1.8);
\path[draw=black,thick,fill=black!50] (8,2.6) -- (8.8,2.6) -- (8.8,1.8) -- (7.2,1.8) -- (7.2,3.4) -- (8,3.4) -- cycle;
\path[draw=black,thick,fill=black!50] (1.2,6.6) -- (2,6.6) -- (2,5.8) -- (2.8,5.8) -- (2.8,5) -- (1.2,5);
\path[draw=black,thick,arrows=stealth-stealth] (1.2,6.7) -- (2,6.7);
\path (7.8,6.1) node[text=black,anchor=base west] {$0$};
\path (3.6,2.6) node[text=black,anchor=base west] {$d-1$};
\path (1.4,3.8) node[text=black,anchor=base west] {odd};
\path (1.2,7) node[text=black,anchor=base west] {even};
\path (9.2,2.9) node[text=black,anchor=base west] {$d$};
\path (7.6,4.5) node[text=black,anchor=base west] {$e-1$};
\path (2.1,4.5) node[text=black,anchor=base west] {$e$};

\draw[thick,black] (1.2,1.8) rectangle (3.2,3.4);
\draw[thick,black] (7.2,1.8) rectangle (8.8,3.8);
\draw[thick,black] (1.2,5) rectangle (3.2,6.6);

\end{tikzpicture}
\end{center}
\caption{Morphism $\barbord$ on various
$(d-1,e)$-diagrams $\Lambda''$.} \label{bord_fig}%
\end{figure}


\section{Total bases and lax-similitude}
\label{tot-form_sec}%

\begin{conv}
From now on, $\BS$ denotes a {\em regular} noetherian $\bbZ[\frac{1}{2}]$-scheme of finite Krull dimension.
\end{conv}

For precise statements and proofs of our results, it is convenient
to use the language of total bases and \lax\ similitude, as
developed in~\cite{Balmer11}. Here is a brief list of the
relevant facts.

We restrict to the subcategory $\SmPic_\BS$ of $\BS$-schemes
$\pi_\CS:\CS\to \BS$ that are smooth over $\BS$ and that
satisfy the following assumptions on Picard groups and global
sections~\cite[Definition 4.1]{Balmer11}:
\begin{enumerate}[\indent(I)]
\item \label{picinj_item} The map $\pi_\CS^*:\Pic(\BS) \to \Pic(\CS)$ is injective.
\smallbreak
\item \label{2torzero_item} The abelian group
$\Pic_\BS(\CS):=\Pic(\CS)/\pi_\CS^*\big(\Pic(\BS)\big)$ has no $2$-torsion.
\smallbreak
\item \label{Gmhyp_item} The map $\pi_\CS^*:\Gm(\BS) \to \Gm(\CS)/\Gm(\CS)^2$ is surjective.
\end{enumerate}
This ensures that the notions considered below are well-behaved.

\begin{rema}
All schemes considered in the computations of the remaining Sections~\ref{gen-Witt_sec} to~\ref{main_sec} are in the category $\SmPic_X$. Indeed, most are flag varieties constructed iteratively from $X$ as towers of Grassmann bundles (see Lemma~\ref{rel_lem}), so the Picard group assumptions follow from Proposition~\ref{Pic-Flag_prop}. Similarly, Property~\eqref{Gmhyp_item} follows from~\cite[Thm. 2.3.1]{EGA3-1}, \ie in all our examples, $\Gm(\BS)\to \Gm(\CS)$ is already surjective. The remaining schemes are vector bundles over these flag varieties, so each of them has the same Picard group and invertible global sections as its base. 
\end{rema}

\begin{defi}[{\cite[Definition 2.3]{Balmer11}}]
Let $L_1$ and $L_2$ be line bundles over a scheme~$\CS$. An
alignment $A:L_1 \alto L_2$ is a pair $A=(M,\psi)$ consisting of a
line bundle $M$ over $\CS$ together with an isomorphism
$\psi:M^{\otimes 2}\otimes L_1 \isoto L_2$. Of course, such an
alignment exists if and only if $[L_1]=[L_2]$ in $\Pic(\CS)/2$. It
induces an isomorphism on Witt groups
$$\alis{A}:\Wcoh^*(\CS,L_1) \isoto \Wcoh^*(\CS,L_2)$$
defined as the composition of multiplication by the form $M\isoto M^\vee \otimes M^{\otimes 2}$ (square periodicity) and of the identification of the dualities with values respectively in $M^{\otimes 2}\otimes L_1$ and in $L_2$ using $\psi$.

When $\CS$ is a scheme in $\SmPic_\BS$, we also use a relative
notion. An {\em $\BS$-alignment} from $L_1$ to $L_2$ is an alignment
$A:\pi_\CS^*K \otimes L_1 \alto L_2$ for some line bundle $K$
over~$\BS$. We denote this by $A:L_1 \alKto{K} L_2$.
\end{defi}

\begin{defi}[{\cite[Definition 2.5]{Balmer11}}] \label{laxsim_defi}
Two Witt classes $w_1 \in \Wcoh^j(\CS,L_1)$ and $w_2\in
\Wcoh^j(\CS,L_2)$ are {\em \lax-similar} if there exists an
alignment $A$ such that $w_2=\alis{A}(w_1)$. This is an equivalence
relation written $w_1\weq w_2$. Note also that $w_1\weq w_2$ forces $[L_1]=[L_2]$ in $\Pic(\CS)/2$.
\end{defi}

\begin{defi}[{\cite[Definition 3.4]{Balmer11}}] \label{laxpull_defi}
Let $f:\bar \CS \to \CS$, let $L$ be a line bundle on $\CS$ and let
$\bar L$ be a line bundle on~$\bar \CS$. If an alignment $\bar
A:f^*L \alto \bar L$ exists, we define a {\em \lax\ pull-back}
$$
\alis{\bar A}\circ
f^*\,:\quad\Wcoh^*_{\!Z}(\CS,L)\too\Wcoh^*_{f\inv Z}(\bar \CS,f^* L)
\isotoo\Wcoh^*_{f\inv Z}(\bar \CS,\bar L)\,.
$$
\end{defi}

\begin{rema} \label{laxpull_rema}
It is easy to see that two \lax\ pull-backs (along the same
morphism) of \lax-similar elements are \lax-similar.
\end{rema}

\begin{defi}[{\cite[Definition 3.5]{Balmer11}}] \label{laxpush_defi}
Similarly, if an alignment $\bar A:\bar L \alto \can_f \otimes f^*L$
exists, we define a {\em \lax\ push-forward}
$$
f_*\circ\alis{\bar A}\,:\quad
\Wcoh^{*+d}_{\!\bar Z}(\bar \CS,\bar{L})\isotoo
\Wcoh^{*+d}_{\!\bar Z}(\bar \CS,\can_f\otimes f^*
L)\too\Wcoh^*_{\!f\bar Z}(\CS,L)\,.
$$
\end{defi}

The freedom in the use of \lax\ push-forwards is summarized in the
following
\begin{theo} \label{laxpush_thm}
Let $f:\bar{\CS} \to \CS$ be a morphism of schemes in~$\SmPic_\BS$
and let $\bar{L}$ be a line bundle over~$\bar{\CS}$.
\begin{enumerate}[\indent(a)]
\smallbreak
\item\label{it:push-from}
A \lax\ push-forward starting from $\Wcoh^*(\bar{\CS},\bar{L})$
exists if and only if
\begin{equation} \label{genpushfromexists_eq} [\bar{L}] \in {\rm
Im}\big([\can_f] \otimes f^*: \Pic(\CS)/2 \to \Pic(\bar{\CS})/2\big)
\end{equation}
or equivalently replacing $\Pic(-)/2$ by $\Pic_\BS(-)/2$.
\smallbreak
\item Assuming~\eqref{genpushfromexists_eq} holds, the \lax\ push-forward can be chosen to land in $\Wcoh^*(\CS,L)$ for a line bundle $L$ on $\CS$ if and only if
\begin{equation} \label{genpushtoexists_eq}
[\can_f \otimes f^*L] = [\bar{L}] \in \Pic(\bar{\CS})/2.
\end{equation}
\smallbreak
\item\label{it:push-unique}
Given two line bundles $L_1$ and $L_2$ on $\CS$ both satisfying \eqref{genpushtoexists_eq}, \lax\ push-forwards from $\Wcoh^*(\bar{\CS},\bar{L})$ to $\Wcoh^*(\CS,L_1)$ and to $\Wcoh^*(\CS,L_2)$ are \lax-similar on~$\CS$ (\ie there exists an alignment $\alis{A}:\Wcoh^*(\CS,L_1)\to \Wcoh^*(\CS,L_2)$ turning one push-forward into the other) if and only if
\begin{equation}
[L_1]=[L_2] \in \Pic(\CS)/2
\end{equation}
or equivalently replacing $\Pic(-)/2$ by $\Pic_\BS(-)/2$. This condition is automatically satisfied if $f^*:\Pic_\BS(\CS)/2\to \Pic_\BS(\bar{\CS})/2$ is injective, so in that case, there is no need to specify the specific target of the \lax\ push-forward if one is only interested in \lax-similitude classes of the images.
\end{enumerate}
\end{theo}
\begin{proof}
This is detailed in~\S\,4 of~\cite{Balmer11}. The first two parts are straightforward from~\eqref{push-forward_eq}. Use~\cite[Lemma~4.3\,(d)]{Balmer11} to replace $\Pic(-)/2$ by $\Pic_\BS(-)/2$ in~\eqref{it:push-from}. To replace $\Pic(-)/2$ by $\Pic_\BS(-)/2$ in the last statement, use~\cite[Lemma~4.3\,(c)]{Balmer11}, in which case~\cite[Proposition~4.7]{Balmer11} gives that the images are \lax-similar.
\end{proof}

\begin{rema} \label{simpushpull_rema}
Remark~\ref{laxpull_rema} and Theorem~\ref{laxpush_thm} mean
that as long as one is only interested in elements up to
\lax-similarity, there is no need to be specific about where \lax\
pull-backs and push-forwards start and land, as long as they exist. One only needs to keep track of classes of line bundles in $\Pic_\BS(-)/2$. See also \cite[Remark~2.10]{Balmer11} for connecting homomorphisms.
\end{rema}

Let $\CS$ be a scheme in $\SmPic_\BS$ and let $Z$ be a closed subset
of $\CS$. Let $\Set$ be a set. Given a family of line bundles
$(L_\varS)_{\varS\in\Set}$ and a class $p\in \Pic_\BS(\CS)/2$, let
$\Set_p$ denote the subset of those $\varS\in\Set$ such that
$[L_\varS]=p$. Let $(w_\varS)_{\varS\in\Set}$ be a family of
Witt classes, for various shifts and twists\,: $w_\varS \in
\Wcoh^{j_\varS}(\CS,L_\varS)$.

\begin{defi} \label{lincomb}
Let $L$ be a line bundle on $\CS$ and let $k\in \bbZ$ be an integer.
Let $\FSet\subset\Set_{[L]}$ be a finite subset. Given a family of
$\BS$-alignments $(A_\varF:L_\varF \alKto{K_\varF} L)_{\varF\in\FSet}$ and a family of coefficients
$(\lambda_\varF)_{\varF\in\FSet}$ with $\lambda_\varF \in
\Wcoh^{k-j_\varF}(\BS,K_\varF)$, we can form the {\em linear
combination}
$$
\sum_{\varF\in\FSet}\lambda_\varF \dt{A_\varF} w_\varF\,
:=\,\sum_{\varF\in\FSet}
\alis{A_\varF}\big(\pi_\CS^*(\lambda_\varF)\cdot w_\varF\big),
$$
which is an element of $\Wcoh^k(\CS,L)$. See details in~\cite[\S\,6]{Balmer11}.
\end{defi}

\begin{defi}
\label{tot-basis_defi}%
Let $P \subseteq \Pic_\BS(\CS)/2$ -- typically $P$ is the whole $\Pic_{\BS}(\CS)/2$. The family $(w_\varS)_{\varS\in\Set}$ is called a \emph{total basis} of the $P$-part of the Witt groups of $\CS$ over~$\BS$ with support in~$Z$, if $[L_\varF]\in P$ for all $\varF$ and if for every line bundle $L$ with $[L]\in P$, we have the following two properties\,:
\begin{enumerate}
\item \emph{Total generation}\,: Any element in $\Wcoh^k(\CS,L)$ can be obtained as a linear combination of a finite subfamily $(w_\varF)_{\varF\in\FSet}$, \ie alignments and coefficients as in Definition~\ref{lincomb} can be found yielding the element as the linear combination.
\item \emph{Total independence}\,: Any linear combination of the $w_\varF$ yielding the zero element in $\Wcoh^k(\CS,L)$ has zero coefficients.
\end{enumerate}
\end{defi}

A total basis yields the following result, expressed in classical terms.

\begin{theo}[{\cite[Proposition
6.9]{Balmer11}}]\label{thm:classic-basis}%
For every line bundle $L$ with $[L]\in P$, every $k\in \bbZ$ and
\emph{for every choice}, for those $\varS\in \Set_{[L]}$, of a line
bundle $K_\varS$ over $\BS$ and a $K_\varS$-alignment
$C_\varS:L_\varS\alKto{K_\varS}L$, the following map is an
isomorphism
\begin{equation}
\label{eq:iso}%
\begin{array}{ccc}
\displaystyle\theta=\theta(C_\smallbullet)\,:\hspace{-1ex}\
\bigoplus_{\varS\in\Set_{[L]}}\hspace{-1ex}
\Wcoh^{k-j_\varS}(\BS,K_\varS) & \isotoo & \Wcoh^k_{\!Z}(\CS,L)
\\
(x_\varS)_{\varS\in\Set_{[L]}} \kern-10em & \longmapsto & \sum
x_\varS \dt{C_\varS}w_\varS\,.
\end{array}
\end{equation}
\end{theo}

\begin{rema}
\label{lax-basis_rema}%
In the same spirit, if one replaces elements of a total basis by
\lax-similar ones (Def.\,\ref{laxpull_defi}), they still form a
total basis. See~\cite[Cor.\,6.14]{Balmer11}.
\end{rema}

Finally, here is a way to keep track of total bases along
localization sequences. This will be our main tool to construct
inductively total bases for Grassmann varieties, together with
homotopy invariance and d\'evissage, under which total bases are
naturally preserved (see~\cite[Corollaries 6.13 and
6.16]{Balmer11} for precise statements).

Let $U$ be the open complement of a closed subset $Z\subset \CS$, and let $\upsilon: U \hook \CS$ be the
corresponding open embedding. Assume $U \in \SmPic_\BS$. Let $\ext: \Wcoh^*_{\!Z}(\CS,L) \to
\Wcoh^*(\CS,L)$ be the extension of support map. Recall from~\cite{Balmer00} that there is a long exact sequence of localization
\begin{equation}
\label{eq:LES}%
\cdots \too \Wcoh^i_{\!Z}(\CS,L) \tooby{\ext} \Wcoh^i(\CS,L)
\tooby{\upsilon^*} \Wcoh^i(U,\upsilon^*L)\tooby{\bord}
\Wcoh^{i+1}_{\!Z}(\CS,L) \to \cdots
\end{equation}
\begin{theo}[{\cite[Thm.\,7.1]{Balmer11}}]
\label{thm:localization}%
Let $P$ be a subset of $\Pic_\BS(\CS)/2$. Assume that the restriction
$\upsilon^*_{|P}:P \to \Pic_\BS(U)/2$ is injective and let
$P_U=\upsilon^*(P) \subset \Pic_\BS(U)/2$.

Let $\SetZ$, $\SetX$ and $\SetU$ be sets and let $(w'_i)_{\varSZ\in
\SetZ}$ and $(w_\varSX)_{\varSX\in \SetX}$ be Witt classes on~$\CS$, let $(v_\varSZ)_{\varSZ\in \SetZ}$ and
$(v'_\varSU)_{\varSU\in \SetU}$ be Witt classes
on\,~$\CS$ with support in $Z$ and let $(u_\varSU)_{\varSU\in
\SetU}$ and $(u'_\varSX)_{\varSX\in \SetX}$ be Witt
classes on\,~$U$, whose line bundles are restricted from $\CS$. Suppose the following conditions hold
(see Figure~\ref{fig:uvw})\,:
\begin{enumerate}[\indent(a)]
\item \label{eweq_item} for every $\varSZ\in \SetZ$, we have \lax-similitude $\ext(v_\varSZ)\weq w'_\varSZ$
\item \label{upsilonweq_item} for every $\varSX\in \SetX$, we have \lax-similitude $\upsilon^*(w_\varSX)\weq u'_\varSX$
\item \label{bordweq_item} for every $\varSU\in \SetU$, we have \lax-similitude $\bord(u_\varSU)\weq v'_\varSU$\,.
\end{enumerate}
Then, the following properties are satisfied\,:
\begin{enumerate}[\indent(1)]
\item \label{upsilonai_item} for every $\varSZ\in \SetZ$, we have
$\upsilon^*(w'_\varSZ)=0$;
\item \label{bordci_item} for every $\varSX\in \SetX$, we have $\bord(u'_\varSX)=0$;
\item \label{ebi_item} for every $\varSU\in \SetU$, we have $\ext(v'_\varSU)=0$.
\item \label{basisinduction_item} If, furthermore,
\begin{enumerate}[\indent(i)]
\item the $(v_\varSZ)_{\varSZ\in \SetZ}$ and $(v'_\varSU)_{\varSU\in \SetU}$ form together a total basis of the $P$-part of the Witt groups of\,~$\CS$ with support in $Z$, over~$\BS$,
\item the $(u_\varSU)_{\varSU\in \SetU}$ and $(u'_\varSX)_{\varSX\in \SetX}$ form together a total basis of the $P_U$-part of the Witt groups of\,~$U$, over~$\BS$,
\end{enumerate}
then the $(w'_\varSZ)_{\varSZ\in \SetZ}$ and $(w_\varSX)_{\varSX\in
\SetX}$ form together a total basis of the $P$-part of the Witt
groups of\,~$\CS$, over~$\BS$.
\end{enumerate}
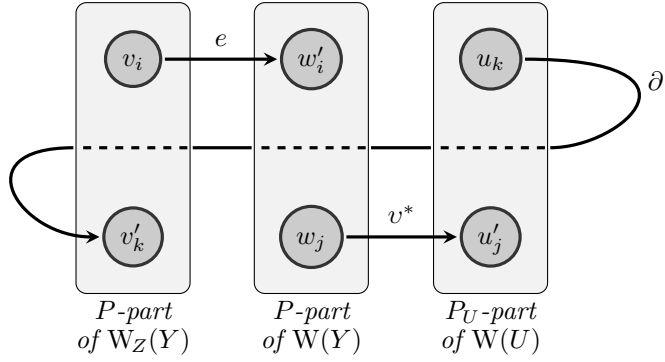
\begin{figure}[h!]
\begin{tikzpicture}[>=stealth,shorten >=1pt,shorten <=1pt,text height=1.5ex,text depth=.25ex,on grid,semithick,mynode/.style={circle,fill=black!20,very thick,draw=black!80,text=black}]
\matrix[row sep=5ex,column sep=10ex] {
\node (b)  [mynode] {$v_\varSZ$}; & \node (a)  [mynode] {$w'_\varSZ$}; & \node (c)  [mynode] {$u_\varSU$}; \\
\coordinate (x);  & \coordinate (y); & \coordinate (z); \\
\node (b') [mynode] {$v'_\varSU$}; & \node (w') [mynode] {$w_\varSX$}; & \node (c') [mynode] {$u'_\varSX$}; \\
};
\path (b)++(0.75,0.75) coordinate (b+);
\path (a)++(0.75,0.75) coordinate (a+);
\path (c)++(0.75,0.75) coordinate (c+);
\begin{pgfonlayer}{background}
\draw[fill=black!10,fill opacity=0.5,rounded corners] (b')+(-0.75,-0.75) rectangle (b+);
\draw[fill=black!10,fill opacity=0.5,rounded corners] (w')+(-0.75,-0.75) rectangle (a+);
\draw[fill=black!10,fill opacity=0.5,rounded corners] (c')+(-0.75,-0.75) rectangle (c+);
\end{pgfonlayer}
\draw[very thick,->] (b) -- node[above]{$e$} (a);
\draw[very thick,->] (w') -- node[above]{$\upsilon^*$}(c');
\path (x)++(-0.75,0) coordinate (x-);
\path (x)++(0.75,0) coordinate (x+);
\path (y)++(-0.75,0) coordinate (y-);
\path (y)++(0.75,0) coordinate (y+);
\path (z)++(-0.75,0) coordinate (z-);
\path (z)++(0.75,0) coordinate (z+);
\draw[very thick] (c) .. controls +(3,0) and +(1,0) .. node[above right] {$\bord$} (z+);
\draw[very thick,dashed] (z+) -- (z-);
\draw[very thick] (z-) -- (y+);
\draw[very thick,dashed] (y+) -- (y-);
\draw[very thick] (y-) -- (x+);
\draw[very thick,dashed] (x+) -- (x-);
\draw[very thick,->] (x-) .. controls +(-1.5,0) and +(-1.5,0) .. (b');
\draw (b')+(0,-1) node (pz) {$P$-part};
\draw (pz)+(0,-2.5ex) node {of $\Wcoh_{\!Z}(\CS)$};
\draw (w')+(0,-1) node (px) {$P$-part};
\draw (px)+(0,-2.5ex) node {of $\Wcoh(\CS)$};
\draw (c')+(0,-1) node (pu) {$P_U$-part};
\draw (pu)+(0,-2.5ex) node {of $\Wcoh(U)$};
\end{tikzpicture}
\caption{\label{fig:uvw}%
Families mapping to each other up to \lax-similitude in
Theorem~\ref{thm:localization}. No arrow means mapped to zero.}
\end{figure}
\end{theo}

Finally, we will need the following fact about push-forwards along blow-up\,:

\begin{prop}
\label{push-1_prop}%
Let $X$ be a quasi-compact and quasi-separated scheme (\eg\ an affine or a noetherian scheme) and let $Z\hookrightarrow X$ be a regular immersion of pure codimension~$d$. Let $\pi:\Bl\to X$ be the blow-up of $X$ along~$Z$. Then
\begin{enumerate}
\item There is a natural isomorphism $R\pi_*(\cO_\Bl)\cong \cO_X$ in
the derived category of~$X$.
\item Assume further that $X$ is regular and that $\can_{\Bl/X}$ is
a square, which happens exactly when $d$ is odd by~\cite[Proposition
A.11 (iii)]{Balmer09}. Then a \lax\ push-forward (in the sense of
Definition~\ref{laxpush_defi})
$\Wcoh^0(\Bl,\cO_{\Bl})\to\Wcoh^0(X,\cO_X)$ maps the unit class
$1_\Bl$ to an element \lax-similar to the unit class $1_X$.
\end{enumerate}
\end{prop}

\begin{proof}
Part~(a) can be found in SGA~6, see~\cite[Lemme~VII.3.5, p.\,441]{SGA6}
or the more recent account in~\cite[Lemme~2.3\,(a)]{Thomason93}.

Part (b) follows from~(a) and holds at the level of symmetric forms
already, before taking Witt classes. Indeed, when $d=1$, we have
$\Bl=X$ and there is nothing to prove. When $d\geqslant3$ then line
bundles over~$X$, and homomorphisms between them, are determined by
their restriction to the open complement~$U=X\smallsetminus Z$
of~$Z$ since $Z$ is of codimension at least~2. The result follows by
the base-change formula for push-forwards~\cite[Thm. 5.5]{Calmes11}
and by Theorem~\ref{laxpush_thm}, since
$\pi\restr{\pi^{-1}(U)}:\pi^{-1}(U)\to U$ is an isomorphism.
\end{proof}


\section{Construction of the total basis}
\label{gen-Witt_sec}%


For this section, let $\Lambda$ be a Young diagram in \deframe\ and recall the $k$-tuples $\multi{d}$ and $\multi{e}$ associated to $\Lambda$ in Definition~\ref{de_def}.

\begin{rema}
\label{cond-push_rem}%
Our goal is to construct classes in the total Witt group of $\Grass_X(d,\cV)$ by \lax-pushing-forward the unit form $1\in\Wper(\Flag_X(\Lambda))=\Wcoh^0(\Flag_X(\Lambda),\cO)$ along the morphism $\ff_\Lambda:\Flag_X(\Lambda)\to \Grass_X(d,\cV)$ of~\eqref{ff_eq}. As recalled in Theorem~\ref{laxpush_thm}\,\eqref{it:push-from}, this \lax\ push-forward only exists conditionally, namely only when the class of the relative canonical bundle $\can_{\Flag_X(\Lambda)/\Grass_X(d,\cV)}$ in $\Pic_X(\Flag_X(\Lambda))/2$ belongs to the image of
$$
(\ff_\Lambda)^*\,:\
\Pic_X(\Grass_X(d,\cV))/2\too\Pic_X(\Flag_X(\Lambda))/2\,.
$$
This is true if and only if the following conditions are satisfied:
\begin{enumerate}
\item $d_i-d_{i-1}+e_{i+1}-e_{i}$ is even for every $i=2,\ldots,k-1$
(for $k\geqslant3$)
\item when $0<e_1<e$ and $k\geq2$, require moreover $d_1+e_2-e_1$ even.
\end{enumerate}

We shall be more precise in Proposition~\ref{cond-even_prop} below but the reader can verify our claim using~\eqref{can-Flag_eq} in Corollary~\ref{can-Flag_cor}. For this, note that $\Det_{d_1}=[\det\cV_{d_1}]$ comes from $X$ when $e_1=0$ and that $\Det_d$ always comes from $\Grass_X(d,\cV)$ since $(\ff_\Lambda)^*\big(\Det_d^{\Grass_X(d,\cV)}\big)=\Det_d^{\Flag_X(\Lambda)}$, as can be checked on the tautological bundles already.

Conditions~(a) and (b) hold in particular when $\Lambda$ is \emph{even} in the sense of Definition~\ref{even_def}. Indeed, for such $\Lambda$ not only the sum
$d_i-d_{i-1}+e_{i+1}-e_{i}$ is even but actually both terms
$d_i-d_{i-1}$ and $e_{i+1}-e_{i}$ are. Compare Figure~\ref{conditions_fig}.

\begin{figure}[!ht]
\begin{center}
\begin{tikzpicture}[y=-1cm]

\path[draw=black,thick,fill=black!50] (5.6,3.8) -- (6,3.8) -- (6,2.6) -- (6.4,2.6) -- (6.4,2.2) -- (6.8,2.2) -- (6.8,1) -- (5.6,1);

\draw[thick,arrows=stealth-stealth,black] (6.3,2.2) -- (6.3,2.5) -- (6,2.5);
\draw[thick,arrows=stealth-stealth,black] (6.7,1) -- (6.7,2.1) -- (6.4,2.1);
\draw[thick,arrows=stealth-stealth,black] (5.9,2.6) -- (5.9,2.9) -- (5.9,3.8);
\draw[thick,black] (5.6,1) rectangle (7.6,3.8);
\path (6.3,2.8) node[text=black,anchor=base west] {$2$};
\path (6,3.3) node[text=black,anchor=base west] {$3$};
\path (6.7,2.4) node[text=black,anchor=base west] {$4$};

\end{tikzpicture}
\end{center}
\caption{Framed Young diagram satisfying Conditions~(a) and~(b) of
Remark~\ref{cond-push_rem} but which is not even (at all).}
\label{conditions_fig}%
\end{figure}
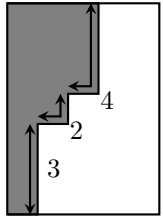

When (a) and (b) hold (\eg\ for $\Lambda$ even), there exists a line
bundle $L$ on $\Grass_X(d,\cV)$ such that $[\can_{\ff_\Lambda}]\cdot
\ff_\Lambda^*[L]=1$ in $\Pic(\Flag_X(\Lambda))\big/2$. Therefore,
there is a \lax\ push-forward~\ref{laxpush_defi} along
$f=\ff_\Lambda$ \ie a homomorphism\,:
$$
\Wcoh^{0}\big(\Flag(\Lambda),\cO\big)\simeq
\Wcoh^{0}\big(\Flag(\Lambda),\can_{\ff_\Lambda}\otimes \ff_\Lambda^*L\big)
\tooo{2}^{(\ff_\Lambda)_*}\Wcoh^{|\Lambda|}\big(\Grass_X(d,\cV),L\big)\,.
$$
(Use that
$\dim(\ff_\Lambda)=\dim\Flag_X(\Lambda)-\dim\Grass_X(d,\cV)=-|\Lambda|$
by Propositions~\ref{Pic-Grass_prop} and~\ref{Pic-Flag_prop}.)
Consequently, we can produce a Witt class over $\Grass_X(d,\cV)$, by
pushing  the unit form in the first group. This is what we are going
to do below for $\Lambda$ even, making the class of $L$ in
$\Pic(\Grass_X(d,\cV))/2$ more explicit in terms of the
diagram~$\Lambda$. Once this class of $L$ in $\Pic(\Grass_X(d,\cV))/2$ or in $\Pic_X(\Grass_X(d,\cV))/2$ is fixed, the choices involved in the \lax\ push-forward of
Definition~\ref{laxpush_defi} will be irrelevant. A nice fact is that this class in the Picard group can be read very easily on the diagram, as we now explain.
\end{rema}

\begin{rema}
\label{perim_rem}%
The perimeter of a Young diagram $\Lambda$ is an even integer.
Indeed, from the lower-left corner of $\Lambda$ to its upper-right
corner, there are two paths which follow the boundary (the upper
path and the lower path) and they have the same length, namely the
lattice distance between these two corners.
\end{rema}

\begin{defi}
\label{twist_def}%
Let $\Lambda$ be a Young diagram. We define $\twist(\Lambda)\in\bbZ/2$ to be the class of half the perimeter of~$\Lambda$. From the above remark, $\twist(\Lambda)$ is also the class of the (lattice) distance from the lower-left corner of $\Lambda$ to its upper-right corner. That is\,:
$$
\twist(\Lambda)=[\Lambda_1+\row(\Lambda)]\in\bbZ/2
$$
where $\row(\Lambda)$ is the number of non-zero rows of $\Lambda$ (Def.\,\ref{zerow_def}). Note that this Definition does not depend on an ambient frame.
\end{defi}

\begin{rema}
On an even Young diagram~$\Lambda$ in \deframe, there is another way to read $\twist(\Lambda)\in\bbZ/2$ on the diagram. Add the (parity of) the length of the segments where $\Lambda$ touches the right and the bottom of the frame. See Figure~\ref{twist_fig}. This is justified and generalized in Proposition~\ref{twist_prop}.
\end{rema}

\begin{figure}[!ht]
\begin{center}
\begin{tikzpicture}[y=-1cm]

\path[draw=black,thick,fill=black!50] (-0.8,2.6) -- (0,2.6) -- (0,1.8) -- (0.8,1.8) -- (0.8,1.4) -- (-0.8,1.4) -- cycle;
\draw[thick,arrows=stealth-stealth,black] (0.9,1.4) -- (0.9,1.8);
\draw[thick,black] (-0.8,1.4) rectangle (0.8,3);
\path (0.9,1.7) node[text=black,anchor=base west] {$1$};
\path[draw=black,thick,fill=black!50] (1.6,3) -- (2.4,3) -- (2.4,1.4) -- (1.6,1.4) -- (1.6,2.2);
\draw[thick,black] (1.6,1.4) rectangle (3.2,3);
\draw[thick,arrows=stealth-stealth,black] (1.6,3.1) -- (2.4,3.1);
\path (1.8,3.4) node[text=black,anchor=base west] {$2$};
\draw[thick,black] (4,1.4) rectangle (5.2,2.6);
\path[draw=black,thick,fill=black!50] (4.4,2.6) -- (4.4,1.8) -- (5.2,1.8) -- (5.2,1.4) -- (4,1.4) -- (4,2.6);
\draw[thick,arrows=stealth-stealth,black] (4,2.7) -- (4.4,2.7);
\draw[thick,arrows=stealth-stealth,black] (5.3,1.4) -- (5.3,1.8);
\path (4,3.05) node[text=black,anchor=base west] {$1$};
\path (5.3,1.7) node[text=black,anchor=base west] {$1$};
\path[draw=black,thick,fill=black!50] (6,1.4) rectangle (8,3);
\draw[thick,arrows=stealth-stealth,black] (6,3.1) -- (8,3.1);
\draw[thick,arrows=stealth-stealth,black] (8.1,1.4) -- (8.1,3);
\path (6.9,3.4) node[text=black,anchor=base west] {$5$};
\path (8.1,2.3) node[text=black,anchor=base west] {$4$};
\draw[thick,black] (-3.2,1.4) rectangle (-1.6,3);
\path (-3.1,4.1) node[text=black,anchor=base west] {$\twist(\Lambda)=0$};
\path (-0.7,4.1) node[text=black,anchor=base west] {$\twist(\Lambda)=1$};
\path (3.9,4.1) node[text=black,anchor=base west] {$\twist(\Lambda)=0$};
\path (1.7,4.1) node[text=black,anchor=base west] {$\twist(\Lambda)=0$};
\path (6.3,4.1) node[text=black,anchor=base west] {$\twist(\Lambda)=1$};
\end{tikzpicture}
\end{center}
\caption{Class $\twist(\Lambda)\in\bbZ/2$, for different $\Lambda$.}
\label{twist_fig}%
\end{figure}
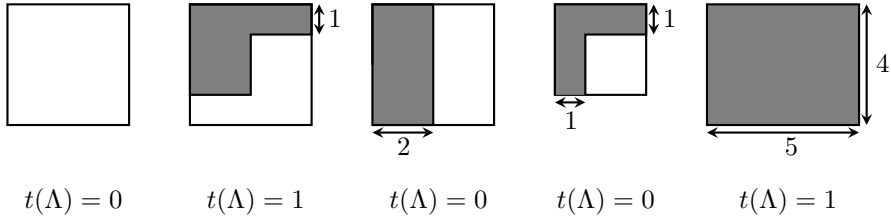

\begin{prop}
\label{twist_prop}%
Let $\Lambda$ be an even Young diagram in \deframe\ and let $\multi{d},\multi{e}$ be the associated $k$-tuples (Def.\,\ref{de_def}). Then $\twist(\Lambda)=[d_i+(e-e_j)]\in\bbZ/2$ for any $i,j\in\{1,\ldots,k\}$ such that $e_i<e$ (only $i=k$ should be avoided when $e_k=e$).
\end{prop}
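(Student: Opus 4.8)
The plan is to rewrite every term in sight as a parity of the integers $d_i$ and $e_i$ and then feed in the evenness conditions of Definition~\ref{even_def}. To begin, I would record the elementary identity $\Lambda_{d_i}=e-e_i$ that is built into Definition~\ref{de_def}; in particular $\Lambda_1=\Lambda_{d_1}=e-e_1$ and $\Lambda_d=\Lambda_{d_k}=e-e_k$, so that $\Lambda_d>0$ is equivalent to $e_k<e$. By Definition~\ref{twist_def} the task is to compare $\twist(\Lambda)=[\Lambda_1+\row(\Lambda)]$ with $[d_i+(e-e_j)]$, and by Definition~\ref{zerow_def} the term $\row(\Lambda)$ equals $d_k=d$ when $\Lambda_d>0$ and equals $d_{k-1}$ when $\Lambda_d=0$ (with $\row(\vide)=0=d_0$ as the degenerate instance of the latter).

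Next I would show that the right-hand side $[d_i+(e-e_j)]$ is independent of the admissible $i$ and of $j$. For $j$: condition~(ii) makes $e_{i+1}-e_i$ even for $1\leqslant i\leqslant k-1$, so all $e_j$ share a common parity and $[e-e_j]$ is constant. For $i$: condition~(i) makes $d_1,\ldots,d_{k-1}$ of one common parity; and if $e_k<e$ then either $k=1$, or $e_k>0$ (since $e_1<\cdots<e_k$ forces $e_k\geqslant k-1$) and condition~(iv) then makes $d_k-d_{k-1}$ even as well. Hence, as long as $i$ ranges over the indices with $e_i<e$ — i.e.\ all of $\{1,\ldots,k\}$ except $i=k$ in the case $e_k=e$ — the parity of $d_i$ is the same for all such $i$. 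So $[d_i+(e-e_j)]\in\bbZ/2$ is a single well-defined class, and it suffices to verify the asserted equality for one convenient choice of $(i,j)$.

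For the identification I would split into the two cases. If $\Lambda_d>0$, then $e_k<e$, so $i=k$ is admissible, and taking $(i,j)=(k,1)$ gives $[d_k+(e-e_1)]=[\Lambda_1+\row(\Lambda)]=\twist(\Lambda)$. If $\Lambda_d=0$, then $e_k=e$; if moreover $\Lambda=\vide$ the only index $k$ is the forbidden one and there is nothing to prove, so assume $k\geqslant2$. Then condition~(ii) gives $e_j\equiv e_k=e$ for every $j$, so $e-e_j$ is even, and likewise $e-e_1$ is even; hence on one side $[d_i+(e-e_j)]=[d_i]=[d_{k-1}]$ for every admissible $i$ (necessarily $i\leqslant k-1$) by the previous paragraph, while on the other side $\twist(\Lambda)=[\Lambda_1+\row(\Lambda)]=[(e-e_1)+d_{k-1}]=[d_{k-1}]$, and the two agree.

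I expect the whole argument to be routine bookkeeping; the one point that needs care is the case $e_k=e$, where the naive formula $[d_k+(e-e_1)]$ is unavailable — its index $k$ is exactly the one excluded — and one must simultaneously substitute $\row(\Lambda)=d_{k-1}$ for $d_k$ and use condition~(ii) to see that the twist contribution $e-e_j$ is then forced to vanish mod~$2$. It is worth noting that condition~(iii) of Definition~\ref{even_def} plays no role in this proposition.
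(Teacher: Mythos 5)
Your proof is correct, and at bottom it is the same parity bookkeeping the paper carries out, just presented algebraically (via the formula $\twist(\Lambda)=[\Lambda_1+\row(\Lambda)]$ and the identity $\Lambda_{d_i}=e-e_i$) rather than geometrically (tracing the lower boundary path of $\Lambda$ and observing that the only segments of possibly odd length are the two that lie on the outer frame). The paper anchors at $(i,j)=(1,k)$ and then invokes evenness to propagate; you first prove independence of the admissible $(i,j)$ and then anchor at $(k,1)$ or $(k-1,\,\cdot\,)$ — same two ingredients in the opposite order, with the case $e_k=e$, $k\geqslant 2$ handled cleanly. Your observation that condition (iii) is not needed is also correct and a nice touch.
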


\begin{proof}
Measure the half-perimeter of $\Lambda$ as the length of the lower boundary of~$\Lambda$, from the lower-left corner of $\Lambda$ to its upper-right corner (see Rem.\,\ref{perim_rem}).
Since $\Lambda$ is even, all segments on this lower half-perimeter which are not on the outside frame have even length.
So, the only two segments to contribute to the lower half-perimeter with possible odd length, are on the outside \deframe, \ie\,:
\begin{itemize}
\item the vertical segment most to the right, which has length $d_1$, and
\item the lowest horizontal segment, which has length $e-e_k$ when $\Lambda$ touches the lower part of the \deframe\ (otherwise $e_k=e$ and this length is even).
\end{itemize}
In any case, this shows that $\twist(\Lambda)=[d_1+(e-e_k)]\in\bbZ/2$, that is, the announced formula for $i=1$ and $j=k$.
The other formulas follow from this one since by Definition~\ref{even_def} the successive differences $d_i-d_{i-1}$ and $e_{j+1}-e_j$ are even for all $i=2,\ldots,k-1$, for all $j=1,\ldots,k-1$, and also for $i=k$ when $e_k<e$.
\end{proof}

\begin{defi}
\label{Twist_def}%
Let $\Lambda$ be an even Young diagram in
\deframe. We define \emph{the twist $\Twist(\Lambda)$ of
$\Lambda$} as the following class in $\Pic(\Grass_X(d,\cV))/2=\Pic(X)/2\ \oplus\ \bbZ/2\cdot \Det_d$ (see Corollary~\ref{Pic-Grass_coro} and recall that $\Det_d$ is the determinant of the tautological bundle)\,:
$$
\Twist(\Lambda)=\Twist(\Lambda,d,e):=[\det\cV]^{\row(\Lambda)}\cdot \Det_d^{\twist(\Lambda)}\,,
$$
where we recall that $\twist(\Lambda)$ is the half-perimeter
of~$\Lambda$ modulo~2 (Def.\,\ref{twist_def}) and that
$\row(\Lambda)$ is the number of non-zero rows of~$\Lambda$
(Def.\,\ref{zerow_def}).
\end{defi}

\begin{rema}
The important part of $\Twist(\Lambda)$ is of course
$\Det_d^{\twist(\Lambda)}$, which is not coming from the base~$X$.
Also, when $\cV$ is trivial, the other term disappears anyway; this
holds in particular with $X=\Spec(R)$ for a local ring~$R$ (\eg\ a field).
\end{rema}

\begin{prop}
\label{cond-even_prop}%
Let $\Lambda$ be an even Young diagram in \deframe. Then
$$
[\can_{\ff_\Lambda}]\cdot\ff_\Lambda^*\big(\Twist(\Lambda)\big)=1
\quad\text{in}\quad
\Pic\big(\Flag_X(\Lambda)\big)\big/2\,.
$$
\end{prop}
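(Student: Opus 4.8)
The plan is to reduce everything to the explicit formula~\eqref{can-Flag_eq} of Corollary~\ref{can-Flag_cor} plus a parity count. I would work throughout in $\Pic(\Flag_X(\Lambda))/2$, decomposed by Proposition~\ref{Pic-Flag_prop} into $\Pic(X)/2$ and the free part on the ``new'' generators $\Det_{d_i}$ (those with $e_i\neq0$). Substituting $\can_{\ff_\Lambda}=\can_{\Flag_X(\Lambda)/\Grass_X(d,\cV)}$ from~\eqref{can-Flag_eq}, I then compute $\ff_\Lambda^*(\Twist(\Lambda))$ from Definition~\ref{Twist_def}, using $\ff_\Lambda^*(\det\cV)=\det\cV$ and $\ff_\Lambda^*\big(\Det_d^{\Grass_X(d,\cV)}\big)=\Det_{d_k}^{\Flag_X(\Lambda)}$ (both recalled in Remark~\ref{cond-push_rem}). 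It suffices to check that, expressed in terms of the $\det\cV_r$ and the $\Det_{d_i}$, every coefficient of $\can_{\ff_\Lambda}+\ff_\Lambda^*(\Twist(\Lambda))$ is even. The case $k=1$ I would settle directly from the last line of~\eqref{can-Flag_eq}: for $\Lambda=\varnothing$ the sum is $0$; for $\Lambda=\rect{d}{e}$ it is $2d\,\det\cV$; and for $0<e_1<e$, condition~(iii) of Definition~\ref{even_def} makes $d$ even so the $\det(\cV/\cV_{d+e_1})$-term dies, while $\twist(\Lambda)=[e-e_1]$ cancels the $\Det_d$-coefficient $e_1-e$. So assume $k\geqslant2$.

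For the ``new'' coefficients: for $i<k$ only $\can_{\ff_\Lambda}$ contributes, with coefficient $d_i-d_{i-1}+e_i-e_{i+1}$, even by conditions~(i)--(ii) of Definition~\ref{even_def} when $2\leqslant i\leqslant k-1$, and equal to $d_1-(e_2-e_1)$ when $i=1$ (where $0<e_1<e$ since $k\geqslant2$), even by~(iii) and~(ii). The coefficient of $\Det_{d_k}$ is $(-d_{k-1}+e_k-e)+\twist(\Lambda)$; here I would invoke Proposition~\ref{twist_prop} with the indices $k-1$ and $k$ (legitimate as $e_{k-1}<e$) to write $\twist(\Lambda)=[d_{k-1}+(e-e_k)]$, so this coefficient is $\equiv0$.

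For the coefficients extended from~$X$: the subbundles $\cV_{d_1+e_1},\dots,\cV_{d_k+e_k}$ are pairwise distinct. When $e_1=0$ one has $d_1+e_1=d_1$, so the correction $\Det_{d_1}=\det\cV_{d_1}$ in~\eqref{can-Flag_eq} adds to the same coefficient as the $i=1$ summand of the first sum, giving total $e_1-e_2=-e_2$, even because $e_1=0$ and~(ii). For $2\leqslant i\leqslant k-1$ the coefficient of $\det\cV_{d_i+e_i}$ is $-(d_i-d_{i-1})$, even by~(i); for $i=1$ with $e_1\neq0$ it is $-d_1$, even by~(iii). For $i=k$ I would split cases: if $e_k<e$ then $\cV_{d_k+e_k}\neq\cV$, its coefficient $-(d_k-d_{k-1})$ is even by~(iv), while the coefficient of $\det\cV$ itself is $d_k+\row(\Lambda)=2d_k$ (here $\row(\Lambda)=d$); if $e_k=e$ then $\cV_{d_k+e_k}=\cV$ and the coefficient of $\det\cV$ collects $-d_k+d_{k-1}$, the explicit $d_k$, and $\row(\Lambda)=d_{k-1}$ from $\Twist(\Lambda)$ (since then $\Lambda_d=0$, Definition~\ref{zerow_def}), summing to $2d_{k-1}$.

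I expect the only genuinely delicate points to be these two ``boundary mergers''---$\Det_{d_1}$ with $\det\cV_{d_1}$ when $e_1=0$ (which needs $e_2$ even), and $\cV_{d_k+e_k}$ with $\cV$ when $e_k=e$ (where $\row(\Lambda)$ drops from $d$ to $d_{k-1}$ but a compensating even multiple of $\det\cV$ appears)---together with feeding Proposition~\ref{twist_prop} correctly into the $\Det_{d_k}$-coefficient; the remaining checks are routine parities against Definition~\ref{even_def}. Once every coefficient is seen to be even, $\can_{\ff_\Lambda}+\ff_\Lambda^*(\Twist(\Lambda))$ lies in $2\,\Pic(\Flag_X(\Lambda))$ and therefore vanishes modulo~$2$, which is the assertion.
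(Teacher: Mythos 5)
Your proposal is correct and takes essentially the same approach as the paper's own proof: substitute the explicit formula~\eqref{can-Flag_eq}, invoke Proposition~\ref{twist_prop} (with indices $k-1,k$) to kill the $\Det_{d_k}$-coefficient, and run a coefficient-by-coefficient parity check against Definition~\ref{even_def}, with the same two ``boundary mergers'' ($e_1=0$ and $e_k=e$) being the only non-routine points. The paper packages the computation a bit more compactly (it first strips $\can_{\ff_\Lambda}$ of visibly even terms, observes $d_1\det\cV_{d_1+e_1}+d_1\Det_{d_1}=0$ in one stroke, and then matches the remainder to $\Twist(\Lambda)$ by cases on $e_k$), but it is the same calculation. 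One small imprecision in your write-up: you justify ``$0<e_1<e$'' by ``$k\geqslant2$'', but $k\geqslant2$ only gives $e_1<e$; the inequality $e_1>0$ holds because you have restricted attention to the ``new'' generators $\Det_{d_i}$ with $e_i\neq0$. Since you treat the $e_1=0$ case separately in the next paragraph, this is only a slip in the stated reason, not in the argument.
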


\begin{proof}
Suppose first that $k=k(\Lambda)$ is at least~$2$. Remove
from~\eqref{can-Flag_eq} all even exponents coming from the fact
that $\Lambda$ is even and use Proposition~\ref{twist_prop} for
$i=k-1$ and $j=k$. This gives in
$\Pic\big(\Flag_X(\Lambda)\big)\big/2$\,:
$$
[\can_{\ff_\Lambda}] =
[\det\cV_{d_1+e_1}]^{d_1}
 \cdot [\det\cV_{d+e_k}]^{d+d_{k-1}} \cdot [\det\cV]^d
 \cdot \Det_{d_1}^{d_1} \cdot \Det_d^{\twist(\Lambda)}\,.
$$
Now observe that $[\det\cV_{d_1+e_1}]^{d_1}\cdot \Det_{d_1}^{d_1}=1$ in $\Pic\big(\Flag_X(\Lambda)\big)\big/2$. Indeed, either $e_1>0$ hence $d_1$ is even, or $e_1=0$ hence $\Det_{d_1}=[\det\cV_{d_1}]$ by Remark~\ref{zero-Det_rem}. So, we can simplify the above equation in $\Pic\big(\Flag_X(\Lambda)\big)\big/2$\,:
$$
[\can_{\ff_\Lambda}] = [\det\cV_{d+e_k}]^{d+d_{k-1}} \cdot
[\det\cV]^{d} \cdot \Det_d^{\twist(\Lambda)}\,.
$$
Now, if $e_k<e$ then $d-d_{k-1}=d_k-d_{k-1}$ is even and
$\row(\Lambda)=d$\,; on the other hand, if $e_k=e$, then
$\row(\Lambda)=d_{k-1}$. In both cases, the above expression becomes
$[\det\cV]^{\row(\Lambda)}\cdot \Det_d^{\twist(\Lambda)}$, which is
$\Twist(\Lambda)$ by Definition~\ref{Twist_def}.

Similarly, the case $k=1$ is an easy consequence of
Corollary~\ref{can-Flag_cor}.
\end{proof}

Let $\Lambda$ be an even Young diagram in \deframe. Choose $L_\Lambda$ a line bundle of class $\Twist(\Lambda)$ in $\Pic\big(\Grass_X(d,\cV)\big)/2$. (For simplicity, choose $L_\vide=\cO$.) By the equality of Proposition~\ref{cond-even_prop}, there exists an isomorphism $\psi_\Lambda: M_\Lambda^{\otimes 2} \isoto \can_{\ff_\Lambda} \otimes \ff_\Lambda^*L_\Lambda$ over~$\Flag_X(\Lambda)$, \ie an alignment $A_\Lambda=(M_\Lambda,\psi_\Lambda)\,:\ \cO_{\Flag_X(\Lambda)}\alto \can_{\ff_\Lambda} \otimes \ff_\Lambda^*L_\Lambda$, which induces a \lax\ push-forward along the morphism $\ff_\Lambda:\Flag_X(\Lambda)\too\Grass_X(d,\cV)$, as in Definition~\ref{laxpush_defi}\,:
\begin{equation}\label{eq:gen_push_lambda}
(\ff_\Lambda)_*\circ \alis{A_\Lambda}: \Wcoh^{0}\big(\Flag_X(\Lambda),\cO\big) \too\Wcoh^{|\Lambda|}\big(\Grass_X(d,\cV),L_\Lambda\big)\,.
\end{equation}
\begin{defi}
\label{gen-Witt_def}%
For any even Young diagram $\Lambda$ in \deframe, we define%
\begin{equation}
\label{elements_eq}%
\gen_{d,e}(\Lambda)\in
\Wcoh^{|\Lambda|}\big(\Grass_X(d,\cV)\,,\,L_\Lambda\big)
\end{equation}
as the image of the unit form $1\in\Wcoh^0(\Flag_X(\Lambda),\cO)$ by the \lax\ push-forward \eqref{eq:gen_push_lambda}.
\end{defi}

\begin{rema}
\label{rem:choices}%
The definition of $\gen_{d,e}(\Lambda)$ involves the choice of $L_\Lambda$ in the class~$\Twist(\Lambda)$ and the choices of $M_\Lambda$ and of the isomorphism $\psi_\Lambda:M_\Lambda^{\otimes 2} \isoto \can_{\ff_\Lambda} \otimes \ff_\Lambda^*L_\Lambda$. By Theorem~\ref{laxpush_thm}~\eqref{it:push-unique}, the lax-similitude class of $\gen_{d,e}(\Lambda)$ is independent of these choices. By Remark~\ref{lax-basis_rema}, our Main Theorem~\ref{main_thm} (that they form a total basis) will hold regardless of these choices. For these reasons, and also to lighten notation, we do not incorporate these choices in the notation~$\gen_{d,e}(\Lambda)$.
\end{rema}

\begin{exam}
\label{empty_exa}%
Let $\Lambda=\vide$ be the empty Young diagram in \deframe, which is even, as we know. Then $T(\Lambda)$ is trivial and $\ff_\vide:\Flag_X(d,e;\vide)\to \Grass_X(d,\cV)$ is the identity, so $\can_{\ff_\Lambda}=\cO$ and $\gen_{d,e}(\vide)=1\in\Wcoh^0(\Grass_X(d,\cV),\cO)$ is the unit form on the Grassmannian if $L_\Lambda=M_\Lambda=\cO$ and the isomorphism $M_\Lambda^{\otimes 2} \simeq \can_{\ff_\Lambda} \otimes \ff_\Lambda^*L_\Lambda$ is the trivial one. (Otherwise, $\gen_{d,e}(\vide)$ is only \lax-similar to the unit form.)
\end{exam}

\begin{rema}
\label{desing_rem}%
By Proposition~\ref{push-1_prop}, the \lax-similitude class of our Witt classes $\gen_{d,e}(\Lambda)$ does not really depend on the chosen desingularization $\Flag(\Lambda)$ of the Schubert subvariety corresponding to the Young diagram~$\Lambda$. Indeed the unit will remain the unit when pushed-forward between two such desingularizations, if one is obtained from the other by blow-up. (The condition on the relative bundle being even in Proposition~\ref{push-1_prop} is automatically satisfied if both desingularizations have even relative bundle with respect to the Grassmannian.)
\end{rema}


\section{Cellular decomposition}
\label{cell-dec_sec}%


We describe the usual relative cellular decomposition of
Grassmannians. Fix $d,e\geqslant2$ for the whole section.

\begin{nota}
Fix a complete flag $\cV_\smallbullet$ of vector bundles on~$X$
$$
0=\cV_0\subb\cV_1\subb\cdots\subb \cV_{d+e}=\cV\,,
$$
as in~\eqref{complete_eq}. We set $\cV^1=\cV_{d+e-1}$ to be the chosen codimension one subbundle of~$\cV$. We have an obvious closed immersion $\Grass_X(d,\cV^1)\hookrightarrow\Grass_X(d,\cV)$, of codimension~$d$, whose open complement is denoted by~$U_X(d,\cV_\smallbullet)$.
\end{nota}

\begin{nota}
\label{nsubb_not}%
Let $\cP_d\subb\cV$ be a subbundle of rank~$d$. We write $\cP_d\nsubb\cV^1$ to express that $\cP_d$ is not a subbundle of $\cV^1$ but moreover satisfies the equivalent conditions\,:
\begin{enumerate}
\item The natural map from
$\cP_d/(\cP_d\cap\cV^1)=(\cP_d+\cV^1)/\cV^1$ into $\cV/\cV^1$ is an
isomorphism.
\item $\cP_d\cap\cV^1$ is a subbundle of $\cP_d$ (in the strong
sense of Definition~\ref{subb_def}).
\end{enumerate}
Over a field, this amounts to $\cP_d\not\subset\cV^1$ but this is not sufficient in general.
\end{nota}

\begin{defi}
\label{main'_def}%
Using notation of Section~\ref{Grass_sec}, we have a commutative diagram
\begin{equation}
\label{main'_eq}%
\vcenter{\xymatrix@C=2em{
 \Grass_X(d,\cV^1) \ar@{^{(}->}[r]^(.57){\iota}
 \ar@{}[r]|(.4){|}
& \Grass_X(d,\cV)
& \ \ U_X(d,\cV_\smallbullet)
 \ar@{_{(}->}[l]_-{\upsilon} \ar@{}[l]|-{\circ}
 \ar[d]^{\alpha}\ar@{_{(}->}[dl]_-{\tilde\upsilon}\ar@{}[dl]|-{\circ}
\\
\Flag_{X}((d-1,d),(e,e-1)) \ar[u]^{\tilde{\pi}} \ar@{^{(}->}[r]^(.5){\tilde{\iota}} \ar@{}[r]|(.53){|} &
\Flag_{X}((d-1,d),(e,e)) \ar[u]^{\pi} \ar[r]^-{\tilde{\alpha}} &
\Flag_{X}(d-1,\cV^1)\,.
}}\kern-.7em
\end{equation}
which looks as follows on points\,:
\begin{equation}
\label{main-pts_eq}%
\vcenter{\xymatrix{
\{\cP_d \subb \cV^1 \} \ar@{^{(}->}[r]^{\iota} \ar@{}[r]|(.4){|} &
\{\cP_d \subb \cV \} &
\ \{\cP_d \nsubb \cV^1 \}
 \ar@{_{(}->}[l]_{\upsilon}\ar@{}[l]|-{\circ}
 \ar@{_{(}->}[dl]_-{\tilde\upsilon}\ar@{}[dl]|-{\circ}
 \ar[d]^{\alpha}
\\
\{\cP_{d-1} \subb \cP_{d} \subb \cV^1\} \ar[u]^-{\tilde{\pi}}
 \ar@{^{(}->}[r]^(.37){\tilde{\iota}} \ar@{}[r]|(.4){|} &
\{\cP_{d-1} \subb \cP_d \subb \cV \;|\; \cP_{d-1} \subb \cV^1 \}
 \ar[u]^{\pi} \ar[r]^-{\tilde{\alpha}} &
\{ \cP_{d-1} \subb \cV^1 \}\,.}}\kern-1em
\end{equation}
Here $\iota$, $\tilde\iota$, $\pi$, $\tilde\pi$ and $\tilde\alpha$ are the obvious morphisms. The morphism $\tilde\upsilon$ maps $\cP_d$ to the flag $\cP_{d-1}\subb\cP_d$ with $\cP_{d-1}:=\cP_d \cap \cV^1$ (see Not.\,\ref{nsubb_not}). Finally $\alpha$ is defined as $\tilde\alpha\circ\tilde\upsilon$.
\end{defi}

\begin{prop}
\label{blowup_prop}%
In Diagram~\eqref{main'_eq}, the scheme $\Flag_X((d-1,d),(e,e))$ is the blow-up of $\Grass_X(d,\cV)$ along $\Grass_X(d,\cV^1)$ with exceptional fiber $\Flag_X((d-1,d),(e,e-1))$.
\end{prop}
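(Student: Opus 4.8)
The plan is to exhibit both the flag variety and the blow-up as one and the same closed subscheme of a Grassmann bundle over $V:=\Grass_X(d,\cV)$. Set $\mathcal{L}:=\cV/\cV^1$, a line bundle pulled back from~$X$, and let $\phi\colon\Taut_d\to\mathcal{L}$ be the composite $\Taut_d\hookrightarrow\cV\twoheadrightarrow\mathcal{L}$ over~$V$, a morphism from the rank-$d$ tautological subbundle of~$\cV$ to a line bundle. The first step is to check that $Z:=\Grass_X(d,\cV^1)$, with its natural closed-subscheme structure inside~$V$, equals the zero locus $Z(\phi)$; equivalently, its ideal sheaf is $\mathcal{I}_Z=\im\bigl(\Taut_d\otimes\mathcal{L}^{\vee}\xrightarrow{\ \phi\otimes\id\ }\cO_V\bigr)$, and $Z\hookrightarrow V$ is a regular immersion of codimension~$d$. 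This is local: in a standard affine chart of~$V$ around a point $\cP^{0}\subb\cV^1$ one has $V\cong\operatorname{Hom}(\cP^{0},C)$ for a complement $C$ of~$\cP^{0}$ in~$\cV$ chosen so that $C\cap\cV^1$ is a complement of~$\cP^{0}$ in~$\cV^1$; then $\phi$ becomes the ``$\operatorname{Hom}(\cP^{0},\mathcal{L})$-block'' of the coordinate functions, that is, $d$ of the coordinates $a_1,\dots,a_d$, and $Z$ is cut out by the (obviously regular) sequence $(a_1,\dots,a_d)$. The codimension $d$ also follows from Proposition~\ref{Pic-Grass_prop}.

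Next, by Definition~\ref{Flag_def} the morphism $\pi$ identifies $\Flag_X((d-1,d),(e,e))$ with the closed subscheme $W\subset\Grass_V(d-1,\Taut_d)$ (the Grassmann bundle of rank-$(d-1)$ subbundles of $\Taut_d$ over $V$) carved out by the condition $\cP_{d-1}\subb\ker\phi$ on the tautological subbundle $\cP_{d-1}\subb\Taut_d$ --- that is, $\cP_{d-1}\subb\cV^1$, which is exactly the vanishing of the composite $\cP_{d-1}\hookrightarrow\Taut_d\xrightarrow{\phi}\mathcal{L}$. Over the open $U:=V\setminus Z$ the map $\phi$ is surjective onto the line bundle $\mathcal{L}$, so $\ker\phi$ is a rank-$(d-1)$ subbundle of $\Taut_d|_U$; this gives a section $U\to\Grass_V(d-1,\Taut_d)$ with image $W|_U$, and $W|_U\xrightarrow{\ \sim\ }U$. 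Over~$Z$, where $\phi$ restricts to~$0$, the condition is vacuous, so $W|_Z=\Grass_Z(d-1,\Taut^Z_d)$, which --- unwinding via Example~\ref{k=1_exa} and Lemma~\ref{rel_lem}, and noting that for $((d-1,d),(e,e-1))$ the two relevant flag bundles are both $\cV^1$ --- is precisely $\Flag_X((d-1,d),(e,e-1))$, the claimed exceptional fiber.

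Finally one must identify $W\to V$ with $\mathrm{Bl}_Z(V)$ and $W|_Z$ with its exceptional divisor. By the first step this is again local, and in the chart above, writing the tautological rank-$(d-1)$ subbundle as the kernel of a rank-$1$ quotient $\Taut_d\twoheadrightarrow(\cdot)$ with homogeneous coordinates $[q_1{:}\cdots{:}q_d]$, the condition ``$\cP_{d-1}\subb\ker\phi$'' says exactly that the row $(a_1,\dots,a_d)$ representing $\phi$ is proportional to $(q_1,\dots,q_d)$, i.e.\ that all the minors $a_iq_j-a_jq_i$ vanish. Thus $W$ becomes the classical model $\{\,a_iq_j=a_jq_i\,\}\subset\bbA^{N}\times\bbP^{d-1}$ of the blow-up of $\bbA^{N}$ along $(a_1=\cdots=a_d=0)$; since $(a_1,\dots,a_d)$ is a regular sequence (first step), this model really is that blow-up, so globally $W\to V$ is $\mathrm{Bl}_Z(V)$. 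Then $W|_Z$ is the exceptional divisor $\bbP_Z(\mathcal{I}_Z/\mathcal{I}_Z^{2})$; from the tautological exact sequences the conormal bundle is $\mathcal{I}_Z/\mathcal{I}_Z^{2}=\Taut^Z_d\otimes\mathcal{L}^{\vee}$, whose projectivization is that of $\Taut^Z_d$, matching the $\Grass_Z(d-1,\Taut^Z_d)$ found above. (One can instead avoid charts here by the universal property: $\mathcal{I}_Z\cO_W$ is invertible because on~$W$ the morphism $\phi$ factors through the tautological quotient line bundle with non-zero-divisor cofactor --- the first step again --- while any $g\colon T\to V$ with $\mathcal{I}_Z\cO_T$ invertible forces $g^{*}\phi$ to have invertible image, hence to factor uniquely through a rank-$1$ quotient of $g^{*}\Taut_d$, producing the unique lift $T\to W$.)

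The main obstacle is the scheme-theoretic precision of the first and last steps. It is easy to see that $W\to V$ and $\mathrm{Bl}_Z(V)$ agree topologically and over~$U$; the equality of scheme structures --- equivalently the invertibility of $\mathcal{I}_Z\cO_W$, equivalently the absence of superfluous relations or embedded components along the exceptional locus --- rests entirely on the regular-sequence statement of the first step, after which everything is bookkeeping.
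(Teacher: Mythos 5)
Your proof is correct, but it takes a genuinely different route from the paper's.

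The paper's argument reduces to $X=\Spec(\bbZ)$ with $\cV$ free by base change, then views $\Flag_X((d-1,d),(e,e))$ as the projective bundle $\bbP_Y(\cV/\Taut_{d-1})$ over $Y=\Grass_X(d-1,\cV^1)$, observes that the preimage of $Z=\Grass_X(d,\cV^1)$ is the sub-$\bbP^{e-1}$-bundle $\bbP_Y(\cV^1/\Taut_{d-1})$, hence a relative hyperplane and so an effective Cartier divisor, applies the universal property of blow-up to get a comparison morphism over $\Grass_X(d,\cV)$, and finishes by noting that a finite birational morphism onto a normal scheme is an isomorphism. You instead never change the base: you fiber over $V=\Grass_X(d,\cV)$ itself (via the blow-down map), realize $\Flag$ as the degeneracy locus $W=\{\cP_{d-1}\subb\ker\phi\}$ inside $\Grass_V(d-1,\Taut_d)$ for the tautological map $\phi\colon\Taut_d\to\cV/\cV^1$, check that $Z=Z(\phi)$ is cut out locally by a length-$d$ regular sequence, and recognize $W$ chart-by-chart as the classical incidence model $\{a_iq_j=a_jq_i\}$, which is the blow-up precisely because the regular sequence forces the Koszul relations to present the Rees algebra. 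The exceptional fiber then drops out from the conormal bundle $\cI_Z/\cI_Z^2=\Taut_d^Z\otimes\cL^{\vee}$, whose projectivization is $\bbP(\Taut_d^Z)=\Grass_Z(d-1,\Taut_d^Z)$, whereas in the paper this identification is built into the statement of Lemma~\ref{rel_lem}. The trade-offs: the paper's route sidesteps any Rees-algebra computation by invoking normality of the target (a regular scheme over~$\bbZ$) but needs the base-change reduction and the ``finite birational onto normal'' criterion; your route is chart-explicit, has no base-change step, and makes the structure of the comparison isomorphism visible, at the cost of leaning on the standard but nontrivial commutative-algebra fact that the Koszul relations of a regular sequence present the associated Rees algebra. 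Your parenthetical universal-property argument usefully closes the gluing step that the purely local computation glosses over. Both arguments are sound.
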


\begin{proof}
This is probably folklore to algebraic geometers. By compatibility of blow-ups with pull-backs, we can reduce to the case where $X$ is affine (even $X=\Spec(\bbZ)$) and suppose that $\cV$ is free and that $\cV^1=\ker(\cV\onto\cO)$ is the kernel of a (split) epimorphism to~$\cO$. We omit $X$ in the notation for the rest of the proof.

Let us check that $\Bl:=\Flag((d-1,d),(e,e))$ has the universal property of the blow-up (see \cite[§ 8.1.2, Corollary~1.16]{Liu02}), \ie it is final among schemes over $\Grass(d,\cV)$ in which the preimage of $Z:=\Grass(d,\cV^1)$ is an effective Cartier divisor (\ie a codimension one closed subscheme locally given by a principal ideal). Let us first check that $B$ indeed has this property. Note that the left-hand square of~\eqref{main'_eq} is cartesian. Moreover, we have an identification $B=\Flag((d-1,d),(e,e))=\bbP_Y (\cV/\Taut_{d-1})$ where $Y=\Grass(d-1,\cV^1)$ as in Lemma~\ref{rel_lem}. Under this identification, the inverse image $\Flag((d-1,d),(e,e-1))$ of~$Z$ becomes $\bbP_Y(\cV^1/\Taut_{d-1})$. So this inverse image is locally $\bbP^{e-1}_Y \subset \bbP^e_Y$ hence an effective Cartier divisor, as wanted.

Suppose now that $f:W\to \Grass_X(d,\cV)$ is a morphism for which $f\inv(\Grass_X(d,\cV^1))$ is an effective Cartier divisor. Let us consider $W$ as a functor of points and show that there exists a unique morphism $g:W\to B$ such that $\tilde\pi\circ g=f$. Consider on $\Grass(d,\cV)$, the morphism $s:\Taut_d\to \cO$ obtained by composing the inclusion $\Taut_d\into\cV$ and the projection $\cV\onto \cV/\cV^1=\cO$. By definition, $Z$ is the zero locus of this morphism, \ie it is defined by the ideal $\im(s)\subset\cO_{\Grass(d,\cV)}$. By assumption on $f:W\to \Grass_X(d,\cV)$, the ideal $\im(f^*(s))=f\inv(\im(s))\subset\cO_{W}$ is invertible. Hence, over each point $\Spec(R)\to W$ of~$W$, $\ker(s\restr{R})$ is a codimension one subbundle of~$\Taut_d$, which is contained in~$\cV^1$ by construction. This defines the wanted morphism $g:W\to B$ sending each $\Spec(R)\to W$ to $\ker(s\restr{R})\subb \cP_d$. Uniqueness is easy. If $g:W\to B$ satisfies $g\circ\tilde\pi=f$ then $g$ maps a point $w:\Spec(R)\to W$ to $\cP_{d-1}\subb\cP_d$ with $\cP_d=f(w)$ forced. On the other hand $\cP_{d-1}\subset \cV^1$ forces $\cP_{d-1}$ to be in $\ker(s\restr{R})$, hence to be equal to it by dimension counting.
\end{proof}

\begin{defi}
\label{cell_def}%
Let $\Bl_X(d,\cV_\smallbullet)=\Flag_X((d-1,d),(e,e))$ be the
blow-up of $\Grass_X(d,\cV)$ along $\Grass_X(d, \cV^1)$ and let
$\Exc_X(d,\cV_\smallbullet)=\Flag_X((d-1,d),(e,e-1))$ be the
exceptional fiber. By~\eqref{main'_eq}, $\Grass_X(d,\cV)$ now has a
decomposition like in~\cite[Hypothesis 1.2]{Balmer09}, namely there exists an auxiliary morphism $\tilde\alpha:\Bl_X(d,\cV)\to Y:=\Grass_X(d-1,\cV^1)$ from the blow-up to another scheme~$Y$, such that $\alpha:=\tilde\alpha\circ\tilde\upsilon$ is an $\bbA^*$-bundle\,:
\begin{equation}
\label{main_eq}%
\vcenter{\xymatrix@C=5em{
\Grass_X(d,\cV^1)
 \ar@{^{(}->}[r]^(.55){\iota} \ar@{}[r]|(.4){|} &
\Grass_X(d,\cV) &
\ U_X(d,\cV_\smallbullet) \ar@{_{(}->}[l]_-{\upsilon}\ar@{}[l]|-{\circ}
\ar@{_{(}->}[dl]_-{\tilde\upsilon}\ar@{}[dl]|-{\circ}
\ar[d]^{\alpha}
\\
\Exc_X(d,\cV_\smallbullet) \ar[u]^{\tilde{\pi}}
\ar@{^{(}->}[r]^(.5){\tilde{\iota}} \ar@{}[r]|(.4){|} &
\Bl_X(d,\cV_\smallbullet) \ar[u]^{\pi} \ar[r]^-{\tilde{\alpha}} &
\Grass_X(d-1,\cV^1)\,.\! }}
\end{equation}
Indeed, $\alpha$ is an $\bbA^e$-bundle because of the canonical isomorphism between $U_X(d,\cV_{\smallbullet})$ and $\bbP_Y(\cV/\Taut^Y_{d-1})\smallsetminus \bbP_Y(\cV^1/\Taut^Y_{d-1})$, under which $\alpha$ corresponds to the structure morphism to~$Y$.
\end{defi}

\begin{rema}
\label{Pic-can_rem}%
We compute the relevant Picard groups and canonical bundles, via the methods of Section~\ref{Grass_sec}. Let us start with Picard groups, using~\eqref{Pic-Flag_eq}. Since $\Pic(X)$ is a direct summand of the Picard group of all schemes in~\eqref{main_eq}, we focus on the relative Picard groups $\Pic_X(-):=\Pic(-)/\Pic(X)$. Then ``$\Pic_X(-)$ of~\eqref{main_eq}" equals
\begin{equation}
\label{Picards_eq}%
\vcenter{\xymatrix@C=4em{
\bbZ \Det_d \ar[d]_-{\tilde\pi^*=\left(\begin{smallmatrix}0\\1\end{smallmatrix}\right)}
& \bbZ \Det_d \ar[l]_-{\iota^*=1} \ar[d]_-{\pi^*=\left(\begin{smallmatrix}0\\1\end{smallmatrix}\right)} \ar[r]^-{\upsilon^*=1}
& \bbZ\upsilon^*(\Det_d)=\bbZ \alpha^*(\Det_{d-1})\kern-1em
\\
\bbZ \Det_{d-1} \oplus \bbZ \Det_d
& \bbZ \Det_{d-1} \oplus \bbZ \Det_d
 \ar[l]^{\tilde\iota^*=\left(\begin{smallmatrix}1&0\\0&1\end{smallmatrix}\right)} \ar[ru]|-{\tilde\upsilon^*=\left(\begin{smallmatrix}1& 1\end{smallmatrix}\right)}
& \bbZ \Det_{d-1} \ar[l]^-{\tilde\alpha^*=\left(\begin{smallmatrix}1\\0\end{smallmatrix}\right)} \ar[u]_{\alpha^*=1}
}}
\end{equation}
(In the case $X=\Spec(R)$ for a local ring $R$, the Picard groups
are exactly as above.) Here we used that the closed subscheme
$\Grass_X(d,\cV^1)$ is of codimension $d\geqslant2$ in
$\Grass_X(d,\cV)$ to see that
$\upsilon^*:\Pic(\Grass_X(d,\cV))\cong\Pic(U_X(d,\cV_\smallbullet))$.
We also used that $e\geqslant2$, otherwise $e-1=0$ and
$\Det_{d-1}\in\Pic(X)$ by Remark~\ref{zero-Det_rem}. (When $d=1$,
resp.\ $e=1$, we loose all components $\bbZ \Det_{d-1}$, resp.\ all
components $\bbZ \Det_d$ in the left column, in the previous
diagram.) Alternatively, ``$\Pic_X(\eqref{main_eq})=\eqref{Picards_eq}$" follows from the
computation of the Picard groups provided
in~\cite[Proposition~A.6]{Balmer09}. Finally, the maps into the
upper right corner of Diagram~\eqref{Picards_eq} are deduced from
\begin{equation}
\label{alpha-upsilon-Pic_eq}%
\upsilon^*(\Det_d) \cdot \alpha^*(\Det_{d-1})^{-1} =  [\cV/\cV^1]\,,
\end{equation}
which itself follows from Condition~(a) in Notation~\ref{nsubb_not}.

We shall use push-forwards along some morphisms of~\eqref{main_eq}. The classes of the relevant relative canonical bundles in the respective (plain) Picard groups are\,:
\begin{eqnarray}
[\can_\iota] & = & [\cV/\cV^1]^d \cdot \Det_d^{-1} \label{rel-can-subGrass_eq}%
\\
\left[\can_\pi\right] & = & [\cV/\cV^1]^{d-1} \cdot \Det_{d-1}^{d-1} \cdot \Det_d^{1-d} \label{rel-can-Bl_eq}%
\\
\left[\can_{\tilde\iota}\right] & = & [\cV/\cV^1]\cdot \Det_{d-1} \cdot \Det_d^{-1} \label{rel-can-Exc-div_eq}%
\\
\left[\can_{\tilde{\pi}}\right] & = & \Det_{d-1}^d \cdot \Det_d^{1-d}\,.\!
\label{rel-can-Exc_eq}
\end{eqnarray}
Indeed, Corollary~\ref{can-Flag_cor} gives~\eqref{rel-can-subGrass_eq}, \eqref{rel-can-Bl_eq} and $[\can_{\iota\tilde\pi}]=[\can_{\pi\tilde\iota}]=[\cV/\cV^1]^{d}\cdot\Det_{d-1}^{d}\cdot\Det_{d}^{-d}$, out of which the other two follow by multiplicativity of~$\can_{-}$. Again, when the fixed complete flag $\cV_\smallbullet=\cO_X^\smallbullet$ is trivial, the ``noise'' $\cV/\cV^1$ vanishes.
\end{rema}

We end this section with two geometric lemmas which will be useful
in the proof of the main theorem.

\begin{lemm}
\label{double-PB_lem}%
Let $d,e\geqslant2$ and let $\Lambda$ be an even Young
$(d,e)$-diagram with empty last row (\ie $\Lambda_k=0$, \ie
$\zero(\Lambda)>0$). Hence $\Lambda\restr{d-1,e}$ is an even
$(d-1,e)$-diagram. Then the base-changes to
$U_X(d,\cV_\smallbullet)$ of the morphisms $\ff_{d,e;\Lambda}$ and
$\ff_{d-1,e;\Lambda\smallrestr{d-1,e}}$ coincide, that is, we have
two cartesian squares
\begin{equation}
\label{double-PB_eq}%
\vcenter{ \xymatrix{ \Grass_X(d,\cV) \ar@{}[dr]|{\Box}
& \ U_X(d,\cV_\smallbullet)
 \ar@{_{(}->}[l]_-{\upsilon} \ar[r]^-{\alpha} \ar@{}[dr]|{\Box}
& \Grass_X(d-1,\cV^1)
\\
\Flag_X(d,e;\Lambda) \ar[u]^{\ff_{d,e;\Lambda}}
& \ U' \ar@{_{(}->}[l] \ar[r]^-{\alpha'} \ar[u]
& \Flag_{X}(d-1,e;\Lambda\restr{d-1,e})\,.\! \ar[u]_{\ff_{d-1,e;\Lambda\smallrestr{d-1,e}}}
}}
\end{equation}
\end{lemm}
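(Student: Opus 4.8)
The plan is to pin down the scheme $U'$ appearing in~\eqref{double-PB_eq} as a functor of points and to check that it fits into both squares simultaneously. Write $(\multi{d},\multi{e})$ for the $k$-tuples attached to $\Lambda$ as in Definition~\ref{de_def}. Since $\Lambda_d=0$ we have $d_k=d$ and $e_k=e-\Lambda_d=e$, so the top constraint ``$\cP_{d_k}\subb\cV_{d_k+e_k}=\cV$'' in the definition of $\Flag_X(d,e;\Lambda)$ is vacuous. If $k=1$ then $\Lambda=\varnothing=\Lambda\restr{d-1,e}$, both morphisms $\ff$ are identities, and $U'=U_X(d,\cV_\smallbullet)$ works trivially; so we may assume $k\geqslant 2$, whence $d_{k-1}\leqslant d-1$ and $e_{k-1}\leqslant e-1$, hence $d_{k-1}+e_{k-1}\leqslant d+e-2$ and $\cV_{d_{k-1}+e_{k-1}}\subb\cV^1=\cV_{d+e-1}$. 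Recall also that $\Lambda\restr{d-1,e}$ is even by Proposition~\ref{exact-Lambda_prop}(2).

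Next I would \emph{define} $U':=\Flag_X(d,e;\Lambda)\times_{\Grass_X(d,\cV)}U_X(d,\cV_\smallbullet)$, so the left square is cartesian by construction and $U'\to\Flag_X(d,e;\Lambda)$ is an open immersion. Its $Y$-points, for a scheme $Y$ over~$X$, are the flags $\cP_{d_1}\subb\cdots\subb\cP_{d_{k-1}}\subb\cP_d$ of subbundles of~$\cV$ over~$Y$ with $\rank\cP_{d_i}=d_i$, with $\cP_{d_i}\subb\cV_{d_i+e_i}$ for $i<k$, and with $\cP_d\nsubb\cV^1$ (Notation~\ref{nsubb_not}), the last condition encoding membership of $\cP_d$ in $U_X(d,\cV_\smallbullet)$. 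I then define $\alpha':U'\to\Flag_X(d-1,e;\Lambda\restr{d-1,e})$ on points by
$$
(\cP_{d_1}\subb\cdots\subb\cP_{d_{k-1}}\subb\cP_d)\ \longmapsto\ (\cP_{d_1}\subb\cdots\subb\cP_{d_{k-1}}\subb\cP_{d-1})\,,\qquad \cP_{d-1}:=\cP_d\cap\cV^1\,,
$$
dropping the redundant top level when $d_{k-1}=d-1$ (i.e.\ when $\Lambda\restr{d-1,e}$ has only $k-1$ blocks). Well-definedness is where the content lies: by Notation~\ref{nsubb_not}, used exactly as for the well-definedness of $\alpha$ in Definition~\ref{main'_def}, $\cP_{d-1}=\cP_d\cap\cV^1$ is a rank-$(d-1)$ subbundle of $\cV^1$ and $\cP_{d-1}\subb\cP_d$ is a subbundle; since $\cP_{d_{k-1}}\subb\cV_{d_{k-1}+e_{k-1}}\subb\cV^1$ and $\cP_{d_{k-1}}\subb\cP_d$ we get $\cP_{d_{k-1}}\subseteq\cP_{d-1}$, and this inclusion is a subbundle because $\cP_{d-1}/\cP_{d_{k-1}}=\ker\big(\cP_d/\cP_{d_{k-1}}\twoheadrightarrow\cP_d/\cP_{d-1}\big)$ is locally free; the remaining constraints ($\cP_{d_i}\subb\cV_{d_i+e_i}$ for $i<k$, unchanged, and $\cP_{d-1}\subb\cV^1$, the new ``$e'_{k'}=e$'' one) then hold. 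By construction $\alpha'$ is natural in~$Y$ and $\ff_{d-1,e;\Lambda\smallrestr{d-1,e}}\circ\alpha'=\alpha\circ(U'\to U_X(d,\cV_\smallbullet))$.

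It remains to invert the induced map $U'\to\Flag_X(d-1,e;\Lambda\restr{d-1,e})\times_{\Grass_X(d-1,\cV^1)}U_X(d,\cV_\smallbullet)$. A $Y$-point of the target consists of a flag $\cP_{d_1}\subb\cdots\subb\cP_{d_{k-1}}\subb\cP_{d-1}$ and a subbundle $\cP_d\in U_X(d,\cV_\smallbullet)(Y)$ with $\cP_d\cap\cV^1=\cP_{d-1}$; since $\cP_d\nsubb\cV^1$ the inclusion $\cP_{d-1}\subb\cP_d$ is a subbundle, hence so is $\cP_{d_{k-1}}\subb\cP_d$ (its cokernel is an extension of the vector bundles $\cP_{d-1}/\cP_{d_{k-1}}$ and $\cP_d/\cP_{d-1}$), so $(\cP_{d_1}\subb\cdots\subb\cP_{d_{k-1}}\subb\cP_d)$ is a $Y$-point of~$U'$. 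The two assignments are manifestly mutually inverse, each determining $\cP_{d-1}$ as $\cP_d\cap\cV^1$ once $\cP_d$ is given; being natural in~$Y$ they yield an isomorphism of $X$-schemes compatible with both projections, which is precisely the statement that the right square is cartesian with the same~$U'$.

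I expect the only delicate point to be this bookkeeping of subbundles in the strong sense of Definition~\ref{subb_def}: one must verify, over an \emph{arbitrary} base~$Y$, that $\cP_d\cap\cV^1$ has constant rank $d-1$ and that all intermediate inclusions remain locally split. Everything is forced by the two equivalent reformulations of $\cP_d\nsubb\cV^1$ in Notation~\ref{nsubb_not} (which is why that notion was isolated there), together with the standard facts that kernels of surjections, and extensions, of vector bundles are again vector bundles. The case distinction $d_{k-1}=d-1$ versus $d_{k-1}<d-1$, i.e.\ whether $\Lambda\restr{d-1,e}$ carries $k-1$ or $k$ blocks, is purely cosmetic and is absorbed by the phrase ``dropping the redundant top level''.
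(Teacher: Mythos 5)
Your argument is correct and follows essentially the same route as the paper's own proof, namely verifying the claim on functors of points and handling the case distinction between $\zero(\Lambda)=1$ (where $d_{k-1}=d-1$ and the top level is ``dropped'') and $\zero(\Lambda)>1$ (where it is retained), the paper making this split explicit in two diagrams while you absorb it into a parenthetical remark. You supply more detail than the paper on the subbundle bookkeeping (using that kernels of surjections and extensions of vector bundles are again vector bundles, and the two characterizations in Notation~\ref{nsubb_not}), which the paper leaves to the reader with ``it is easy to check that the two squares are cartesian.''
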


\begin{proof}
Let us check this on points. Let $\multi{d}$ and $\multi{e}$ be the
$k$-tuples associated to $\Lambda$ as usual (Def.\,\ref{de_def}). We
need to distinguish two cases, namely $d_{k}-d_{k-1}>1$ and $d_k-d_{k-1}=1$.

When $d_k>d_{k-1}+1$, that is, when there is more than one zero line
at the end of $\Lambda$ (\ie $\zero(\Lambda)>1$), we then have
$k(\Lambda\restr{d-1,e})=k(\Lambda)=k$ and the $k$-tuples
$\multi{d}(\Lambda\restr{d-1,e})$ and
$\multi{e}(\Lambda\restr{d-1,e})$ are almost the same as $\multi{d}$
and $\multi{e}$ except for the last entry of
$\multi{d}(\Lambda\restr{d-1,e})$ which becomes~$d-1$.
Diagram~\eqref{double-PB_eq} then looks as follows on points (as
usual the $\cP_i$ and $\cP'_j$ are ``variables'' whereas the $\cV_i$
belong to the fixed complete flag):
$$\xy 0;<1ex,0ex>:
(7,15)*\xybox{
\xymatrix@C=0.6ex{
\{ \cP_d \ar@{}[r]|-{\subb} & \cV \}
}
};
(34,15)*\xybox{
\xymatrix@C=0.6ex{
\{ \cP_d \ar@{}[r]|-{\nsubb} & \cV^1 \}
}
};
(63,15)*\xybox{
\xymatrix@C=0.6ex{
\{ \cP'_{d-1} \ar@{}[r]|-{\subb} & \cV^1 \}
}
};
(0,0)*!(0,-2.5)\xybox{
\xymatrix@R=1.5ex@C=0.5ex{
\cdots \ar@{}[r]|-{\subb} &
\cP_{d_{k-1}} \ar@{}[d]|{\rotatebox{270}{$\subbeq$}}_*!/_.0ex/{\labelstyle e_{k-1}} \ar@{}[r]|-{\subb} & \cP_{d} \ar@{}[d]|{\rotatebox{270}{$\subbeq$}}_*!/_.5ex/{\labelstyle e} \\
\cdots \ar@{}[r]|-{\subb} & \cV_{d_{k-1}+e_{k-1}} \ar@{}[r]|-{\subb} & \cV
}
}*\frm{\{}*\frm{\}};
(28,0)*!(0,-2.5)\xybox{
\xymatrix@R=1.5ex@C=0.5ex{
\cdots \ar@{}[r]|-{\subb} &
\cP_{d_{k-1}} \ar@{}[d]|{\rotatebox{270}{$\subbeq$}}_*!/_.0ex/{\labelstyle e_{k-1}} \ar@{}[r]|-{\subb} & \cP_{d} \ar@{}[d]|{\rotatebox{270}{$\subbeq$}}_*!/_.5ex/{\labelstyle e} \\
\cdots \ar@{}[r]|-{\subb} & \cV_{d_{k-1}+e_{k-1}} \ar@{}[r]|-{\subb} & \cV
}
}*\frm{\{}*\frm{\}};
(45.3,7)*\xybox{
\xymatrix@R=1.5ex{
\cV^1 \\
\ar@{}[u]|{\rotatebox{90}{$\nsubb$}}
}
};
(56,0)*!(0,-2.5)\xybox{
\xymatrix@R=1.5ex@C=0.5ex{
\cdots \ar@{}[r]|-{\subb} & \cP_{d_{k-1}} \ar@{}[d]|{\rotatebox{270}{$\subbeq$}}_*!/_.0ex/{\labelstyle e_{k-1}} \ar@{}[r]|-{\subb} & \cP'_{d-1}
\ar@{}[d]|{\rotatebox{270}{$\subbeq$}}_*!/_.5ex/{\labelstyle e} \\
\cdots \ar@{}[r]|-{\subb} &
\cV_{d_{k-1}+e_{k-1}} \ar@{}[r]|-{\subb} & \cV^1
}
}*\frm{\{}*\frm{\}};
{\ar@{_{(}->} (30,15)*{};(15,15)*{}};
{\ar (43,15)*{};(59,15)*{} ^-{\alpha}};
{\ar (10,5)*{};(10,13)*{}^{\ff_{d,e;\Lambda}}};
{\ar (38,5)*{};(38,13)*{}};
{\ar (65,5)*{};(65,13)*{}|{\ff_{d-1,e;\Lambda\smallrestr{d-1,e}}}};
{\ar@{_{(}->} (24,0)*{};(20.5,0)*{}};
{\ar (48.5,0)*{};(53,0)*{}^-{\alpha'}};
(23,9)*{\labelstyle \Box};
(51,9)*{\labelstyle \Box};
\endxy$$
where the morphisms $\alpha$ sends $\cP_d$ to $\cP'_{d-1}:=\cP_d \cap
\cV^1$ and similarly for~$\alpha'$ .

On the other hand, when $d_k=d_{k-1}+1$, that is, when $\Lambda$ has
only one zero line (\ie $\zero(\Lambda)=1$), then we have
$k(\Lambda\restr{d-1,e})=k(\Lambda)-1=k-1$ and the $(k-1)$-tuples
$\multi{d}(\Lambda\restr{d-1,e})$ and
$\multi{e}(\Lambda\restr{d-1,e})$ are respectively $\multi{d}$ and
$\multi{e}$ truncated from their last entry.
Diagram~\eqref{double-PB_eq} then looks as follows on points\,:
$$
\xy 0;<1ex,0ex>:
(6,15)*\xybox{
\xymatrix@C=0.6ex{
\{ \cP_d \ar@{}[r]|-{\subb} & \cV \}
}
};
(37,15)*\xybox{
\xymatrix@C=0.6ex{
\{ \cP_d \ar@{}[r]|-{\nsubb} & \cV^1 \}
}
};
(66,15)*\xybox{
\xymatrix@C=0.6ex{
\{ \cP'_{d-1} \ar@{}[r]|-{\subb} & \cV^1 \}
}
};
(0,0)*!(0,-2.5)\xybox{
\xymatrix@R=1.5ex@C=0.5ex{
\cdots \ar@{}[r]|-{\subb} &
\cP_{d_{k-1}} \ar@{}[d]|{\rotatebox{270}{$\subbeq$}}_*!/_.0ex/{\labelstyle e_{k-1}} \ar@{}[r]|-{\subb} & \cP_{d} \ar@{}[d]|{\rotatebox{270}{$\subbeq$}}_*!/_.5ex/{\labelstyle e} \\
\cdots \ar@{}[r]|-{\subb} & \cV_{d_{k-1}+e_{k-1}} \ar@{}[r]|-{\subb} & \cV
}
}*\frm{\{}*\frm{\}};
(31,0)*!(0,-2.5)\xybox{
\xymatrix@R=1.5ex@C=0.5ex{
\cdots \ar@{}[r]|-{\subb} &
\cP_{d_{k-1}} \ar@{}[d]|{\rotatebox{270}{$\subbeq$}}_*!/_.0ex/{\labelstyle e_{k-1}} \ar@{}[r]|-{\subb} & \cP_{d} \ar@{}[d]|{\rotatebox{270}{$\subbeq$}}_*!/_.5ex/{\labelstyle e} \\
\cdots \ar@{}[r]|-{\subb} & \cV_{d_{k-1}+e_{k-1}} \ar@{}[r]|-{\subb} & \cV
}
}*\frm{\{}*\frm{\}};
(48.3,7)*\xybox{
\xymatrix@R=1.5ex{
\cV^1 \\
\ar@{}[u]|{\rotatebox{90}{$\nsubb$}}
}
};
(62,0)*!(0,-2.5)\xybox{
\xymatrix@R=1.5ex@C=0.5ex{
\cdots \ar@{}[r]|-{\subb} & \cP_{d_{k-1}}
\ar@{}[d]|{\rotatebox{270}{$\subbeq$}}^*!/^.5ex/{\labelstyle
e_{k-1}} \\
\cdots \ar@{}[r]|-{\subb} & \cV_{d_{k-1}+e_{k-1}}
}
}*\frm{\{}*\frm{\}};
{\ar@{_{(}->} (33,15)*{};(14,15)*{}};
{\ar (45,15)*{};(62,15)*{} ^-{\alpha}};
{\ar (9,5)*{};(9,13)*{}^{\ff_{d,e;\Lambda}}};
{\ar (40,5)*{};(40,13)*{}};
{\ar (69,5)*{};(69,13)*{}|{\ff_{d-1,e;\Lambda\smallrestr{d-1,e}}}};
{\ar@{_{(}->} (27,0)*{};(21,0)*{}};
{\ar (52,0)*{};(58.5,0)*{}^-{\alpha'}};
(24,9)*{\labelstyle \Box};
(54,9)*{\labelstyle \Box};
\endxy
$$
where $\alpha$ still sends $\cP_d$ to $\cP'_{d-1}:=\cP_d \cap \cV^1$
and where $\ff_{d-1,e;\Lambda\smallrestr{d-1,e}}$ sends a flag
to~$\cP_{d_{k-1}}$. Note that in this case, $\alpha'$ drops the last
subspace~$\cP_d$ in the flag.

\smallbreak

In both cases, it is easy to check that the two squares are
cartesian.
\end{proof}

\begin{lemm}
\label{PB_lem}%
Let $d,e\geqslant2$ and let $\Lambda''$ be an even $(d-1,e)$-diagram
such that $\Lambda''_{d-1}$ is odd. Hence we can consider the even
$(d,e-1)$-diagram
$\Lambda'=(\Lambda''_1-1,\ldots,\Lambda''_{d-1}-1,0)$. Then, there
exists a commutative diagram
\begin{equation}
\label{PB_eq}%
\vcenter{\xymatrix{ \Grass_X(d,\cV^1)
& \Exc_X(d,\cV_\smallbullet) \ar[r]^-{\tilde\alpha\,\tilde\iota} \ar[l]_-{\tilde\pi}
& \Grass_X(d-1,\cV^1)
\\
\Flag_X(d,e-1;\Lambda') \ar[u]^{\ff_{d,e-1;\Lambda'}}
& F' \ar[u]^-{f'} \ar[r] \ar[l]^-{\pi'} \ar@{}[ru]|{\Box}
& \Flag_X(d-1,e;\Lambda'') \ar[u]_{\ff_{d-1,e;\Lambda''}}
}}
\end{equation}
where $\Exc_X(d,\cV_\smallbullet)$ is the exceptional fiber of
Diagram~\eqref{main_eq} and where the right-hand square is
cartesian. Moreover, either $\pi'$ is an isomorphism or the scheme
$F'$ (with the morphism~$\pi'$) identifies with the blow-up of
$\Flag_X(d,e-1;\Lambda')$ along a closed regular subscheme of odd
codimension.
\end{lemm}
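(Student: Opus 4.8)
The approach is to describe $F'$ explicitly through its functor of points and to recognise it, after fixing part of the universal flag, as one of the blow‑ups already identified in Proposition~\ref{blowup_prop}. First I would fix the $k''$‑tuples $\multi{d}''=(d_1'',\ldots,d_{k''}'')$ and $\multi{e}''=(e_1'',\ldots,e_{k''}'')$ attached to $\Lambda''$ by Definition~\ref{de_def}; since $\Lambda''_{d-1}$ is odd it is positive, so $d_{k''}''=d-1$. Unwinding $F'=\Exc_X(d,\cV_\smallbullet)\times_{\Grass_X(d-1,\cV^1)}\Flag_X(d-1,e;\Lambda'')$ against~\eqref{main-pts_eq} and Definition~\ref{re-Flag_def}, a point of $F'$ is a flag
\[
\cP_{d_1''}\subb\cdots\subb\cP_{d_{k''-1}''}\subb\cP_{d-1}\subb\cP_d
\]
with $\cP_{d_i''}\subb\cV_{d_i''+e_i''}$ for $i<k''$, with $\cP_{d-1}=\cP_{d_{k''}''}\subb\cV_{(d-1)+(e-\Lambda''_{d-1})}$ and with $\cP_d\subb\cV^1=\cV_{d+e-1}$; the projection $f'$ keeps $(\cP_{d-1}\subb\cP_d)$ and the other projection keeps the flag $\cP_{d_1''}\subb\cdots\subb\cP_{d-1}$. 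I would then distinguish two cases according to $\Lambda''_{d-1}$, in each defining $\pi'$ and checking that the left‑hand square of~\eqref{PB_eq} commutes on the nose (both composites read off $\cP_d$).

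If $\Lambda''_{d-1}\geqslant2$, then $\Lambda'=(\Lambda''_1-1,\ldots,\Lambda''_{d-1}-1,0)$ has its unique zero row in the last position, so its $k$‑tuples are $(d_1'',\ldots,d_{k''}'',d)$ and $(e_1'',\ldots,e_{k''}'',e-1)$; comparing constraints identifies $F'$ with $\Flag_X(d,e-1;\Lambda')$ verbatim (the condition on $\cP_{d-1}=\cP_{d_{k''}''}$ being precisely the one attached in $\Lambda'$ to the step of rank $d_{k''}''=d-1$), so $\pi'$ is an isomorphism. If instead $\Lambda''_{d-1}=1$, then the constraint on $\cP_{d-1}$ reads $\cP_{d-1}\subb\cV_{d+e-2}$, the zero rows of $\Lambda'$ are $d_{k''-1}''+1,\ldots,d$ (with $d_0''=0$), so the $k$‑tuples of $\Lambda'$ are $(d_1'',\ldots,d_{k''-1}'',d)$ and $(e_1'',\ldots,e_{k''-1}'',e-1)$ and $c:=\zero(\Lambda')=d-d_{k''-1}''=(d_{k''}''-d_{k''-1}'')+1$ is \emph{odd}: here I invoke Condition~(iv) of Definition~\ref{even_def}, applicable since $e_{k''}''=e-1$ lies strictly between $0$ and $e$ (as $e\geqslant2$), to conclude that $d_{k''}''-d_{k''-1}''$ is even. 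In this case I would take $\pi':F'\to\Flag_X(d,e-1;\Lambda')$ to be the morphism forgetting $\cP_{d-1}$.

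To finish the case $\Lambda''_{d-1}=1$ I would fix the first $k''-1$ steps and quotient by the tautological subbundle $\cP_{d_{k''-1}''}$, \ie work over the base $B:=\Flag_X((d_1'',\ldots,d_{k''-1}''),(e_1'',\ldots,e_{k''-1}''))$ with the codimension‑one inclusion of vector bundles $W':=\cV_{d+e-2}/\cP_{d_{k''-1}''}\subb W:=\cV_{d+e-1}/\cP_{d_{k''-1}''}$ over $B$. The functor of points then exhibits $\Flag_X(d,e-1;\Lambda')$ as the Grassmann bundle $\Grass_B(c,W)$, exhibits $F'$ as the flag bundle $\Flag_B((c-1,c),(e-1,e-1))$ for the $2$‑step flag $W'\subb W$, and exhibits $\pi'$ as the structural projection; so, by Proposition~\ref{blowup_prop} applied to $W'\subb W$ over $B$ (its proof applies verbatim, with the integers $d$ and $e$ there replaced by $c$ and $e-1$), this $\pi'$ is the blow‑up of $\Flag_X(d,e-1;\Lambda')$ along $Z:=\Grass_B(c,W')$. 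As $Z$ is a Grassmann, hence flag, bundle, it is smooth over $X$, therefore regular (since $X$ is) and regularly embedded, and its codimension in $\Flag_X(d,e-1;\Lambda')$ equals $c$, which is odd; this is exactly the assertion of the lemma, the case $c=1$ --- which is the case $\Lambda''_{d-1}\geqslant2$ --- being the one where the blow‑up is an isomorphism. Finally, the right‑hand square of~\eqref{PB_eq} is cartesian by construction. The real content is thus the combinatorial bookkeeping of the $k$‑tuples of $\Lambda'$ in the two cases and the parity statement ``$c$ is odd'', together with the check that the quotiented data genuinely falls under the format of Proposition~\ref{blowup_prop} --- in particular that $W'\subb W$ is a subbundle in the strong sense of Definition~\ref{subb_def}; once this is in place, identifying the $\pi'$ of~\eqref{PB_eq} with the blow‑up projection is immediate from the functor‑of‑points description. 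I expect the reduction to Proposition~\ref{blowup_prop} to be the crux; should one prefer not to massage the data into that format, the fallback is to redo directly on $F'$ the local‑coordinate argument in the proof of that proposition, where $Z$ becomes a coordinate subspace of the standard affine charts of the Grassmann bundle and $F'$ the locus of the corresponding $2\times2$ minor relations.
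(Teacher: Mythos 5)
Your proof is correct and follows essentially the same route as the paper: you describe $F'$ by its functor of points, split on $\Lambda''_{d-1}>1$ versus $\Lambda''_{d-1}=1$, in the first case observe $\pi'$ is an isomorphism, and in the second case quotient by the tautological subbundle over the base corresponding to the first $k''-1$ steps so as to identify $\pi'$ with a blow-up of the form covered by Proposition~\ref{blowup_prop}, with codimension $\zero(\Lambda')$ odd by Condition~(iv) of Definition~\ref{even_def}. The only small additions over the paper's proof are that you spell out the parity argument explicitly (the paper merely asserts the codimension is odd) and you note that the two cases unify since $\zero(\Lambda')=1$ in the first case, which is a nice observation but not a different argument.
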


\begin{proof}
Let $k=k(\Lambda'')$,
$\multi{d}=\multi{d}(\Lambda'')$ and
$\multi{e}=\multi{e}(\Lambda'')$ as usual (Def.\,\ref{de_def}). We
need to distinguish two cases, namely $\Lambda''_{d-1}>1$ and $\Lambda''_{d-1}=1$.

Suppose first that $\Lambda''_{d-1}>1$. Then $k(\Lambda')=k+1$ and
$\multi{d}(\Lambda')$ and $\multi{e}(\Lambda')$ are just
$\multi{d}(\Lambda'')$ and $\multi{e}(\Lambda'')$ with one more
entry at the end, namely $d$ and $e-1$ respectively. We can
describe the pull-back in Diagram~\eqref{PB_eq} as follows (on
points):
$$\xy 0;<1ex,0ex>:
(4,15)*\xybox{
\xymatrix@C=0.6ex{
\{ \cP_{d-1} \ar@{}[r]|-{\subb} & \cP_d \ar@{}[r]|-{\subb} & \cV^1 \}
}
};
(45,15)*\xybox{
\xymatrix@C=0.6ex{
\{ \cP_{d-1} \ar@{}[r]|-{\subb} & \cV^1 \}
}
};
(0,0)*!(0,-2.5)\xybox{
\xymatrix@R=1.5ex@C=0.5ex{
\cP_{d_{1}} \ar@{}[d]|{\rotatebox{270}{$\subbeq$}} \ar@{}[r]|-{\subb} & \cdots \ar@{}[r]|-{\subb} & \cP_{d-1} \ar@{}[d]|{\rotatebox{270}{$\subbeq$}} \ar@{}[r]|-{\subb} & \cP_{d} \ar@{}[d]|{\rotatebox{270}{$\subbeq$}} \\
\cV_{d_1+e_1} \ar@{}[r]|-{\subb} & \cdots \ar@{}[r]|-{\subb} & \cV_{d-1+e_{k}} \ar@{}[r]|-{\subb} & \cV^1
}
}*\frm{\{}*\frm{\}};
(40,0)*!(0,-2.5)\xybox{
\xymatrix@R=1.5ex@C=0.5ex{
\cP_{d_{1}} \ar@{}[d]|{\rotatebox{270}{$\subbeq$}} \ar@{}[r]|-{\subb} & \cdots \ar@{}[r]|-{\subb} & \cP_{d-1} \ar@{}[d]|{\rotatebox{270}{$\subbeq$}} \\
\cV_{d_1+e_1} \ar@{}[r]|-{\subb} & \cdots \ar@{}[r]|-{\subb} & \cV_{d-1+e_{k}}
}
}*\frm{\{}*\frm{\}};
{\ar (9,5)*{};(9,13)*{}^{f'}};
{\ar (48,5)*{};(48,13)*{}};
{\ar (18,15)*{};(41.5,15)*{}};
{\ar (24.5,0)*{};(34.5,0)*{}};
(30,9)*{\labelstyle \Box};
\endxy$$

This pull-back $F'$ is $\Flag_X(d,e-1;\Lambda')$. So, take
$\pi'=\id$. The morphism $f'$ composed with $\tilde\pi$ sends a flag
$\cP_{d_1}\subb\cdots\subb\cP_{d-1}\subb\cP_d$ to $\cP_d$ and so
does~$\ff_{d,e-1;\Lambda'}$.

\smallbreak

Suppose now that $\Lambda''_{d-1}=1$. Then $k(\Lambda')=k$ and
$\multi{e}(\Lambda')=\multi{e}(\Lambda'')$, whereas
$\multi{d}(\Lambda')$ is obtained from $\multi{d}(\Lambda'')$ by
replacing its last entry by $d$. We can describe the pull-back in
Diagram~\eqref{PB_eq} as follows\,:
$$\xy 0;<1ex,0ex>:
(11,15)*\xybox{
\xymatrix@C=0.6ex{
\{ \cP_{d-1} \ar@{}[r]|-{\subb} & \cP_d \ar@{}[r]|-{\subb} & \cV^1 \}
}
};
(54,15)*\xybox{
\xymatrix@C=0.6ex{
\{ \cP_{d-1} \ar@{}[r]|-{\subb} & \cV^1 \}
}
};
(0,0)*!(0,-2.5)\xybox{
\xymatrix@R=1.5ex@C=0.5ex{
\cP_{d_{1}} \ar@{}[d]|{\rotatebox{270}{$\subbeq$}} \ar@{}[r]|-{\subb} & \cdots \ar@{}[r]|-{\subb} & \cP_{d_{k-1}} \ar@{}[d]|{\rotatebox{270}{$\subbeq$}} \ar@{}[r]|-{\subb} & \cP_{d-1} \ar@{}[d]|{\rotatebox{270}{$\subbeq$}} \ar@{}[r]|-{\subb} & \cP_{d} \ar@{}[d]|{\rotatebox{270}{$\subbeq$}} \\
\cV_{d_1+e_1} \ar@{}[r]|-{\subb} & \cdots \ar@{}[r]|-{\subb} &
\cV_{d_{k-1}+e_{k-1}} \ar@{}[r]|-{\subb} & \cV_{d+e-2}
\ar@{}[r]|-{\subb} & \cV^1
}
}*\frm{\{}*\frm{\}};
(50,0)*!(0,-2.5)\xybox{
\xymatrix@R=1.5ex@C=0.5ex{
\cP_{d_{1}} \ar@{}[d]|{\rotatebox{270}{$\subbeq$}} \ar@{}[r]|-{\subb} & \cdots \ar@{}[r]|-{\subb} & \cP_{d-1} \ar@{}[d]|{\rotatebox{270}{$\subbeq$}} \\
\cV_{d_1+e_1} \ar@{}[r]|-{\subb} & \cdots \ar@{}[r]|-{\subb} & \cV_{d+e-2}
}
}*\frm{\{}*\frm{\}};
{\ar (17,5)*{};(17,13)*{}^{f'}};
{\ar (57,5)*{};(57,13)*{}};
{\ar (25,15)*{};(50,15)*{}};
{\ar (36.5,0)*{};(44.5,0)*{}};
(38,9)*{\labelstyle \Box};
\endxy$$%
where $f'$ is the obvious morphism. The left-hand square of~\eqref{PB_eq} is defined by\,:
$$\xy 0;<1ex,0ex>:
(8,15)*\xybox{
\xymatrix@C=0.6ex{
\{ \cP_d \ar@{}[r]|-{\subb} & \cV^1 \}
}
};
(51,15)*\xybox{
\xymatrix@C=0.6ex{
\{ \cP_{d-1} \ar@{}[r]|-{\subb} & \cP_d \ar@{}[r]|-{\subb} & \cV^1 \}
}
};
(0,0)*!(0,-2.5)\xybox{
\xymatrix@R=1.5ex@C=0.5ex{
\cP_{d_{1}} \ar@{}[d]|{\rotatebox{270}{$\subbeq$}} \ar@{}[r]|-{\subb} & \cdots \ar@{}[r]|-{\subb} & \cP_{d_{k-1}} \ar@{}[d]|{\rotatebox{270}{$\subbeq$}} \ar@{}[r]|-{\subb} & \cP_d \ar@{}[d]|{\rotatebox{270}{$\subbeq$}} \\
\cV_{d_1+e_1} \ar@{}[r]|-{\subb} & \cdots \ar@{}[r]|-{\subb} & \cV_{d_{k-1}+e_{k-1}} \ar@{}[r]|-{\subb} & \cV^1}
}*\frm{\{}*\frm{\}};
(40,0)*!(0,-2.5)\xybox{
\xymatrix@R=1.5ex@C=0.5ex{
\cP_{d_{1}} \ar@{}[d]|{\rotatebox{270}{$\subbeq$}} \ar@{}[r]|-{\subb} & \cdots \ar@{}[r]|-{\subb} & \cP_{d_{k-1}} \ar@{}[d]|{\rotatebox{270}{$\subbeq$}} \ar@{}[r]|-{\subb} & \cP_{d-1} \ar@{}[d]|{\rotatebox{270}{$\subbeq$}} \ar@{}[r]|-{\subb} & \cP_{d} \ar@{}[d]|{\rotatebox{270}{$\subbeq$}} \\
\cV_{d_1+e_1} \ar@{}[r]|-{\subb} & \cdots \ar@{}[r]|-{\subb} &
\cV_{d_{k-1}+e_{k-1}} \ar@{}[r]|-{\subb} & \cV_{d+e-2}
\ar@{}[r]|-{\subb} & \cV^1
}
}*\frm{\{}*\frm{\}};
{\ar (11,5)*{};(11,13)*{}};
{\ar (57,5)*{};(57,13)*{}^{f'}};
{\ar (47,15)*{};(17,15)*{}_-{\tilde\pi}};
{\ar (34.5,0)*{};(27.5,0)*{}_-{\pi'}};
\endxy$$
The morphism $\pi':F'\to\Flag_X(d,e-1;\Lambda')$ simply drops $\cP_{d-1}$ in this case.

Let $\cV^2:=\cV_{d+e-2}$ and
$Y:=\Flag_X((d_1,\ldots,d_{k-1}),(e_1,\ldots,e_{k-1}),\cV_\smallbullet)$.
We have $\Flag_X(d,e-1;\Lambda')=\Grass_Y(d-d_{k-1},\cV^1/\Taut_{d_{k-1}})$ by Lemma~\ref{rel_lem}.
As in Definition~\ref{cell_def}, we consider the blow-up
$\Bl_Y(d-d_{k-1}\,,\,\cV^2/\Taut_{d_k-1}\subb\cV^1/\Taut_{d_k-1})$
of $\Grass_Y(d-d_{k-1},\cV^1/\Taut_{d_{k-1}})$ along the closed
regular immersion of~$\Grass_Y(d-d_{k-1},\cV^2/\Taut_{d_{k-1}})$. By
Proposition~\ref{blowup_prop}, this blow-up coincides with~$F'$ and
the morphism~$\pi$ of~\eqref{main-pts_eq} here becomes the above
morphism~$\pi'$. In other words, the following diagram commutes\,:
$$
\xymatrix{
\Flag_X(d,e-1;\Lambda') \ar@{=}[d]_{}
& F' \ar@{=}[d] \ar[l]_-{\pi'}
\\
\Grass_Y(d-d_{k-1},\cV^1/\Taut_{d_{k-1}})
& \Bl_Y(d-d_{k-1}, \,\cV^2/\Taut_{d_k-1}\subb\cV^1/\Taut_{d_{k-1}})\,.\! \ar[l]_-{\text{``}\pi\text{''}}
}
$$
The closed immersion $\Grass_Y(d-d_{k-1},\cV^2/\Taut_{d_{k-1}})\hookrightarrow \Grass_Y(d-d_{k-1},\cV^1/\Taut_{d_{k-1}})$ is of odd codimension equal to~$d-d_{k-1}$.
\end{proof}


\section{Main result}
\label{main_sec}%


We are now ready to state and prove the main result of the paper.

\begin{theo} \label{main_thm}%
Let $d,e\geqslant1$. Let $X$ be a regular scheme over~$\bbZ[\half]$.
Let $\cV$ be a vector bundle of rank $d+e$ which admits a complete
flag~\eqref{complete_eq} of subbundles (for instance, $\cV$
free). Then, the elements $\gen_{d,e}(\Lambda)$ of
Definition~\ref{gen-Witt_def}, for $\Lambda$ even, form a total
basis of the Witt groups of $\Grass_X(d,\cV)$ over $X$.
\end{theo}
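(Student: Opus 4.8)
The plan is to induct on $d+e=\rank(\cV)$, the assertion being that $\iso_{d,e}$ of Definition~\ref{iso_def} is an isomorphism of graded $\Wtot(X)$-modules. When $d=1$ or $e=1$ the scheme $\Grass_X(d,\cV)$ is the projective bundle $\bbP(\cV)$, the only even diagrams in \deframe\ are $\vide$ and $\rect{d}{e}$, and $\gen_{d,e}(\vide)=1$ while $\gen_{d,e}(\rect{d}{e})$ is, up to a unit, Walter's class $\xi$ of~\eqref{Pn_eq}; so the initial case follows from Remark~\ref{ind_rem} (or from Proposition~\ref{Pn_prop}). Assume now $d,e\geqslant 2$ and, inductively, that $\iso_{d,e-1}$ and $\iso_{d-1,e}$ are isomorphisms: these concern the Grassmannians $\Grass_X(d,\cV^1)$ and $\Grass_X(d-1,\cV^1)$ of the rank-$(d+e-1)$ bundle $\cV^1$ equipped with its truncated complete flag, and $d+(e-1)$, $(d-1)+e$ are both $<d+e$.

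First I would show that $\iso_{d,e-1}$, $\iso_{d,e}$ and $\iso_{d-1,e}$ assemble into a morphism from the split cyclic exact sequence of Corollary~\ref{exact-Lambda_cor} (with $R=\Wtot(X)$, whose free modules are the $\Free_X(-,-)$ of Definition~\ref{iso_def}) to the localization exact sequence~\eqref{LES-tot_eq} of Corollary~\ref{LES_cor}, the two sequences being compatibly graded by Remark~\ref{twist_rem}. Granting this, the theorem follows by the standard four-lemma-type chase: in a morphism of cyclic three-term exact sequences in which two of the three vertical maps are isomorphisms, so is the third; since all maps in sight are homogeneous of degree zero, $\iso_{d,e}$ will be an isomorphism of graded modules. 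Everything thus reduces to the commutativity of the three squares, attached to the pairs $(\bar\iota,\iota_*)$, $(\bar\kappa,\kappa)$ and $(\bar\bord,\bord)$.

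The square for $(\bar\kappa,\kappa)$ is the easiest. If $\Lambda$ is even with $\Lambda_d>0$ then $e_k<e$, so $\ff_\Lambda$ factors through the closed subscheme $\Grass_X(d,\cV^1)$; hence $\upsilon^*\gen_{d,e}(\Lambda)=0$ and $\kappa\,\gen_{d,e}(\Lambda)=0=\iso_{d-1,e}(\bar\kappa\Lambda)$. If $\Lambda_d=0$, Lemma~\ref{double-PB_lem} identifies the base-change of $\ff_{d,e;\Lambda}$ to the cell $U_X(d,\cV_\smallbullet)$ with the pull-back of $\ff_{d-1,e;\Lambda\restr{d-1,e}}$ along $\alpha$, so the base-change formula for push-forwards (the pull-back of a unit being a unit) gives $\upsilon^*\gen_{d,e}(\Lambda)=\alpha^*\gen_{d-1,e}(\Lambda\restr{d-1,e})$, i.e.\ $\kappa\,\gen_{d,e}(\Lambda)=\gen_{d-1,e}(\bar\kappa\Lambda)$. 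For $(\bar\iota,\iota_*)$ I would first treat the case $\zero(\Lambda')$ even: inspecting the associated tuples shows $\Flag_X(d,e;\bar\iota\Lambda')=\Flag_X(d,e-1;\Lambda')$ as $X$-schemes and $\ff_{d,e;\bar\iota\Lambda'}=\iota\circ\ff_{d,e-1;\Lambda'}$, so functoriality of push-forward yields $\iota_*\gen_{d,e-1}(\Lambda')=\gen_{d,e}(\bar\iota\Lambda')$.

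The delicate square is $(\bar\bord,\bord)$, and this is where I expect the real work to be. For $\Lambda''$ even with $\Lambda''_{d-1}$ odd, set $\Lambda'=\bar\bord\Lambda''$. Using the description of the connecting homomorphism $\bord$ by push-forwards and pull-backs from~\cite{BalmerCalmes08} applied to the cellular datum~\eqref{main_eq}, together with the cartesian square of Lemma~\ref{PB_lem} and the base-change formula, one rewrites $\bord\,\gen_{d-1,e}(\Lambda'')$ as $(\ff_{d,e-1;\Lambda'}\circ\pi')_*(1)=(\ff_{d,e-1;\Lambda'})_*\big(\pi'_*(1)\big)$; since Lemma~\ref{PB_lem} tells us $\pi'$ is either an isomorphism or the blow-up of $\Flag_X(d,e-1;\Lambda')$ along a regular closed subscheme of \emph{odd} codimension, Proposition~\ref{push-1_prop}(b) gives $\pi'_*(1)=1$, whence $\bord\,\gen_{d-1,e}(\Lambda'')=\gen_{d,e-1}(\bar\bord\Lambda'')$ (all equalities up to units, which are immaterial here). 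It then remains to handle the ``zero parts'' by exactness: for $\Lambda''$ even with $\Lambda''_{d-1}$ \emph{even}, the bijection of Proposition~\ref{exact-Lambda_prop}(b) and the already established $(\bar\kappa,\kappa)$-square put $\gen_{d-1,e}(\Lambda'')$ in $\im(\kappa)$, so $\bord\,\gen_{d-1,e}(\Lambda'')=0$ because $\bord\circ\kappa=0$; this completes the $(\bar\bord,\bord)$-square. Symmetrically, Proposition~\ref{exact-Lambda_prop}(c) and this square place $\gen_{d,e-1}(\Lambda')$ in $\im(\bord)$ whenever $\zero(\Lambda')$ is odd, so $\iota_*\gen_{d,e-1}(\Lambda')=0$ since $\iota_*\circ\bord=0$; this completes the $(\bar\iota,\iota_*)$-square. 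With all three squares commutative, the chase applies and the induction goes through. The genuine obstacle is the good part of the $(\bar\bord,\bord)$-square, resting on Lemma~\ref{PB_lem}, the geometric formula for the (generally nonzero) connecting homomorphism of~\cite{BalmerCalmes08}, and the odd-codimension blow-up input of Proposition~\ref{push-1_prop} — precisely the point where Witt groups part ways with the sister cohomology theories.
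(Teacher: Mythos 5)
Your proof is correct and follows essentially the same route as the paper: induction on $\rank\cV$, base case from Walter's projective-bundle formula, and in the inductive step a map of three-term exact sequences from Corollary~\ref{exact-Lambda_cor} to the localization sequence~\eqref{LES-tot_eq}, with commutativity of the three squares established exactly as in the paper's Proposition~\ref{push-pull-bord_prop} (Lemma~\ref{double-PB_lem} and base change for $\kappa$, equality of flag schemes for $\iota_*$, and Lemma~\ref{PB_lem} plus the odd-codimension blow-up input of Proposition~\ref{push-1_prop} for the connecting map $\bord$), followed by the five-lemma. The one small deviation is your direct treatment of the vanishing case $\kappa(\gen_{d,e}(\Lambda))=0$ for $\Lambda_d>0$, observing that $e_k<e$ forces $\ff_\Lambda$ to factor through the closed subscheme $\Grass_X(d,\cV^1)$ so that $\upsilon^*\ff_{\Lambda*}=0$; this is a valid and slightly more economical alternative to the paper's argument, which instead deduces this vanishing from exactness and the already-established $(\bar\iota,\iota_*)$-square. (Minor imprecision: the horizontal maps $\bar\iota,\bar\kappa,\bar\bord$ and $\iota_*,\kappa,\bord$ are not of degree zero; what matters, and what Proposition~\ref{deg_prop} and Remark~\ref{twist_rem} guarantee, is that corresponding horizontals have matching degree, while the verticals $\iso_{-,-}$ are degree zero, so the five-lemma still applies gradedwise.)
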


Theorem \ref{main_thm} is unchanged when the $\gen_{d,e}(\Lambda)$ are changed to \lax-similar elements, so it holds independently of all choices in the construction of~$\gen_{d,e}(\Lambda)$, as mentioned in Remark~\ref{rem:choices}.
Furthermore, \lax\ pull-backs, push-forwards and connecting homomorphisms are compatible with \lax-similarity by Remark~\ref{simpushpull_rema}. We therefore use the following convention\,:
\begin{conv} \label{lax_conv}
All pull-backs, push-forwards and connecting homomorphisms in the rest of Section \ref{main_sec} are {\em \lax}. 
\end{conv}
This precisely means that they are classical pull-backs, push-forwards and connecting homomorphisms, preceded or followed by alignment morphisms in order to start and end in Witt groups with specific line bundles, chosen for the various statements to make sense. These alignments exist and the freedom in their choices has no influence on the \lax-similarity classes considered, by the results of \cite{Balmer11}, as summarized in Section~\ref{tot-form_sec}.

\begin{rema}
\label{ind_rem}%
The case $d=1$ or $e=1$ is that of the projective bundle
$\bbP(\cV)$. In that case, Walter~\cite{Walter03} has proved such a
result (see~\cite[Proposition~8.1]{Walter03} when $\cV$ has a
complete flag). One can check that his generator $\xi$ is
\lax-similar in the sense of Definition~\ref{laxsim_defi} to our
$\gen(\Lambda)$ when $\Lambda=\rect{d}{e}$ is the only non-empty
even diagram in
\deframe. The same result was also proved in the case
$\cV=\cO_X^{d+e}$ in~\cite{Balmer05}, using methods closer to the
present geometric philosophy. We could therefore assume the starting
point of the induction (\ie the case $d=1$ or $e=1$). However our
proof of the induction step covers this ``fringe" case as well.
\end{rema}

\begin{proof}[Proof of Theorem~\ref{main_thm}]
The proof occupies the rest of Section~\ref{main_sec}. We proceed by induction
on~$\rank(\cV)=d+e$. The initial case $d=e=1$ will follow from the
same proof, except that we shall not use any induction hypothesis in
that case. In other words, from now on, \emph{when $d+e\geq 2$, we
can assume the result for vector bundles $\cV$ of rank less than
$d+e$}. We fix a complete flag
$$
0=\cV_0\subb\cV_1\subb\cdots\subb\cV_{d+e}=\cV\,.
$$
We set as before $\cV^1:=\cV_{d+e-1}$.

The idea of the proof is to use Theorem~\ref{thm:localization} in
the localization setting of Section~\ref{cell-dec_sec}, \ie for the
regular closed immersion $\iota:Z=\Grass_X(d,\cV^1) \hook
\Grass_X(d,\cV)$ with open complement $U=U_X(d,\cV_\smallbullet)$.
For every line bundle $L$ on $\Grass_X(d,\cV)$, the localization
long exact sequence~\eqref{eq:LES} reads here
\begin{equation}
\label{LES_eq}%
\end{equation}
\vglue-3em
$$
\vcenter{\xymatrix@C=2.7ex{ \cdot\!\cdot\!\cdot \ar[r] &
\Wcoh^{i}_Z(\Grass_X(d,\cV),L) \ar[r]^-{\ext} &
\Wcoh^{i}(\Grass_X(d,\cV),L) \ar[r]^-{\upsilon^*} &
\Wcoh^i(U_X(d,\cV_\smallbullet),\upsilon^*L)
\ar`d[l]`[dll]^{\bord}[dll]
& \\
&
\Wcoh^{i+1}_Z(\Grass_X(d,\cV),L) \ar[r]^-{\ext} &
\Wcoh^{i+1}(\Grass_X(d,\cV),L) \ar[r]^-{\upsilon^*} &
\Wcoh^{i+1}(U_X(d,\cV_\smallbullet),\upsilon^*L) \ar[r] &
\cdot\!\cdot\!\cdot
\\
}}
$$
where $\ext: \Wcoh^i_Z(\Grass_X(d,\cV),L) \to
\Wcoh^i(\Grass_X(d,\cV),L)$ is extension of support.

\begin{rema} \label{skipspecialcases_rema}
The reader will notice along the proof that when $d=1$ (resp.\ $e=1$), the situation is somewhat particular. This is because the map $\upsilon^*$ (resp.\ $\iota^*$) on relative Picard groups modulo~$2$ is no longer injective. So, some adjustments need to be made. In a first reading, it might be a good idea to skip these special cases to follow more easily the structure of the proof.
\end{rema}

Before applying Theorem~\ref{thm:localization}, we first need to use
d\'evissage to obtain a total basis of the Witt groups of
$\Grass_X(d,\cV)$ with support in $\Grass_X(d,\cV^1)$, from a total
basis of the Witt groups of $\Grass_X(d,\cV^1)$. Let
$(\iota_{\!Z})_*$ be the push-forward along $\iota$ from the Witt
groups of $Z$ to the Witt groups with support in $Z$. We therefore
have $\ext \circ (\iota_{\!Z})_* = \iota_*$, where $\ext$ is the
extension of support from $Z$ to the whole~$\Grass_X(d,\cV)$.

\begin{lemm} \label{devissagegen_lemm}
Assume $e\geq 2$ (and recall Convention \ref{lax_conv}). The elements
$$(\iota_{\!Z})_*\big(\gen_{d,e-1}(\Lambda'')\big) \qquad \text{with $\Lambda''$ even $(d,e-1)$-diagram}$$
form a total basis of the Witt groups of
$\Grass_X(d,\cV)$ with support in $Z=\Grass_X(d,\cV^1)$.
\end{lemm}

\begin{proof}
Since $e\geq 2$ the map $\iota^*:\Pic_X(\Grass_X(d,\cV))/2 \to
\Pic_X(\Grass_X(d,\cV^1))/2$ is an isomorphism (see
Remark~\ref{Pic-can_rem}). We can therefore
apply~\cite[Corollary~6.16]{Balmer11} with
$P=\Pic_X(\Grass_X(d,\cV))$, using that the
$\gen_{d,e-1}(\Lambda'')$ form a total basis of the Witt groups of
$\Grass_X(d,\cV^1)$ by the induction assumption.
\end{proof}

When $e=1$, the situation is slightly different. Since
$\Grass_X(d,\cV^1)=X$, the map $\iota^*$ is no longer injective on
Picard groups modulo $2$, and there are two essentially different
\lax\ push-forwards. Indeed, in that case
$\Pic_X(\Grass_X(d,\cV))/2=\bbZ/2\cdot \Det_d$ and $\Pic_X(X)/2=1$.
Observe the convenient
$\Pic_X(\Grass_X(d,\cV))/2=\{\Det_d^d\,,\,\Det_d^{d+1}\}$.

\begin{defi}
Assume $e=1$ and therefore $\Grass_X(d,\cV^1)=X$. Let $L_d$ (resp.\ $L_{d+1}$) be a line bundle on $\Grass_X(d,\cV)$ such that
$[L_d]=\Det_d^{d} \in \Pic_X(\Grass_X(d,\cV))/2$ (resp.\ $[L_{d+1}]=\Det_d^{d+1}$). Let $\genpt_{d}$ (resp.\ $\genpt_{d+1}$)
be a \lax\ push-forward of the unit form $1_X$ to
$\Wcoh^*_X(\Grass_X(d,\cV),L_{d})$ (resp.\ to
$\Wcoh^*_X(\Grass_X(d,\cV),L_{d+1})$) along $\iota:X\hook
\Grass_X(d,\cV)$.
\end{defi}

\begin{lemm}[Analogue of Lemma~\ref{devissagegen_lemm} in special case]
Assume $e=1$. Then
\begin{enumerate}[(a)]
\item \label{genptbasisd_item} The element $\genpt_d$ (resp.\ $\genpt_{d+1}$) forms a total basis of the $\{\Det_d^d\}$-part (resp.\ the $\{\Det_{d}^{d+1}\}$-part) of the Witt groups of $\Grass_X(d,\cV)$ with support in $X=\Grass_X(d,\cV^1)$.
\item \label{genptbasis_item} Together, $\genpt_d$ and $\genpt_{d+1}$ form a total basis of the Witt groups of $\Grass_X(d,\cV)$ with support in $X=\Grass_X(d,\cV^1)$.
\end{enumerate}
\end{lemm}
\begin{proof}
Part \eqref{genptbasisd_item} follows
from~\cite[Corollary~6.15]{Balmer11}, applied to the subset
$P=\{\Det_d^d\}$ (resp.\ $P=\{\Det_d^{d+1}\}$). Part
\eqref{genptbasis_item} follows from \eqref{genptbasisd_item} since
the union of total bases of disjoint parts $P_1$ and $P_2$ forms a
total basis of the part $P=P_1 \cup P_2$. See
\cite[Lemma~6.10]{Balmer11}. Here we use
$\Pic_X(\Grass_X(d,\cV))/2=\{\Det_d^d\}\sqcup \{\Det_d^{d+1}\}$.
\end{proof}

We need to track our classes $\gen_{d,e}(\Lambda)$ along the morphisms involved in the long exact sequence of localization. Recall the constructions $\barpush$, $\barres$ and $\barbord$ on even Young diagrams (Def.\,\ref{bar_defi}) and the \lax-similarity equivalence relation $\weq$ (Def.\,\ref{laxsim_defi}).

\begin{prop}
\label{push-pull-bord_prop}%
With the above notation (and with Convention \ref{lax_conv}), we have
\begin{enumerate}[(a)]
\item \label{iotapush_item}
Let $\Lambda'$ be an even Young diagram in
$(d\!\times\!(e-1))$-frame, with $d\geq 1$, $e\geq 2$. The
push-forward $\iota_*$ satisfies (see Figure~\ref{push_fig}):
$$
\iota_*(\gen_{d,e-1}(\Lambda')) \weq
\iso_{d,e}(\barpush(\Lambda')) \qquad \text{if $\zero(\Lambda')$ is
even.}
$$
\item
Let $\Lambda$ be an even Young diagram in \deframe, with $d\geq 2$
and $e\geq 1$. The restriction morphism $\upsilon^*$ satisfies (see
Figure~\ref{pull_fig}):
$$
\upsilon^*\big(\gen_{d,e}(\Lambda)\big) \weq
\alpha^*\big(\iso_{d-1,e}(\barres(\Lambda))\big) \qquad \text{if
$\Lambda_d=0$.}
$$
\item
Let $\Lambda''$ be an even Young diagram in $((d-1)\!\times\!e)$-frame,
with $d,e \geq 2$. The connecting homomorphism $\bord$ satisfies
(see Figure~\ref{bord_fig}):
$$
\bord\big(\alpha^*(\gen_{d-1,e}(\Lambda''))\big) \weq
(\iota_{\!Z})_*\big(\iso_{d,e-1}(\barbord(\Lambda''))\big) \qquad
\text{if $\Lambda''_{d-1}$ is odd.}
$$
\end{enumerate}
\end{prop}

\begin{proof}
\noindent\textit{(a).} Let $\Lambda'$ be an even $(d,e-1)$-diagram
such that $\zero(\Lambda')$ is even. Let $\multi{d}$ and $\multi{e}$
be the corresponding $k$-tuples (Def.\,\ref{de_def}). By assumption
we can consider the even $(d,e)$-diagram
$\barpush(\Lambda'):=(\Lambda'_1+1,\ldots,\Lambda'_d+1)$. Observe
that it has the same associated $k$-tuples (with respect to the
\deframe). From~\eqref{Flag-Lambda_eq}, it is then easy to see that
$\Flag_{X}(d,e-1;\Lambda')=\Flag_{X}(d,e;\barpush(\Lambda'))$ and
that the diagram
$$\xymatrix{
\Grass_X(d,\cV^1) \ar[r]^-{\iota} & \Grass_X(d,\cV)
\\
\Flag_{X}(d,e-1;\Lambda') \ar[u]^{\ff_{d,e-1;\Lambda'}} \ar@{=}[r] &
\Flag_{X}(d,e;\barpush(\Lambda'))
\ar[u]_{\ff_{d,e,\barpush(\Lambda')}} }$$
commutes. Thus, (a) follows by composition of push-forwards and by
Theorem~\ref{laxpush_thm}. 

\medbreak

\noindent\textit{(b).} Let $\Lambda$ be an even $(d,e)$-diagram such
that $\Lambda_d=0$. Let $\multi{d}$ and $\multi{e}$ be the
corresponding $k$-tuples (Def.\,\ref{de_def}). By assumption we can
consider the even $(d-1,e)$-diagram
$\barres(\Lambda):=\Lambda\restr{d-1,e}$. We then have two cartesian
squares\,:
$$
\xymatrix{ \Grass_X(d,\cV) \ar@{}[dr]|{\Box}
& \ U_X(d,\cV_\smallbullet)
\ar@{_{(}->}[l]_-{\upsilon} \ar[r]^-{\alpha} \ar@{}[dr]|{\Box} &
\Grass_X(d-1,\cV^1)
\\
\Flag_X(d,e;\Lambda) \ar[u]^{\ff_{d,e;\Lambda}}
& \ U' \ar@{_{(}->}[l] \ar[r] \ar[u]
& \Flag_{X}(d-1,e;\barres(\Lambda))
\ar[u]_{\ff_{d-1,e;\barres(\Lambda)}} }
$$
by Lemma~\ref{double-PB_lem}. The equality in (b) then follows by
base change on the above two cartesian squares
(see~\cite[Theorem~5.5]{Calmes11}, the horizontal morphisms are
smooth, so flat, and the vertical maps are proper, including $U'\to
U_X(d,\cV_\smallbullet)$). Using Remark~\ref{laxpull_rema} and
Theorem~\ref{laxpush_thm}, both sides of the desired equation
are \lax-similar to the push-forward of the same
$1_{U'}\in\Wcoh^0(U')$ along $U'\to U_X(d,\cV_\smallbullet)$.

\medbreak

\noindent\textit{(c).} Let $\Lambda''$ be an even $(d-1,e)$-diagram
such that $\Lambda''_{d-1}$ is odd. Let $\multi{d}$ and $\multi{e}$
be the corresponding $k$-tuples (Def.\,\ref{de_def}). By assumption,
we can consider the even $(d,e-1)$-diagram
$$
\barbord(\Lambda''):=(\Lambda''_1-1,\ldots,\Lambda''_{d-1}-1,0)\,.
$$

The key tool here is~\cite[Theorem~2.6]{Balmer09} which allows us to
compute $\partial$ geometrically, as the pull-back to the
exceptional fiber along
$\tilde\alpha\circ\tilde\iota:\Exc_X(d,\cV_\smallbullet)\to
\Grass_X(d-1,\cV^1)$ followed by the push-forward along
$\tilde\pi:\Exc_X(d,\cV_\smallbullet)\to \Grass_X(d,\cV^1)$, as long
as the twist of the Witt class under consideration satisfies the
assumptions of~\cite[Theorem~2.6]{Balmer09}.

Here, the twist $\Det_{d-1}^{\twist(\Lambda'')}$ of $\gen_{d-1,e}(\Lambda'')$ is
given by $\twist(\Lambda'')=[d+1]\in\bbZ/2$. Using
Diagram~\eqref{Picards_eq} and Equation~\eqref{rel-can-Bl_eq}, one
can easily check that this is the twist for which
of~\cite[Theorem~2.6]{Balmer09} applies. By Lemma~\ref{PB_lem}, we
have the right-hand cartesian square in the following commutative
diagram\,:
$$
\xymatrix{
\Grass_X(d,\cV^1)
& \Exc_X(d,\cV_\smallbullet) \ar[l]_-{\tilde\pi} \ar[r]^-{\tilde\alpha\tilde\iota} \ar@{}[dr]|{\Box} &
\Grass_X(d-1,\cV^1)
\\
\Flag_X(d,e-1;\barbord(\Lambda''))
\ar[u]^{\ff_{d,e-1;\barbord(\Lambda'')}}
& F' \ar[u]\ar[l]_-{\pi'}\ar[r]
& \Flag_X(d-1,e;\Lambda'')\,. \ar[u]_{\ff_{d-1,e;\Lambda''}} }$$
We can now compute $\bord(\alpha^*(\gen_{d-1,e}(\Lambda'')))$ by
starting with the unit form 1 in the lower right corner,
pushing-forward along $\ff_{d-1,e;\Lambda''}$, pulling back along
$\tilde\alpha\tilde\iota$ and then pushing forward along
$\tilde\pi$. By base-change on the right-hand square, this class
$\bord(\alpha^*(\gen_{d-1,e}(\Lambda'')))$ is also the (lax)
push-forward of the unit form on~$F'$ along $\pi'$ and then along
$\ff_{d,e-1;\barbord(\Lambda'')}$. The key point is to check that
$\pi'_*$ preserves the unit form, up to \lax-similitude. By
Lemma~\ref{PB_lem}, we know that $\pi'$ is either an isomorphism or
a blow-up along a closed regular immersion of odd codimension. In
both cases $\pi'_*(1)\weq 1$ by Proposition~\ref{push-1_prop} and we
get the result.
\end{proof}

\begin{prop}[Analogue of Proposition~\ref{push-pull-bord_prop} in special cases] \label{push-pull-bord-special_prop}
When $e=1$, the push-forward $\iota_*:\Wcoh^0(X,\cO_X) \to
\Wcoh^d(\Grass_X(d,\cV),L_{d+1})$ satisfies
\begin{equation}
\iota_*(1_X) =\ext(\genpt_{d+1}) \weq \gen_{d,1}(\rect{d}{1}).
\end{equation}
When $d=1$, the restriction morphism $\upsilon^*$ satisfies
\begin{equation} \label{upsilonspecial_eq}
\upsilon^*(\gen_{1,e}(\vide)) \weq \alpha^*(1_X).
\end{equation}
When $e=1$ and $d\geq 2$, the connecting homomorphism of the
localization sequence with twist $L_d$ satisfies
\begin{equation}
\bord\big(\alpha^*(\gen_{d-1,1}(\rect{(d-1)}{e}))\big) \weq
\genpt_{d}\,.
\end{equation}
When $e=d=1$, the connecting homomorphism of the localization
sequence with twist $L_d=L_1$ satisfies
\begin{equation}
\bord\big(\alpha^*(1_X)\big) \weq \genpt_{1}\,.
\end{equation}
\end{prop}
\begin{proof}
The proof is entirely analogous to the one of
Proposition~\ref{push-pull-bord_prop}, so we leave the details to
the reader.
\end{proof}

We now apply localization, using Theorem~\ref{thm:localization}, in
which a part $P$ of the relative Picard group modulo $2$ needs to be
specified. Again, there is a general case, when $d\geq 2$, and a
particular case where $d=1$.

\medskip
\noindent\textit{Case $d,e\geq 2$}. We choose $P$ to be the whole
$\Pic_X(\Grass_X(d,\cV))/2$. Note that we indeed have $\upsilon^*:
\Pic_X(\Grass_X(d,\cV)/2 \to \Pic_X(U_X(d,\cV_\smallbullet))/2$
injective by \eqref{Picards_eq}. By induction and by Lemma~\ref{devissagegen_lemm},
the $\gen_{d,e-1}(\Lambda')$, $\Lambda'$ even, form a total basis of
the Witt groups of $\Grass_X(d,\cV)$ with support in
$\Grass_X(d,\cV^1)$. By induction, we also know that the
$\gen_{d-1,e}(\Lambda'')$, $\Lambda''$ even, form a total basis of
the Witt groups of $\Grass_X(d-1,\cV^1)$. Homotopy invariance
and~\cite[Corollary~6.13]{Balmer11} ensure that the
$\alpha^*(\gen_{d-1,e}(\Lambda''))$ form a total basis of the Witt
groups of $U_X(d,\cV_\smallbullet)$. The correspondence of
Proposition~\ref{push-pull-bord_prop} and
Proposition~\ref{exact-Lambda_prop} then show that assumptions
\eqref{eweq_item}, \eqref{upsilonweq_item} and \eqref{bordweq_item}
of Theorem~\ref{thm:localization} are satisfied when choosing the
following collections of Witt classes\,:
$$\SetZ=
\xymatrix@R=.3em{ \left\{\Ourfrac{\text{even Young
}(d,e-1)\text{-diagrams}}{\Lambda'\text{ such that
}\zero(\Lambda')\text{ is even}}\right\} \ar@{->}[r]^-{\simeq}_-{\barpush} &
\left\{\Ourfrac{\text{even Young
}(d,e)\text{-diagrams}}{\Lambda\text{ such that
}\Lambda_d>0}\right\}
}
$$
$$
v_{\Lambda'}=(\iota_Z)_*(\gen_{d,e-1}(\Lambda'))\qquad
\text{and}\qquad w'_{\Lambda'}=\gen_{d,e}(\barpush(\Lambda'))\weq
\ext(v_{\Lambda'})\qquad\text{for all }\Lambda'\in\SetZ
$$
$$
\SetX=
\xymatrix@R=.3em{ \left\{\Ourfrac{\text{even Young
}(d,e)\text{-diagrams}}{\Lambda\text{ such that
}\Lambda_d=0}\right\} \ar@{->}[r]^-{\simeq}_-{\barres} &
\left\{\Ourfrac{\text{even Young
}(d-1,e)\text{-diagrams}}{\Lambda''\text{ such that
}\Lambda''_{d-1}\text{ is even}}\right\}
}
$$
$$
w_{\Lambda}=\gen_{d,e}(\Lambda)\qquad \text{and}\qquad
u'_{\Lambda}=\alpha^*(\gen_{d-1,e}(\barres(\Lambda)))\weq
\upsilon^*(w_{\Lambda})\qquad\text{for all }\Lambda\in\SetX
$$
$$
\SetU=
\xymatrix@C=4ex@R=.3em{ \left\{\Ourfrac{\text{even Young
}(d-1,e)\text{-diagrams}}{\Lambda''\text{ such that
}\Lambda''_{d-1}\text{ is odd}}\right\} \ar@{->}[r]^-{\simeq}_{\barbord} &
\left\{\Ourfrac{\text{even Young
}(d,e-1)\text{-diagrams}}{\Lambda'\text{ such that
}\zero(\Lambda')\text{ is odd}}\right\}
}
$$
$$
u_{\Lambda''}=\alpha^*(\gen_{d-1,e}(\Lambda''))\ \text{and}\ 
v'_{\Lambda''}=(\iota_Z)_*(\gen_{d,e-1}(\barbord(\Lambda'')))\weq
\bord(u_{\Lambda''})\quad \text{for all $\Lambda''\in\SetU$}
$$
We therefore conclude by part \eqref{basisinduction_item} of
Theorem~\ref{thm:localization} that we have a total basis
$\{\gen_{d,e}(\Lambda)\}_{\Lambda}$ indexed by $\Lambda$ in
$\{\Lambda\text{ even with
}\Lambda_d=0\}\sqcup \{\Lambda\text{ even with
}\Lambda_d>0\}$, \ie by all even Young diagrams in \deframe. This is the result.

\medskip
\noindent\textit{Case $d\geq 2$, $e=1$.} The proof is the same as in
the case $d,e\geq 2$, except that
Proposition~\ref{push-pull-bord-special_prop} is used instead of
Proposition~\ref{push-pull-bord_prop}. We take (see Figure
\ref{specialCases1_fig})
$$\begin{array}{lll}
\SetZ=\left\{\star\right\}, & v_{\star}=\genpt_{d+1}, & w'_{\star}=\gen_{d,1}(\rect{d}{1})\weq \ext(v_{\star}),\vspace{1ex} \\
\SetX=\left\{\vide\right\}, & w_\vide=\gen_{d,1}(\vide), & u'_\vide=\alpha^*(\gen_{d-1,1}(\vide))\weq \upsilon^*(w_\vide),\vspace{1ex} \\
\SetU=\left\{\diamond\right\}, &
u_{\diamond}=\alpha^*(\gen_{d-1,1}(\vide)), &
v'_{\diamond}=\genpt_{d}\weq \bord(u_{\vide}).
\end{array}$$
\begin{figure}[!ht]
\begin{center}
\begin{tikzpicture}[y=-1cm]
\bigarrowtipdef

\path[draw=black,fill=black!0,arrows=|-biggertip] (3.1,4.7) -- (4.3,4.7);
\path[draw=black,fill=black!0,arrows=|-biggertip] (5.6,3) -- (6.6,3);
\path[draw=black,fill=black!0,arrows=|-biggertip] (8.0,4.6) -- (9.1,3.4);
\path (2.3,3.2) node[text=black,anchor=base west] {$\diamond$};
\path (2.3,4.8) node[text=black,anchor=base west] {$\star$};
\path (9.2,3.2) node[text=black,anchor=base west] {$\diamond$};
\path (9.2,4.8) node[text=black,anchor=base west] {$\star$};
\path (3.9,2.4) node[text=black,anchor=base west] {$\Grass(d,\cV_{d+1})$};
\path (9.2,2.4) node[text=black,anchor=base west] {$X$};
\path (2.3,2.4) node[text=black,anchor=base west] {$X$};
\path (6.3,2.4) node[text=black,anchor=base west] {$\Grass(d-1,\cV_d)$};

\draw[thick,black] (7.6,2.6) rectangle (7.2,3.4);
\draw[thick,black] (5.2,2.6) rectangle (4.8,3.8);
\path[draw=black,thick,fill=black!50] (7.6,4.2) rectangle (7.2,5);
\path[draw=black,thick,fill=black!50] (5.2,4) rectangle (4.8,5.2);
\path (5.9,5.8) node[text=black,anchor=base west] {$\barres$};
\path (8.3,5.8) node[text=black,anchor=base west] {$\barbord$};
\path (3.5,5.8) node[text=black,anchor=base west] {$\barpush$};
\end{tikzpicture}
\caption{Image of the diagrams by the morphisms $\barpush$,
$\barres$ and $\barbord$ when $d\geq 2$ and $e=1$. No arrow means
the corresponding generator is mapped to zero.}
\label{specialCases1_fig}%
\end{center}
\end{figure}

\noindent\textit{Case $d=1$, $e\geq 2$.} In that case,
$\Grass_X(d-1,\cV^1)=\Grass_X(0,\cV^1)=X$, so we can no longer
choose $P$ to be the whole relative Picard group modulo $2$ because
injectivity of $\upsilon^*$ would not hold (see
Remark~\ref{Pic-can_rem}). We therefore first choose
$P=P_1:=\{\Det_d^{d+1}\}=\{1\}$ and take (see $P_1$-part of Figure~\ref{specialCases2_fig})
$$\begin{array}{ll}
\SetZ=\left\{\rect{1}{(e-1)}\right\}, & v_{\rect{1}{(e-1)}}=(\iota_{\!Z})_*(\gen_{1,e-1}(\rect{1}{(e-1)})), \vspace{1ex}\\
 & w'_{\rect{1}{(e-1)}}=\gen_{1,e}(\rect{1}{e})\weq \ext(v_{\rect{1}{(e-1)}}), \vspace{1ex}\\
\SetX=\left\{\vide\right\}, & w_\vide=\gen_{1,e}(\vide), \vspace{1ex}\\
& u'_\vide=\alpha^*(1_X)\weq \upsilon^*(w_\vide), \vspace{1ex}\\
\SetU=\emptyset. \\
\end{array}$$
As in previous cases, by part \eqref{iotapush_item} of
Proposition~\ref{push-pull-bord_prop}, by equation
\eqref{upsilonspecial_eq} and by induction, using
Theorem~\ref{thm:localization}, we get that
$\gen_{1,e}(\rect{1}{e})$ and $\gen_{1,e}(\vide)$ form a basis of
the $\{1\}$-part of the Witt groups of $\Grass_X(1,\cV)$.

We then choose $P=P_2:=\{\Det_d\}$, and take (see $P_2$-part of Figure~\ref{specialCases2_fig})
$$\begin{array}{lll}
\SetZ=\SetX=\emptyset, & & \vspace{1ex}\\
\SetU=\left\{\star\right\} & u_{\star}=\alpha^*(1_X), &
v'_{\star}=(\iota_{\!Z})_*(\gen_{1,e}(\vide))\weq \bord(u_{\star}).
\end{array}$$
By Theorem~\ref{thm:localization} using \eqref{upsilonspecial_eq},
we obtain that the empty set is a basis of the $\{\Det_d\}$-part of
the Witt groups of $\Grass_X(1,\cV)$.

Putting together both bases (one of which is empty), we obtain a
basis of the Witt groups of $\Grass_X(d,\cV)$
by~\cite[Lemma~6.10]{Balmer11}, since $P_1 \sqcup
P_2=\Pic_X(\Grass_X(1,\cV))$. This basis contains exactly the only
two generators corresponding to even diagrams of size $(1,e)$, \ie
$\vide$ and $\rect{1}{e}$.  See Figure~\ref{specialCases2_fig}.
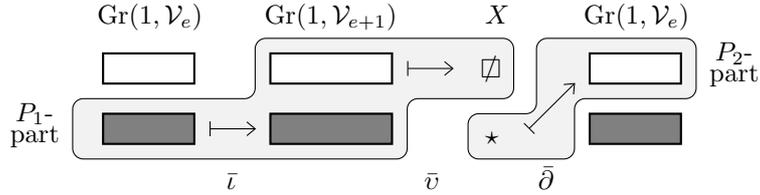
\begin{figure}[!ht]
\begin{center}
\begin{tikzpicture}[y=-1cm]
\pgfarrowsdeclare{biggertip}{biggertip}{
  \setlength{\arrowsize}{.4pt}
  \addtolength{\arrowsize}{.5\pgflinewidth}
  \pgfarrowsrightextend{0}
  \pgfarrowsleftextend{-5\arrowsize}
}{
  \setlength{\arrowsize}{0.4pt}
  \addtolength{\arrowsize}{.3\pgflinewidth}
  \pgfpathmoveto{\pgfpoint{-6\arrowsize}{5\arrowsize}}
  \pgfpathlineto{\pgfpointorigin}
  \pgfpathlineto{\pgfpoint{-6\arrowsize}{-5\arrowsize}}
  \pgfusepathqstroke
}
\path[draw=black,fill=black!0,arrows=|-biggertip] (3.4,1) -- (4,1);
\path[draw=black,fill=black!0,arrows=|-biggertip] (6,0.2) -- (6.6,0.2);
\path[draw=black,fill=black!0,arrows=|-biggertip] (7.6,1.0) -- (8.2,0.4);
\path (8.2,-0.4) node[text=black,anchor=base west] {$\Grass(1,\cV_e)$};
\path (1.8,-0.4) node[text=black,anchor=base west] {$\Grass(1,\cV_e)$};
\path (4,-0.4) node[text=black,anchor=base west] {$\Grass(1,\cV_{e+1})$};
\path (6.9,-0.4) node[text=black,anchor=base west] {$X$};
\path (6.9,1.2) node[text=black,anchor=base west] {$\star$};
\draw (7.1,0.2) node[text=black, anchor=center] {$\!\vide\!$};

\draw (1.1,0.8) node (p1) [text=black] {$P_1$-};
\draw (p1)+(0,-2ex) node (p1) [text=black] {part};
\draw (10.3,0) node (p2) [text=black] {$P_2$-};
\draw (p2)+(0,-2ex) node (p2) [text=black] {part};
\path[draw=black,thick,fill=black!50] (2,0.8) rectangle (3.2,1.2);
\path[draw=black,thick,fill=black!50] (8.4,0.8) rectangle (9.6,1.2);
\path[draw=black,thick,fill=black!50] (4.2,0.8) rectangle (5.8,1.2);
\filldraw[fill=white,thick,draw=black] (2,0) rectangle (3.2,0.4);
\filldraw[fill=white,thick,draw=black] (8.4,0) rectangle (9.6,0.4);
\filldraw[fill=white,thick,draw=black] (4.2,0) rectangle (5.8,0.4);
\path (6.1,1.8) node[text=black,anchor=base west] {$\barres$};
\path (7.6,1.8) node[text=black,anchor=base west] {$\barbord$};
\path (3.5,1.8) node[text=black,anchor=base west] {$\barpush$};
\begin{pgfonlayer}{background}
\draw[fill=black!10,fill opacity=0.5,rounded corners] (1.6,0.6) -- (4,0.6) -- (4,-0.2) -- (7.4,-0.2) -- (7.4,0.6) -- (6,0.6) -- (6,1.4) -- (1.6,1.4) -- cycle;
\draw[fill=black!10,fill opacity=0.5,rounded corners] (6.8,0.8) -- (7.7,0.8) -- (7.7,-0.2) -- (9.8,-0.2) -- (9.8,0.6) -- (8.2,0.6) -- (8.2,1.4) -- (6.8,1.4) -- cycle;
\end{pgfonlayer}
\end{tikzpicture}
\caption{Image of the diagrams by the morphisms $\barpush$,
$\barres$ and $\barbord$ when $d=1$ and $e\geq 2$. No arrow means
the corresponding generator is mapped to zero.}
\label{specialCases2_fig}%
\end{center}
\end{figure}

\noindent\textit{Case $d=e=1$.} It works as in the case $d=1$,
$e\geq 2$ except that $(\iota_{\!Z})_*(\gen_{1,e-1}(\vide))$ is
replaced by $\genpt_1$ and that
$(\iota_{\!Z})_*(\gen_{1,e}(\rect{1}{1}))$ is replaced by
$\genpt_0$. See Figure~\ref{specialCases3_fig}. Note that, as announced, this case does not use any
induction assumption. This
completes the induction and therefore the proof of Main
Theorem~\ref{main_thm}.\qedhere
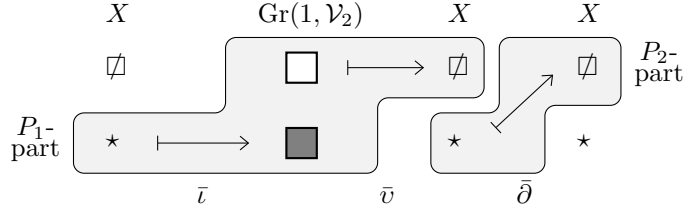
\begin{figure}[!ht]
\begin{center}
\begin{tikzpicture}[y=-1cm]
\pgfarrowsdeclare{biggertip}{biggertip}{
  \setlength{\arrowsize}{.4pt}
  \addtolength{\arrowsize}{.5\pgflinewidth}
  \pgfarrowsrightextend{0}
  \pgfarrowsleftextend{-5\arrowsize}
}{
  \setlength{\arrowsize}{0.4pt}
  \addtolength{\arrowsize}{.3\pgflinewidth}
  \pgfpathmoveto{\pgfpoint{-6\arrowsize}{5\arrowsize}}
  \pgfpathlineto{\pgfpointorigin}
  \pgfpathlineto{\pgfpoint{-6\arrowsize}{-5\arrowsize}}
  \pgfusepathqstroke
}

\path[draw=black,fill=black!0,arrows=|-biggertip] (5.6,6.8) -- (6.6,6.8);
\path[draw=black,fill=black!0,arrows=|-biggertip] (3.1,7.8) -- (4.3,7.8);
\path[draw=black,fill=black!0,arrows=|-biggertip] (7.55,7.6) -- (8.3,6.9);

\path (2.3,6.9) node[text=black,anchor=base west] {$\vide$};
\path (2.3,7.85) node[text=black,anchor=base west] {$\star$};
\path (8.5,6.9) node[text=black,anchor=base west] {$\vide$};
\path (8.5,7.85) node[text=black,anchor=base west] {$\star$};
\path (6.8,6.9) node[text=black,anchor=base west] {$\vide$};
\path (6.8,7.85) node[text=black,anchor=base west] {$\star$};

\path (4.3,6.2) node[text=black,anchor=base west] {$\Grass(1,\cV_2)$};
\path (2.3,6.2) node[text=black,anchor=base west] {$X$};
\path (8.5,6.2) node[text=black,anchor=base west] {$X$};
\path (6.8,6.2) node[text=black,anchor=base west] {$X$};

\draw (1.5,7.6) node (p1) [text=black] {$P_1$-};
\draw (p1)+(0,-2ex) node (p1) [text=black] {part};
\draw (9.7,6.6) node (p2) [text=black] {$P_2$-};
\draw (p2)+(0,-2ex) node (p2) [text=black] {part};

\path (5.9,8.6) node[text=black,anchor=base west] {$\barres$};
\path (7.7,8.6) node[text=black,anchor=base west] {$\barbord$};
\path (3.5,8.6) node[text=black,anchor=base west] {$\barpush$};

\filldraw[fill=white,thick,draw=black] (5.2,6.6) rectangle (4.8,7);
\path[draw=black,thick,fill=black!50] (5.2,7.6) rectangle (4.8,8);

\begin{pgfonlayer}{background}
\draw[fill=black!10,fill opacity=0.5,rounded corners] (2,7.4) -- (4,7.4) -- (4,6.4) -- (7.4,6.4) -- (7.4,7.2) -- (6,7.2) -- (6,8.2) -- (2,8.2) -- cycle;
\draw[fill=black!10,fill opacity=0.5,rounded corners] (6.7,7.4) -- (7.6,7.4) -- (7.6,6.4) -- (9.2,6.4) -- (9.2,7.3) -- (8.2,7.3) -- (8.2,8.2) -- (6.7,8.2) -- cycle;
\end{pgfonlayer}
\end{tikzpicture}
\caption{Image of the diagrams by the morphisms $\barpush$,
$\barres$ and $\barbord$ when $d=e=1$. No arrow means the
corresponding generator is mapped to zero.}
\label{specialCases3_fig}%
\end{center}
\end{figure}
\end{proof}


\section{Corollaries and examples}
\label{final_sec}%


Here is how to deduce a more classical result on Witt groups that
does not involve ``total'' concepts. Recall that $\pi$ is the
structural morphism of $\Grass_X(d,\cV)$ and that $\Taut_d$ is its
tautological bundle. Also recall that the generator
$\gen_{d,e}(\Lambda)$ lives in
$\Wcoh^{|\Lambda|}\big(\Grass_X(d,\cV),L_\Lambda\big)$ where
$|\Lambda|$ is the area of~$\Lambda$. Any line bundle on
$\Grass_X(d,\cV)$ is, up to isomorphism, of the form $\pi^*K \otimes
\det (\Taut_d)^{\otimes l}$, for some line bundle $K$ on $X$ and
some $l\in\bbZ$. A diagram $\Lambda$ is such that
$[L_\Lambda]=[\pi^*K \otimes \det(\Taut_d)^{\otimes l}] \in
\Pic_X(\Grass_X(d,\cV))/2$ if and only if $\twist(\Lambda)=l \in
\bbZ/2$, where $\twist(\Lambda)$ is the half-perimeter
of~$\Lambda$.

\begin{coro}
Let $K$ be a line bundle on $X$ and let $l$ be an integer. For each
even $(d,e)$-diagram $\Lambda$ such that $l=\twist(\Lambda) \in
\bbZ/2$, choose a line bundle $N_\Lambda$ over $\Grass_X(d,\cV)$ and
an isomorphism $N_\Lambda^{\otimes 2}\otimes \pi^*(K \otimes
\det(\cV)^{\otimes -\row(\Lambda)}) \otimes L_\Lambda \isoto
\pi^*(K) \otimes \det(\Taut_d)^l$. Then the morphism of
$\Wcoh^0(X,\cO_X)$-modules
$$
\bigoplus_{\ourfrac{\Lambda\text{ even s.\,t.}}{\twist(\Lambda)=l
\in \bbZ/2}} \hspace{-2ex} \Wcoh^{k-|\Lambda|}\big(X,K\otimes
\det(\cV)^{\otimes -\row(\Lambda)}\big) \isoto
\Wcoh^k\big(\Grass_X(d,\cV),\pi^*(K) \otimes \det(\Taut_d)^{\otimes
l}\big)
$$
sending $(x_\Lambda)$ to $\sum x_\Lambda \cdot \gen_{d,e}(\Lambda)$
(notation of Definition~\ref{lincomb}) is an isomorphism.
\end{coro}

\begin{proof}
Apply Theorem~\ref{thm:classic-basis} to the total basis formed by
the $\gen_{d,e}(\Lambda)$.
\end{proof}

\begin{coro}
\label{ranks_cor}%
Let $d'$ (resp.\ $e'$) be the integral part of $d/2$ (resp.\ $e/2$)
and consider the binomial coefficient
$\binom{a+b}{a}=\frac{(a+b)!}{a!b!}$. The cardinal of a total basis
of the Witt groups of $\Grass_X(d,\cV)$ over $X$ is $2
\binom{d'+e'}{e'}$. If we assume moreover that $\Wcoh^i(X,K)=0$ for
$i\not\equiv 0 \mod 4$ or $[K]\neq [\cO_X] \in \Pic(X)/2$, for
instance for $X$ local (\eg\ a field), then, as a module over
$\Wcoh^0(X,\cO_X)$,
\begin{itemize}
\item the classical Witt group of symmetric forms $\Wcoh^0(\Grass_X(d,\cV),\cO)$ has rank $\binom{d'+e'}{e'}$,
\item the classical Witt group of antisymmetric forms $\Wcoh^2(\Grass_X(d,\cV),\cO)$ is zero.
\item the Witt groups $\Wcoh^1(\Grass_X(d,\cV),L)$ and $\Wcoh^3(\Grass_X(d,\cV),L)$ are zero when $[L]=\Det_d \in \Pic_X(\Grass_X(d,\cV))/2$.
\end{itemize}
\end{coro}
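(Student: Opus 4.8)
The plan is to deduce everything from Theorem~\ref{main_thm} by a purely combinatorial count. By that theorem $\Wtot(\Grass_X(d,\cV))$ is a free graded $\Wtot(X)$-module on the classes $\gen_{d,e}(\Lambda)$, for $\Lambda$ an even $(d,e)$-diagram, so its rank is the number of even $(d,e)$-diagrams. When in addition $\Wtot(X)=\Wcoh^0(X)$, for instance $X$ local (where also $\Pic(X)/2=0$, so $\Pic(\Grass_X(d,\cV))/2=\bbZ/2\cdot\Det_d$ by Corollary~\ref{Pic-Grass_coro}), the ring $\Wtot(X)$ sits in degree $(0,0)$; hence by~\eqref{elements_eq} and Definition~\ref{Twist_def} each $\gen_{d,e}(\Lambda)$ lies in $\Wcoh^{|\Lambda|}(\Grass_X(d,\cV),\twist(\Lambda)\,\Det_d)$, and the rank of $\Wcoh^i(\Grass_X(d,\cV),t\,\Det_d)$ equals the number of even $(d,e)$-diagrams with $|\Lambda|\equiv i\pmod 4$ and $\twist(\Lambda)=t$. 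So I would reduce the whole statement to counting even diagrams by the pair of invariants $(|\Lambda|\bmod 4,\ \twist(\Lambda))$.

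The key point I would establish is the congruence $|\Lambda|\equiv\Lambda_1\cdot\row(\Lambda)\pmod 4$ for every even $(d,e)$-diagram. Writing $\Lambda$ through its $k$-tuples $\multi d,\multi e$ (Definition~\ref{de_def}) and setting $m_i=d_i-d_{i-1}$, $\ell_i=e-e_i$, one has $|\Lambda|=\sum_{i=1}^k m_i\ell_i$, with $\row(\Lambda)=d_k$ or $d_{k-1}$ according as $\ell_k>0$ or $\ell_k=0$. Evenness (conditions (i), (ii), (iv) of Definition~\ref{even_def}) forces $m_2,\dots,m_{k-1}$ even, all the $\ell_i$ of one common parity, and $m_k$ even when $\ell_k>0$; consequently $m_i\ell_i\equiv m_i\ell_1\pmod 4$ for $2\le i\le k-1$ and also for $i=k$ when $\ell_k>0$, while $m_k\ell_k=0$ when $\ell_k=0$. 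Summing and collecting gives $|\Lambda|\equiv\ell_1\cdot\row(\Lambda)=\Lambda_1\cdot\row(\Lambda)\pmod 4$ (condition (iii) is not needed). Since $\twist(\Lambda)=[\Lambda_1+\row(\Lambda)]$, the three vanishings are then immediate: if $\twist(\Lambda)=1$ then $\Lambda_1$ and $\row(\Lambda)$ have opposite parity, so $\Lambda_1\row(\Lambda)$, hence $|\Lambda|$, is even, and no even diagram contributes to $\Wcoh^1(\Grass,\Det_d)$ or $\Wcoh^3(\Grass,\Det_d)$; if $\twist(\Lambda)=0$ then $\Lambda_1\equiv\row(\Lambda)\pmod 2$, so $\Lambda_1\row(\Lambda)$ is odd or $\equiv 0\pmod 4$, never $\equiv 2$, and nothing contributes to $\Wcoh^2(\Grass,0)$.

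The same congruence shows that the generators in $\Wcoh^0(\Grass,0)$ are exactly the $\gen_{d,e}(\Lambda)$ with $\Lambda_1$ and $\row(\Lambda)$ both even. I would count these by a dilation bijection: such a $\Lambda$ has every boundary segment of even length (the inner ones by evenness, the top and left edges by hypothesis, and the remaining frame-touching runs, namely the last vertical run and the bottom horizontal run of length $\ell_k\equiv\Lambda_1$, by the parity relations among the $m_i$ forced by evenness), hence it is the image under dilation by $2$ of a unique ordinary Young diagram contained in the $(d'\!\times\!e')$-box, and conversely every such diagram dilates to an even $(d,e)$-diagram of this type (the empty diagram matching the empty diagram). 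Therefore $\Wcoh^0(\Grass,0)$ has rank equal to the number of Young diagrams in the $(d'\!\times\!e')$-box, which is $\binom{d'+e'}{e'}$.

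Finally, for the total rank I would run an induction on $d+e$. The three bijections of Proposition~\ref{exact-Lambda_prop} give, writing $N(a,b)$ for the number of even $(a,b)$-diagrams and $O(a,b)$ for the number of those with odd last row (equivalently odd first row, since all row lengths share a parity), the recursion $N(d,e)=N(d,e-1)+N(d-1,e)-2\,O(d-1,e)$. A direct count of $O(a,b)$, stratifying by $k$, substituting $\ell_i=2\lambda_i-1$ and $m_i=2\mu_i$ wherever evenness forces evenness, and evaluating the resulting double sum of products of binomials by Vandermonde's identity, yields $O(a,b)=0$ when $a$ is odd and $b$ even, $O(a,b)=\binom{a'+b'-1}{a'}$ when $a,b$ are both even, and $O(a,b)=\binom{a'+b'}{a'}$ when $b$ is odd; feeding these into the recursion and separating the four parities of $(d,e)$, the identity $N(d,e)=2\binom{d'+e'}{e'}$ drops out (Pascal's rule handling the case $d,e$ both even), the base case $d=1$ or $e=1$ being trivial since then the only even diagrams are $\varnothing$ and $\rect{d}{e}$. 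The hard part will be precisely this last paragraph: the dilation bijection and, above all, the evaluation of $O(a,b)$ together with the check that the recursion reproduces $2\binom{d'+e'}{e'}$ through the floor functions $d'=[d/2]$, $e'=[e/2]$ in every parity case, which is elementary but delicate bookkeeping in which the parities of $d$ and $e$ interact with those of the diagram's segments.
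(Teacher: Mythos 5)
Your proof is correct, but it takes a genuinely different combinatorial route from the paper's. The paper partitions the even Young diagrams in a $d\times e$ frame into four exclusive shape types (unions of $2\times2$ blocks; such a union plus one ``single row''; plus one ``single column''; plus both), reads off the shift and twist of each type directly, and counts each type by a bijection with ordinary Young diagrams in a smaller frame --- a single short paragraph. You instead isolate a clean arithmetic fact, the congruence $|\Lambda|\equiv\Lambda_1\cdot\row(\Lambda)\pmod 4$ for even $\Lambda$, which together with $\twist(\Lambda)=[\Lambda_1+\row(\Lambda)]$ yields the three vanishing statements at once; you then identify the $(0,0)$-degree generators with Young diagrams in the $d'\times e'$ frame via a dilation-by-$2$ bijection; and you get the total rank $2\binom{d'+e'}{e'}$ by an induction on $d+e$ using the recursion $N(d,e)=N(d,e-1)+N(d-1,e)-2O(d-1,e)$ extracted from Proposition~\ref{exact-Lambda_prop}, together with closed formulas for $O(a,b)$. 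I checked your congruence (in all three cases $\ell_k>0$, $\ell_k=0$, $k=1$), the dilation bijection (including the mild care needed when trailing zero rows and the parity of $d$ do not match up), your stated values of $O(a,b)$ on small examples, and the four parity cases of the recursion; everything works, with the cancellations you predict (Pascal in the even-even case, $\binom{n}{k}=\binom{n}{n-k}$ in the mixed and odd-odd cases). Your route is longer and leans on the $O(a,b)$ evaluation which you only sketch, but it is a complete and more systematic argument: the congruence makes the degree computation transparent, whereas the paper's type decomposition does the bookkeeping implicitly and tersely. Two small expository points: when you invoke ``(iv) of Definition~\ref{even_def}'' to get $m_k$ even when $\ell_k>0$, the precise hypothesis is $0<\ell_k<e$; the case $\ell_k=e$ only occurs for $k=1$ and is harmless, but worth saying. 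Likewise, the statement ``the last vertical run and the bottom horizontal run are even by the parity relations among the $m_i$'' needs both the evenness conditions and the standing hypothesis that $\Lambda_1$ and $\row(\Lambda)$ are even; spelling this out would make the dilation bijection airtight.
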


\begin{proof}
Let ``$4$-block'' mean ``$2\times 2$ square''. Every even Young
diagram $\Lambda$ is either
\begin{enumerate}[{\hspace{2ex}}(a)]
\item \label{blocks}%
a union of $4$-blocks and $\gen(\Lambda)$ counts in
$\Wcoh^0(\Grass_X(d,\cV),\cO)$,
\item \label{row}%
a single row plus $4$-blocks ($e$ even) and $\gen(\Lambda)$ counts
in $\Wcoh^e(\Grass_X(d,\cV),L_\Lambda)$,
\item \label{col}%
a single column plus $4$-blocks ($d$ even) and $\gen(\Lambda)$
counts in $\Wcoh^d(\Grass_X(d,\cV),L_\Lambda)$,
\item \label{col-row}%
a single row and a single column plus $4$-blocks ($d$ and $e$ odd)
and $\gen(\Lambda)$ counts in $\Wcoh^{d+e-1}(\Grass_X(d,\cV),\cO)$.
\end{enumerate}
All possibilities (a)-(d) are exclusive and can be enumerated easily
by counting the diagrams of $4$-blocks, which amounts to counting
the usual Young diagrams in $(d'\!\times\!e')$-frame. We get
$\binom{d'+e'}{e'}$ elements in case~(a), and the other results
depend on the parities of $d$ and $e$ but are also very easy to
figure out in each case.
\end{proof}

\begin{coro} \label{bord-non-zero_coro}%
The connecting homomorphism $\bord$ is zero for all line bundles $L$
(and thus the long exact sequence~\eqref{LES_eq} decomposes as split
short exact sequences as for Chow groups) if and only if both $d$
and $e$ are even.
\end{coro}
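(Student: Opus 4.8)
The plan is to reduce the statement, via Theorem~\ref{main_thm}, to a combinatorial question about even Young diagrams, and then to settle that question by a parity bookkeeping.

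Since Theorem~\ref{main_thm} is already available, the vertical maps $\iso_{d-1,e}$ and $\iso_{d,e-1}$ in Diagram~\eqref{compare_eq} are isomorphisms, and Proposition~\ref{push-pull-bord_prop}(c) gives $\bord\circ\iso_{d-1,e}=\iso_{d,e-1}\circ\bar\bord$. Hence $\bord=0$ if and only if the combinatorial homomorphism $\bar\bord\colon\Free_X(d-1,e)\too\Free_X(d,e-1)$ of Corollary~\ref{exact-Lambda_cor} vanishes. By formula~\eqref{bar-bord_eq}, $\bar\bord$ sends an even $(d-1,e)$-diagram $\Lambda''$ with $\Lambda''_{d-1}$ odd to $(\Lambda''_1-1,\ldots,\Lambda''_{d-1}-1,0)$, which is a genuine basis element of $\Free_X(d,e-1)$ by Proposition~\ref{exact-Lambda_prop}(c), and it kills every other basis element. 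Therefore $\bar\bord=0$ if and only if there is no even $(d-1,e)$-diagram whose last row $\Lambda''_{d-1}$ is odd, and the theorem comes down to proving that such a diagram exists precisely when $d$ or $e$ is odd.

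For the existence half I would just exhibit one. If $e$ is odd, the full rectangle $\rect{d-1}{e}$ is an even diagram ($k=1$ with $e_1=0$, so Definition~\ref{even_def} imposes nothing) and its last row is $e$. If $d$ is odd and $e\geqslant2$, the single column $(1,\ldots,1)\in\bbN^{d-1}$ is even (conditions (iii) and (iv) of Definition~\ref{even_def} hold because $d-1$ is even, the others being vacuous for $k=1$) and its last row is $1$. For the converse I would assume $d$ and $e$ both even, let $\Lambda''$ be an even $(d-1,e)$-diagram with associated $k$-tuples $(\multi{d},\multi{e})$ as in Definition~\ref{de_def}, so that $d_k=d-1$ and $\Lambda''_{d-1}=e-e_k$, and argue that $\Lambda''_{d-1}$ is even. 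If $\Lambda''_{d-1}\in\{0,e\}$ this holds because $e$ is even; otherwise $0<e_k<e$, which forces $k\geqslant2$ (else Definition~\ref{even_def}(iv) would already fail since $d-1$ is odd), so Definition~\ref{even_def}(iv) combined with~(i) gives $d_1\equiv d_k=d-1\equiv1\pmod2$; then Definition~\ref{even_def}(iii) excludes $0<e_1<e$, and as the diagram is nonempty we get $e_1=0$; finally Definition~\ref{even_def}(ii) propagates parity along the $e_i$ to yield $e_k$ even, whence $\Lambda''_{d-1}=e-e_k$ is even. (The same conclusion can instead be read off the classification of even diagrams recorded in the proof of Corollary~\ref{ranks_cor}.) The degenerate cases $d=1$ or $e=1$ are handled by Proposition~\ref{Pn_prop}, where $d$ or $e$ is odd and indeed $\bord\neq0$, consistently with the statement.

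Finally, when $\bord=0$ the cyclic exact sequence~\eqref{LES-tot_eq} collapses to the short exact sequence $0\to\Wtot(\Grass_X(d,\cV^1))\too\Wtot(\Grass_X(d,\cV))\too\Wtot(\Grass_X(d-1,\cV^1))\to0$, which splits because its right-hand term is a free $\Wtot(X)$-module by Theorem~\ref{main_thm}; this gives the parenthetical assertion comparing the situation with Chow groups. The only step that is not purely formal is the parity bookkeeping in the converse direction of the combinatorial statement, and I expect that to be the main (though still elementary) obstacle.
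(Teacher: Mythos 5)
Your proof is correct and follows essentially the same strategy as the paper's: reduce $\bord=0$ to the combinatorial vanishing of $\bar\bord$ (hence to the non-existence of an even $(d-1,e)$-diagram with odd last row), then settle that by parity bookkeeping through conditions (i)--(iv) of Definition~\ref{even_def}, using the full rectangle and the single column as witnesses in the existence direction. The only cosmetic difference is that you prove the converse directly (assuming $d,e$ even, showing $\Lambda''_{d-1}$ must be even) whereas the paper argues by contrapositive (assuming $e$ even and $\Lambda''_{d-1}$ odd, deducing $d$ odd); you also spell out the reduction $\bord=0\Leftrightarrow\bar\bord=0$ and the split assertion more explicitly than the paper does.
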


\begin{proof}
Looking back at the proof of the main theorem, and
at~\eqref{bar-bord_eq} (or Figure~\ref{bord_fig}), we see that
$\bord$ is zero if and only if there is no even $(d-1,e)$-diagram
$\Lambda''$ such that $\Lambda''_{d-1}$ is odd. This implies that
$e$ is even (otherwise $\Lambda''=\rect{(d-1)}{e}$ would be such an even
diagram) and that $d-1$ is odd (otherwise $\Lambda''=(1,\ldots,1)$
would be such an even diagram). Conversely, assume $e$ even and the
existence of an even $(d-1,e)$-diagram $\Lambda''$ such that
$\Lambda''_{d-1}$ is odd. Then $e_k$ is odd (since
$e=\Lambda''_{d-1}+e_k$ is even), hence all $e_i$ are odd since
$\Lambda''$ is an even diagram. In particular, $e_1$ is odd, hence
$e_1>0$ and therefore $d_1$ is even. This implies that
$d-1=d_k=(d_k-d_{k-1})+\ldots+(d_2-d_1)+d_1$ is even, \ie $d$ is
odd, as was to be shown.
\end{proof}

\begin{nota}
\label{Gal_not}%
For $d,e\geqslant1$, we write $\Gal_X(d,e)$ for the split
Grassmannian
$$
\Gal_X(d,e)=\Grass_X(d,\cO_X^{d+e})\,.
$$
\end{nota}

\begin{exam}
Figure~\ref{G33_fig} shows how the different generators are mapped
to each other, up to lax-similitude, by $\iota_*$, $\upsilon^*$ and
$\bord$ in the long exact sequence~\eqref{LES_eq} for~$\Gal(3,3)$.
We use d\'evissage (resp.\ homotopy invariance) to identify the
generators of the Witt groups of $\Gal(3,2)$ (resp.\ of $\Gal(2,3)$)
with generators of the Witt groups of $\Gal(3,3)$ with support on
$\Gal(3,2)$ (resp.\ of the open complement of $\Gal(3,2)$).
\end{exam}

\begin{figure}[!ht]
\begin{center}
\begin{tikzpicture}[y=-1cm]
\pgfarrowsdeclare{biggertip}{biggertip}{
  \setlength{\arrowsize}{.4pt}
  \addtolength{\arrowsize}{.5\pgflinewidth}
  \pgfarrowsrightextend{0}
  \pgfarrowsleftextend{-5\arrowsize}
}{
  \setlength{\arrowsize}{0.4pt}
  \addtolength{\arrowsize}{.3\pgflinewidth}
  \pgfpathmoveto{\pgfpoint{-6\arrowsize}{5\arrowsize}}
  \pgfpathlineto{\pgfpointorigin}
  \pgfpathlineto{\pgfpoint{-6\arrowsize}{-5\arrowsize}}
  \pgfusepathqstroke
}

\path[draw=black,fill=black!0,arrows=|-biggertip] (3,2.4) -- (3.6,3.8);
\path[draw=black,fill=black!0,arrows=|-biggertip] (3,5.2) -- (3.6,5.2);
\path[draw=black,fill=black!0,arrows=|-biggertip] (5.2,2.4) -- (5.8,3.8);
\path[draw=black,fill=black!0,arrows=|-biggertip] (5.2,0.8) -- (5.8,0.8);
\path[draw=black,fill=black!0,arrows=|-biggertip] (7.4,5) -- (8,3.8);
\path[draw=black,fill=black!0,arrows=|-biggertip] (7.4,2.2) -- (8,1);
\path (5.2,6.2) node[text=black,anchor=base west] {$\barres$};
\path (3,6.2) node[text=black,anchor=base west] {$\barpush$};
\path (7.4,6.2) node[text=black,anchor=base west] {$\barbord$};
\path (1.8,0.2) node[text=black,anchor=base west] {$\Gal(3,2)$};
\path (3.8,0.2) node[text=black,anchor=base west] {$\Gal(3,3)$};
\path (6,0.2) node[text=black,anchor=base west] {$\Gal(2,3)$};
\path (8,0.2) node[text=black,anchor=base west] {$\Gal(3,2)$};

\draw[thick,black] (3.8,0.4) rectangle (5,1.6);
\draw[thick,black] (3.8,3.2) rectangle (5,4.4);
\draw[thick,black] (3.8,4.6) rectangle (5,5.8);
\draw[thick,black] (6,0.4) rectangle (7.2,1.2);
\draw[thick,black] (6,4.6) rectangle (7.2,5.4);
\path[draw=black,thick,fill=black!50] (6,1.8) -- (6,2.6) -- (6.4,2.6) -- (6.4,1.8) -- cycle;
\path[draw=black,thick,fill=black!50] (6,3.2) -- (6,4) -- (6.8,4) -- (6.8,3.2) -- cycle;
\path[draw=black,thick,fill=black!50] (6,4.6) -- (6,5.4) -- (7.2,5.4) -- (7.2,4.6) -- cycle;
\path[draw=black,thick,fill=black!50] (3.8,4.6) -- (3.8,5.8) -- (5,5.8) -- (5,4.6) -- cycle;
\path[draw=black,thick,fill=black!50] (3.8,1.8) -- (3.8,2.6) -- (4.6,2.6) -- (4.6,1.8) -- cycle;
\draw[thick,black] (3.8,1.8) rectangle (5,3);
\draw[thick,black] (6,1.8) rectangle (7.2,2.6);
\draw[thick,black] (6,3.2) rectangle (7.2,4);
\draw[thick,black] (2,0.4) rectangle (2.8,1.6);
\draw[thick,black] (2,1.8) rectangle (2.8,3);
\draw[thick,black] (2,3.2) rectangle (2.8,4.4);
\draw[thick,black] (2,4.6) rectangle (2.8,5.8);
\path[draw=black,thick,fill=black!50] (2,1.8) -- (2,2.2) -- (2.8,2.2) -- (2.8,1.8) -- cycle;
\path[draw=black,thick,fill=black!50] (2,3.2) -- (2,4) -- (2.8,4) -- (2.8,3.2) -- cycle;
\path[draw=black,thick,fill=black!50] (2,4.6) -- (2,5.8) -- (2.8,5.8) -- (2.8,4.6) -- cycle;
\draw[thick,black] (8.2,0.4) rectangle (9,1.6);
\draw[thick,black] (8.2,1.8) rectangle (9,3);
\draw[thick,black] (8.2,3.2) rectangle (9,4.4);
\draw[thick,black] (8.2,4.6) rectangle (9,5.8);
\path[draw=black,thick,fill=black!50] (8.2,1.8) -- (8.2,2.2) -- (9,2.2) -- (9,1.8) -- cycle;
\path[draw=black,thick,fill=black!50] (8.2,3.2) -- (8.2,4) -- (9,4) -- (9,3.2) -- cycle;
\path[draw=black,thick,fill=black!50] (8.2,4.6) -- (8.2,5.8) -- (9,5.8) -- (9,4.6) -- cycle;
\path[draw=black,thick,fill=black!50] (3.8,3.2) -- (3.8,4.4) -- (4.2,4.4) -- (4.2,3.6) -- (5,3.6) -- (5,3.2) -- cycle;

\end{tikzpicture}%
\end{center}
\caption{Maps $\barpush$, $\barres$ and $\barbord$ on diagrams,
corresponding to the maps $\iota_*$, $\upsilon^*$ and $\bord$ on
generators (no arrow means mapped to zero).}
\label{G33_fig}%
\end{figure}

\begin{exam}
Figures~\ref{G44_fig},~\ref{G45_fig} and~\ref{G55_fig} give the even
Young diagrams in \deframe\ and the corresponding shifts in $\bbZ/4$
and twists $\Det_d$ or $\cO$ for the Grassmannians $\Gal(4,4)$,
$\Gal(4,5)$ and $\Gal(5,5)$.
\end{exam}

\begin{figure}[!ht]
\begin{center}
\begin{tikzpicture}[y=-1cm]

\draw[thick,black] (0.635,0.635) rectangle (1.905,1.905);
\draw[thick,black] (2.2225,0.635) rectangle (3.4925,1.905);
\draw[thick,black] (3.81,0.635) rectangle (5.08,1.905);
\draw[thick,black] (5.3975,0.635) rectangle (6.6675,1.905);
\draw[thick,black] (6.985,0.635) rectangle (8.255,1.905);
\path[draw=black,thick,fill=black!50] (8.5725,0.635) rectangle (9.8425,1.905);
\draw[thick,black] (0.635,2.2225) rectangle (1.905,3.4925);
\draw[thick,black] (2.2225,2.2225) rectangle (3.4925,3.4925);
\draw[thick,black] (3.81,2.2225) rectangle (5.08,3.4925);
\draw[thick,black] (5.3975,2.2225) rectangle (6.6675,3.4925);
\draw[thick,black] (6.985,2.2225) rectangle (8.255,3.4925);
\draw[thick,black] (8.5725,2.2225) rectangle (9.8425,3.4925);
\path[draw=black,thick,fill=black!50] (2.2225,0.635) -- (2.8575,0.635) -- (2.8575,1.27) -- (2.2225,1.27) -- cycle;
\path[draw=black,thick,fill=black!50] (3.81,1.27) -- (5.08,1.27) -- (5.08,0.635) -- (3.81,0.635) -- cycle;
\path[draw=black,thick,fill=black!50] (5.3975,0.635) -- (5.3975,1.905) -- (6.0325,1.905) -- (6.0325,0.635) -- cycle;
\path[draw=black,thick,fill=black!50] (6.985,0.635) -- (6.985,1.905) -- (7.62,1.905) -- (7.62,1.27) -- (8.255,1.27) -- (8.255,0.635) -- cycle;
\path[draw=black,thick,fill=black!50] (0.635,2.54) -- (1.905,2.54) -- (1.905,2.2225) -- (0.635,2.2225) -- cycle;
\path[draw=black,thick,fill=black!50] (2.2225,2.2225) -- (2.2225,3.175) -- (2.8575,3.175) -- (2.8575,2.54) -- (3.4925,2.54) -- (3.4925,2.2225) -- cycle;
\path[draw=black,thick,fill=black!50] (3.81,2.2225) -- (3.81,3.175) -- (5.08,3.175) -- (5.08,2.2225) -- cycle;
\path[draw=black,thick,fill=black!50] (5.3975,2.2225) -- (5.3975,3.4925) -- (5.715,3.4925) -- (5.715,2.2225) -- cycle;
\path[draw=black,thick,fill=black!50] (6.985,2.2225) -- (6.985,3.4925) -- (7.3025,3.4925) -- (7.3025,2.8575) -- (7.9375,2.8575) -- (7.9375,2.2225) -- cycle;
\path[draw=black,thick,fill=black!50] (8.5725,2.2225) -- (8.5725,3.4925) -- (9.525,3.4925) -- (9.525,2.2225) -- cycle;
\end{tikzpicture}
\end{center}
\caption{Diagrams of generators for $\Gal(4,4)$: first row in shift
$0$ and twist $\cO$, second row in shift $0$ and twist~$\Det_d$.}
\label{G44_fig}%
\end{figure}

\begin{figure}[!ht]
\begin{center}
\begin{tikzpicture}[y=-1cm]

\path[draw=black,thick,fill=black!50] (2.2225,0.3175) -- (2.8575,0.3175) -- (2.8575,0.9525) -- (2.2225,0.9525) -- cycle;
\path[draw=black,thick,fill=black!50] (4.1275,0.9525) -- (5.3975,0.9525) -- (5.3975,0.3175) -- (4.1275,0.3175) -- cycle;
\path[draw=black,thick,fill=black!50] (6.0325,0.3175) -- (6.0325,1.5875) -- (6.6675,1.5875) -- (6.6675,0.3175) -- cycle;
\path[draw=black,thick,fill=black!50] (7.9375,0.3175) -- (7.9375,1.5875) -- (8.5725,1.5875) -- (8.5725,0.9525) -- (9.2075,0.9525) -- (9.2075,0.3175) -- cycle;
\path[draw=black,thick,fill=black!50] (7.9375,1.905) -- (7.9375,3.175) -- (8.89,3.175) -- (8.89,2.54) -- (9.525,2.54) -- (9.525,1.905) -- cycle;
\draw[thick,black] (0.3175,0.3175) rectangle (1.905,1.5875);
\draw[thick,black] (2.2225,0.3175) rectangle (3.81,1.5875);
\draw[thick,black] (4.1275,0.3175) rectangle (5.715,1.5875);
\draw[thick,black] (6.0325,0.3175) rectangle (7.62,1.5875);
\draw[thick,black] (7.9375,0.3175) rectangle (9.525,1.5875);
\path[draw=black,thick,fill=black!50] (0.635,3.175) -- (0.635,1.905) -- (0.3175,1.905) -- (0.3175,3.175) -- cycle;
\draw[thick,black] (0.3175,1.905) rectangle (1.905,3.175);
\path[draw=black,thick,fill=black!50] (2.2225,1.905) -- (3.175,1.905) -- (3.175,2.54) -- (2.54,2.54) -- (2.54,3.175) -- (2.2225,3.175) -- cycle;
\draw[thick,black] (2.2225,1.905) rectangle (3.81,3.175);
\draw[thick,black] (7.9375,1.905) rectangle (9.525,3.175);
\path[draw=black,thick,fill=black!50] (9.8425,0.3175) -- (9.8425,1.5875) -- (11.1125,1.5875) -- (11.1125,0.3175) -- cycle;
\draw[thick,black] (9.8425,0.3175) rectangle (11.43,1.5875);
\path[draw=black,thick,fill=black!50] (9.8425,1.905) rectangle (11.43,3.175);
\path[draw=black,thick,fill=black!50] (4.1275,1.905) -- (5.715,1.905) -- (5.715,2.54) -- (4.445,2.54) -- (4.445,3.175) -- (4.1275,3.175) -- cycle;
\draw[thick,black] (4.1275,1.905) rectangle (5.715,3.175);
\draw[thick,black] (6.0325,1.905) rectangle (7.62,3.175);
\path[draw=black,thick,fill=black!50] (6.0325,3.175) -- (6.985,3.175) -- (6.985,1.905) -- (6.0325,1.905) -- cycle;
\end{tikzpicture}
\end{center}
\caption{Diagrams of generators for $\Gal(4,5)$: first row in shift
$0$ and twist $\cO$, second row in shift $0$ and twist~$\Det_d$.}
\label{G45_fig}%
\end{figure}

\begin{figure}[!ht]
\begin{center}
\begin{tikzpicture}[y=-1cm]

\path[draw=black,thick,fill=black!50] (2.2225,0.3175) -- (2.8575,0.3175) -- (2.8575,0.9525) -- (2.2225,0.9525) -- cycle;
\path[draw=black,thick,fill=black!50] (4.1275,0.9525) -- (5.3975,0.9525) -- (5.3975,0.3175) -- (4.1275,0.3175) -- cycle;
\path[draw=black,thick,fill=black!50] (6.0325,0.3175) -- (6.0325,1.5875) -- (6.6675,1.5875) -- (6.6675,0.3175) -- cycle;
\path[draw=black,thick,fill=black!50] (7.9375,0.3175) -- (7.9375,1.5875) -- (8.5725,1.5875) -- (8.5725,0.9525) -- (9.2075,0.9525) -- (9.2075,0.3175) -- cycle;
\path[draw=black,thick,fill=black!50] (9.8425,0.3175) -- (9.8425,1.5875) -- (11.1125,1.5875) -- (11.1125,0.3175) -- cycle;
\draw[thick,black] (0.3175,0.3175) rectangle (1.905,1.905);
\draw[thick,black] (2.2225,0.3175) rectangle (3.81,1.905);
\draw[thick,black] (4.1275,0.3175) rectangle (5.715,1.905);
\draw[thick,black] (6.0325,0.3175) rectangle (7.62,1.905);
\draw[thick,black] (7.9375,0.3175) rectangle (9.525,1.905);
\draw[thick,black] (9.8425,0.3175) rectangle (11.43,1.905);
\draw[thick,black] (2.2225,2.2225) rectangle (3.81,3.81);
\draw[thick,black] (4.1275,2.2225) rectangle (5.715,3.81);
\draw[thick,black] (6.0325,2.2225) rectangle (7.62,3.81);
\draw[thick,black] (7.9375,2.2225) rectangle (9.525,3.81);
\path[draw=black,thick,fill=black!50] (9.8425,2.2225) rectangle (11.43,3.81);
\path[draw=black,thick,fill=black!50] (7.9375,2.2225) -- (7.9375,3.81) -- (8.89,3.81) -- (8.89,3.175) -- (9.525,3.175) -- (9.525,2.2225) -- cycle;
\path[draw=black,thick,fill=black!50] (4.1275,2.2225) -- (5.715,2.2225) -- (5.715,3.175) -- (4.445,3.175) -- (4.445,3.81) -- (4.1275,3.81) -- cycle;
\path[draw=black,thick,fill=black!50] (0.635,3.81) -- (0.635,2.54) -- (1.905,2.54) -- (1.905,2.2225) -- (0.3175,2.2225) -- (0.3175,3.81) -- cycle;
\draw[thick,black] (0.3175,2.2225) rectangle (1.905,3.81);
\path[draw=black,thick,fill=black!50] (2.2225,2.2225) -- (3.81,2.2225) -- (3.81,2.54) -- (3.175,2.54) -- (3.175,3.175) -- (2.54,3.175) -- (2.54,3.81) -- (2.2225,3.81) -- cycle;
\path[draw=black,thick,fill=black!50] (6.0325,3.81) -- (6.985,3.81) -- (6.985,2.54) -- (7.62,2.54) -- (7.62,2.2225) -- (6.0325,2.2225) -- cycle;
\end{tikzpicture}
\end{center}
\caption{Diagrams of generators for $\Gal(5,5)$: first row in shift
$0$ and twist $\cO$, second row in shift $1$ and twist~$\Det_d$.}
\label{G55_fig}%
\end{figure}
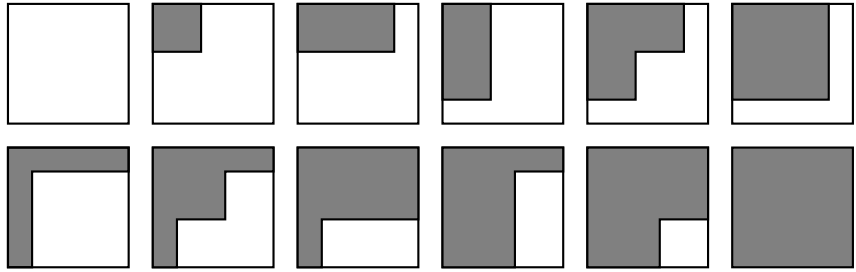

We conclude with a few comments.

\begin{rema}
As mentioned in Remark~\ref{ind_rem}, we could have considered a
larger set of elements $\gen_{d,e}(\Lambda)$ using
Remark~\ref{cond-push_rem} instead of assuming $\Lambda$ even. This
larger set is also stable by applying $\iota_*$, $\upsilon^*$ and
$\bord$. Some of these extra elements are then easily seen to be
zero from the exact sequence, but some of them are nonzero.
\end{rema}

\begin{rema}
Part of the multiplicative structure on Witt groups can be computed
at each induction step using the projection formula. Unfortunately,
this is not enough for the whole computation. Despite the results
for the Grothendieck and the Chow rings using basis of Schubert
cells, it is unclear to the authors what kind of
Littlewood-Richardson type rule one should expect.

Note however that all generators $\gen_{d,e}(\Lambda)$ are nilpotent
except for $\Lambda=\vide$. This can be checked using homotopy
invariance and a discussion on the supports or simply the fact that
these Witt classes are generically trivial.
\end{rema}

\begin{rema}
Although we do not need it here, it is possible to show that, for
$\cV=\cO_X^{d+e}$, the isomorphism $\Gal(d,e)=\Grass(d,\cV)\simeq
\Grass(e,\cV^\vee)=\Gal(e,d)$ sends $\gen_{d,e}(\Lambda)$ to
$\gen_{e,d}(\Lambda^\vee)$, up to \lax-similitude of course, where
$\Lambda^\vee$ is the dual partition.
\end{rema}

\bibliography{WittGrassmann}

\end{document}